\definecolor{labelkey}{rgb}{0,0,1}
\DeclareMathOperator*{\esssup}{ess\,sup}
\DeclareMathOperator*{\essinf}{ess\,inf}
\def\div{{\rm div}}
\def\R {\mathbb{R}}
\def\div{\mathrm{div}\,}
\newtheorem{proposition}{Proposition}[section]
\newtheorem{theorem}[proposition]{Theorem}
\newtheorem{corollary}[proposition]{Corollary}
\newtheorem{lemma}[proposition]{Lemma}
\theoremstyle{definition}
\newtheorem{definition}[proposition]{Definition}
\newtheorem{remark}[proposition]{Remark}
\numberwithin{equation}{section}
\title{On renormalized solutions to elliptic inclusions with nonstandard growth}
\author{Anna Denkowska}
\address{Cracow University of Economics, Department of Mathematics, Rakowicka 27,
31-510 Krak\'{o}w, Poland}
\email{anna.denkowska@uek.krakow.pl}
\author{Piotr Gwiazda}
\address{Institute of Mathematics, Polish Academy of Sciences, ul. \'Sniadeckich 8, 00-656 Warsaw, Poland}
\email{pgwiazda@mimuw.edu.pl}
\author{Piotr Kalita}
\address{Faculty of Mathematics and Computer Science, Jagiellonian University, ul. \L{}ojasiewicza 6, 30-348 Krak\'{o}w, Poland}
\email{piotr.kalita@ii.uj.edu.pl}
\begin{document}

	\begin{abstract}
		We study the elliptic inclusion given in the following divergence form 
		 \begin{align*}
		 & -\div A(x,\nabla u) \ni f\quad \mathrm{in}\quad \Omega,\\
		 & u=0\quad \mathrm{on}\quad \partial \Omega.
		 \end{align*}
		As we assume that $f\in L^1(\Omega)$, the solutions to the above problem are understood in the renormalized sense. We also assume nonstandard, possibly nonpolynomial, heterogeneous and anisotropic growth and coercivity conditions on the maximally monotone multifunction $A$ which necessitates the use of the nonseparable and nonreflexive Musielak--Orlicz spaces. We prove the existence and uniqueness of the renormalized solution as well as, under additional assumptions on the problem data, its boundedness. The key difficulty, the lack of a Carath\'{e}odory selection of the maximally monotone multifunction is overcome with the use of the Minty transform.
		
	\end{abstract}
	\thanks{Work of PG has been supported by the National Science Center of the Republic of Poland by the project no UMO-2018/31/B/ST1/02289. Work of PK has been supported by the National Science Center of the Republic of Poland by the project UMO-2016/22/A/ST1/00077.}
	
	\thanks{We thank Iwona Chlebicka for discussion and useful remarks. We also thank anonymous reviewers for valuable comments.}
\maketitle

\tableofcontents

\section{Introduction.}
Let $\Omega\subset \R^d$ be an open and bounded domain with sufficiently smooth boundary and let $f:\Omega \to \R$ and $A:\Omega\times \R^d\to 2^{\R^d}$ be a maximally monotone multifunction. We study the existence and uniqueness of solutions to a problem governed by a quasilinear elliptic inclusion
\begin{align}
& -\div A(x,\nabla u) \ni f\quad \mathrm{in}\quad \Omega,\label{prblm_1}\\
& u=0\quad \mathrm{on}\quad \partial \Omega.\label{prblm_2}
\end{align}
We assume that $f\in L^1(\Omega)$ and hence we need to employ the machinery of \textit{renormalized} solutions.

The existence of renormalized solutions for problems with nonstandard growth has already been established for elliptic equations in \cite{gwiazda_wittbold,gwiazda_corrigendum,gwiazda_skrzypczak} (and generalized in \cite{swierczewska1, swierczewska2, gwiazda_wittbold2, chlebicka_parabolic, chlebicka_parabolic2} to the case of evolutionary, parabolic, problems) but, to our surprise, it appears that no results on the existence of renormalized solutions for the differential inclusions with a multivalued leading term have been obtained so far, even with the standard polynomial growth, i.e. in classical Sobolev spaces (note, however, that in \cite{gwiazda_wittbold} there appears the lower order term which can be multivalued). This paper fills this gap.  
 
As we assume the \textit{nonstandard growth condition} on $A$,  our solution belongs to nonreflexive and nonseparable Musielak--Orlicz spaces. To deal with the difficulties associated with the lack of reflexivity and separability we use the results of \cite{kalousek, gwiazda_skrzypczak, gwiazda_wittbold,
gwiazda_corrigendum}. The existence result we present is in fact a generalization of the results of \cite{gwiazda_skrzypczak} and \cite{gwiazda_wittbold,
gwiazda_corrigendum} to the situation when the single valued mapping  $A$ present there becomes a multivalued map. 

Classically, in the framework of elliptic problems given in divergence form such as 
\begin{align}
& -\div A(x,\nabla u) = f\quad \mathrm{in}\quad \Omega,\label{prblm_1_1}\\
& u=0\quad \mathrm{on}\quad \partial \Omega,\label{prblm_2_1}
\end{align}
one needs to assume that the function $A:\Omega \times \R^d \to \R^d$ satisfies the \textit{coercivity} condition which could be possibly given as 
$$
A(x,\xi) \cdot \xi \geq C |\xi|^p - m(x)\quad \textrm{for a.e}\ x\in \Omega\ \textrm{and every}\ \ \xi\in \R^d, 
$$
for some $p>1$ and a given function $m\in L^1(\Omega)$ and the \textit{growth} condition having the form 
$$
|A(x,\xi)| \leq C|\xi|^{p-1} + n(x)\quad \textrm{for a.e}\ x\in \Omega\ \textrm{and every}\ \ \xi\in \R^d, 
$$
with a given function $n\in L^{p'}(\Omega)$. The framework of Musielak--Orlicz spaces replaces the growth of order $p$ with the arbitrary, not necessarily polynomial one. This growth is described by the so called $N$-function $M:\Omega\times \R^d\to \R$. In the simplest situation of polynomial growth of order $p$ there simply holds $M(x,\xi) = |\xi|^p$. We deal with the general situation allowing for the dependence of $M$ on $x$ (\textit{heterogeneity}), on the full vector $\xi$ rather than only its norm (\textit{anisotropy}), and taking into account that the dependence on $\xi$ can be possibly nonpolynomial (\textit{nonstandard growth}). Numerous examples of problems governed by the divergence type operators with such nonstandard growth have been recently presented in the review article  \cite{chlebicka_guide}. We refer to this article for the up-to-date overview of   results, description of key underlying difficulties, and the list of recent literature. For overview of results concerning elliptic PDEs in Orlicz and Musielak--Orlicz spaces we also refer to the monograph \cite{HH} and to articles \cite{Hasto1, Hasto2, Hasto3, byun2, IC1, CDF} where the existence and regularity of solutions for elliptic problems in such spaces is studied. 

Our framework is general: with our results we cover natural scope of Orlicz spaces. We require that either the complementary function $\widetilde{M}$ of the $N$-function $M$ satisfies the $\Delta_2$ condition or the condition which we name (C2) holds. This condition (C2) coming from \cite{chlebicka_parabolic2,chlebicka_parabolic3}, guaranteeing the modular density of smooth functions, always holds when the $N$ function is independent of $x$ variable. So, in such case, i.e.  considering the possibly anisotropic Orlicz growth we are not restricted to any class satisfying doubling conditions. Our results are valid, for example, for the following cases,
\begin{align*}
&M(x,\xi) = |\xi|\ln(1+|\xi|),\\
&M(x,\xi) = |\xi|(\exp|\xi|-1).
\end{align*}
By anisotropy we may mean the different behavior of gradient of 
a function in directions of various coordinates, so we could take
$$
M(x,\xi) = \sum_{i=1}^dB_i(\xi_i),
$$
where $\xi=(\xi_1,\ldots,\xi_d)$ and $B_i$ are Young functions. But our results also cover far more general cases, e.g., in two dimensional case we can consider examples such as
$$
M(x,\xi) = |\xi_1-\xi_2|^\alpha + |\xi_1|^\beta\ln^\delta(c+|\xi_1|), \ \ \alpha,\beta\geq 1,
$$
with $\delta\in \R$ if $\beta > 1$ or $\delta>0$ if $\beta=1$ with large enough $c$. cf. \cite{Trudinger}. We refer to the articles \cite{Cianchi1, Chianci_Alberico} for further results and discussion on existence of generalized solutions, comparison principle, and regularity for elliptic problems of divergent type (with single valued $A$) for the fully anisotropic case.  
For ''$x$-dependent'' spaces we need either doubling of $\widetilde{M}$ or the condition (C2). Our framework covers the case of weighted Sobolev spaces as well as variable spaces governed by
$$
M(x,\xi) = |\xi|^{p(x)}, 1\ll p\ll \infty,
$$
where no log-H\"{o}lder continuity is needed, or double phase spaces
$$
M(x,\xi) = |\xi|^p + a(x) |\xi|^q, \ \ 1<p<q<\infty,
$$
without any conditions on $p$ and $q$, and with bounded $a$ including the borderline situation
$$
M(x,\xi) = |\xi|^p + a(x) |\xi|^p\ln(e+|\xi|), \ \ 1<p<q<\infty,
$$
where $a(x) \geq  0$ is bounded and, typically, $a(x) = 0$ on some subset of the problem domain $\Omega$. We stress that double phase case with single valued $A$, as concerns the regularity of minimizers for associated variational problems, have been recently intensively investigated, cf. \cite{Colombo_Mignone, Baroni_Colombo_Mignone, Baroni_Colombo_Mignone_2, defili_Mignone, byun_oh}. In the present work it is also allowed to consider $M$ governed by various combinations of the above examples, such as, for example
$$
M(x,\xi) = a(x)|\xi|\ln(1+|\xi|),
$$ 
$$
M(x,\xi) = a(x)|\xi|(\exp|\xi|-1),
$$ 
or
$$
M(x,\xi) = a_1(x)|\xi_1|^{p_1(x)}(\exp|\xi|-1)+a_2|\xi_2(x)|^{p_2(x)}\ln(1+|\xi|).
$$
To our knowledge, for variable exponent spaces, double phase spaces, or Orlicz spaces without the growth restrictions, the question of the existence, uniqueness and regularity even of weak solution for the case of multivalued leading term $A$, is open. We cover all these results, and far more, in the generalized framework of renormalized solutions.    

The natural space related to the modular function $M$ is the Musielak--Orlicz space $L_{M}(\Omega)$, and the space to which the solutions of the elliptic boundary value problems are expected to belong is $\{ u\in W^{1,1}_0(\Omega)\, :\ \nabla u\in L_{M}(\Omega)\}$. Once the space is established one asks to which space should the forcing $f$ belong. There are two paths one can follow. The first path is to seek the optimal Sobolev embedding of the solution space  in the space $L^p(\Omega)$ or in some Orlicz or Musielak--Orlicz space involving only function itself and not its derivatives. Then $f$ would belong to the dual of this space. Although the results that characterize the optimal Orlicz space such that the Sobolev embedding holds exist (see e.g. \cite{Cianchi} for anisotropic but homogeneous case, i.e. when $M(x,\xi)$ does not depend on $x$ but depends on the full vector $\xi$ rather that its norm only), we avoid this difficult question by following another possibility, namely we pursue the path of defining the generalized notion of solutions, in our case the so called \textit{renormalized solutions}. This notion allows us to proceed  if $f\in L^1(\Omega)$. The concept of renormalization of the solution, now standard, has been defined first by DiPerna and Lions in context of the transport problems \cite{diperna}, and later generalized by Benilan et al. \cite{Boccardo} to elliptic problems in divergence form in the situations where the classical concept of weak solutions is insufficient. It is worth to add, that the notion of renormalized solutions is only one of possibilities to work with very weak solution notion which, on one hand can be proved to exist, and, on the other hand, under appropriate assumptions, are expected to enjoy further desirable properties such as, for instance, uniqueness or regularity of solutions, or comparison principles. The other solution notions could be SOLA (solutions obtained as limits of approximation) or entropy solutions. We refer to \cite{BG, C3, ZZ, BDS} for some recent results on these types of solutions for elliptic problems of divergence type, in particular the equivalence of these notions for the case of $p$-Laplacian has been obtained in  \cite{kilpel}.

Although the existence results which correspond to the main theorem of the present paper have been obtained in \cite{gwiazda_wittbold,gwiazda_corrigendum,gwiazda_skrzypczak} for the case of a single-valued map in the leading term, the proof of existence in the present article is not a straightforward generalization of the arguments of these papers. The key difficulty and novelty of the present argument lies in showing that the sequences 
$\{a^{\epsilon}(x,\nabla T_k(u^\epsilon))\cdot \nabla T_k(u^{\epsilon})\}_{\epsilon},$
where $T_k$ is the truncation operator, $a^\epsilon$ is the mollification of the multifunction present in the leading term, and $u^\epsilon$ is the approximative sequence, are equiintegrable.
 This property, which is later needed to apply the Minty trick and pass to the limit in the approximative problems, was obtained in  \cite{gwiazda_wittbold,gwiazda_corrigendum,gwiazda_skrzypczak} with the use of the Young measures. 
 The theory of Young measures, however, is in a natural way ''compatible'' with Carath\'{e}odory functions, and to apply it directly one would need the maximal monotone multivalued map $\Omega\times \R^d \ni(x,\xi)\mapsto A(x,\xi)\in \R^d$ to have a Carath\'{e}odory  selection, which would imply it to be single-valued. 
 We deal with the lack of such a selection by using the Minty transformation \cite{Francfort} which permits to associate,
  by a clockwise rotation of the graph by $45^\circ$, the Carath\'{e}odory function with a maximal monotone multifunction. We stress that the method of Minty transformation has been already successfully used in a different context for the elliptic inclusions in divergence form in Sobolev spaces in \cite{Gwiazda_zatorska}.  
  
Our existence result can be seen as a generalization of the results from \cite{gwiazda_wittbold,gwiazda_corrigendum,gwiazda_skrzypczak} to the case of inclusions. We  stress that in our coercivity and growth condition named in the sequel (A3) 
$$
\eta \cdot \xi \geq c_A(M(x,\xi)+\widetilde{M}(x,\eta)) - m(x) \ \textrm{for almost every}\ x\in \Omega \ \textrm{and for every}\ \xi\in \R^d, \eta\in A(x,\xi).
$$
we meticulously deal with the case of a function $m\in L^1(\Omega)$ thus generalizing even the result for the equation from \cite{gwiazda_skrzypczak} where is was assumed that $m=0$. 

 The second main result of the present article, the uniqueness Theorem \ref{thm:uniqueness}, follows by an argument of testing the renormalized form of the equation by a truncation of the difference of two truncations. Although the main assumption, strict monotonicity of the leading operator, is a classical condition to obtain uniqueness, and similar argument has been used, for example, in the context of variable exponent Sobolev spaces in \cite{bendahmahe}, the contribution of the present paper is the adaptation to the case of general Musielak--Orlicz spaces. Note that in \cite{gwiazda_wittbold,gwiazda_corrigendum} the presence of the strictly monotone lower order term ensures uniqueness, while here we rely on the strict monotonicity of the leading operator.  
  
Finally in our last main result, Theorem \ref{thm:renorm_weak}, we establish the $L^\infty(\Omega)$ regularity of the renormalized solution which allows us to deduce that this renormalized solution is in fact weak. The argument is based on the method of \cite{Cianchi1} which was later used and extended in \cite{Chianci_Alberico, Alberico_Cianchi} and which relies on the comparison with the solution of the symmetrized problem which can be calculated explicitly. The application of this method in the context of inclusions and renormalized solutions for the elliptic problems in divergence form, is, to our knowledge, another novelty of the present article. 
We stress that the corresponding results in  
\cite[Proposition 4.3]{gwiazda_wittbold}, \cite[Lemma 2.5]{benilan_wittbold}, \cite[Proposition 5.2]{wittbold_zimmerman} 
are based on the  Stampacchia argument, which needs much stronger assumptions on the problem data. 
Namely, the aforementioned papers need that $f\in L^d(\Omega)$, and the $N$-function satisfies $|\xi|^{r} \leq cM(x,\xi) + C$ for some $r>1$ and constants $C, c > 0$. We significantly relax these assumptions, cf. (W1)--(W2) in Section \ref{sec:main}, where the assumption (W1) on $f$ appears, at least in homogeneous case, sharp. Thus we not only show that the symmetrization argument is valid for elliptic inclusions in Musielak--Orlicz spaces in framework of renormalized solutions, but we improve the results of the articles \cite{gwiazda_wittbold, benilan_wittbold, wittbold_zimmerman} obtained there for the case of equations.    

Although the existence of the solution understood in the generalized (in our case, renormalized) sense for elliptic and parabolic inclusions with the multivalued divergence operator has not been studied before, according to our knowledge, generalized notions of solutions have been considered for the problems governed by scalar conservation laws with a discontinuous and possibly multivalued flux function. In particular, Carillo in \cite{Carillo} proves the existence of the entropy solution for the scalar conservation law with discontinuous flux, the result later generalized in \cite{gwiazda_functional} to the case of solution dependent source and further in \cite{gwiazda_2} to the case where the source term can also be possibly multivalued. 


\section{Main results.}\label{sec:main}

\noindent In this section we formulate three main results of the article, the theorem on the existence of a renormalized solution, its uniqueness, and the one on the relation between the renormalized solution and the weak one. The definitions needed to understand this chapter, such as the definition of an $N$-function $M$, its Fenchel conjugate $\widetilde{M}$, $\Delta_2$ condition, and of Musielak--Orlicz space $L_M(\Omega)$ are contained in Appendix \ref{appa}. 

\medskip

\noindent \textbf{Assumptions.} We start from the assumptions we will need for the data of our problem. First we introduce the multivalued operator $A:\Omega\times \R^d\to 2^{\R^d}$ on which we impose the following conditions.
\begin{itemize}
	\item[(A1)] $A$ is measurable with respect to the $\sigma$-algebra $\mathcal{L}(\Omega)\otimes\mathcal{B}(\R^d)$ on its domain $\Omega\times \R^d$ and the $\sigma$-algebra $\mathcal{B}(\R^d)$ on its range, i.e.
	$$
	\textrm{for every closed set}\ \  D\subset \R^d \ \ \textrm{there holds}\ \ \{(x,\xi)\in \Omega\times \R^d\, :\ A(x,\xi)\cap D \neq \emptyset \}\in \mathcal{L}(\Omega)\otimes \mathcal{B}(\R^d)
	$$  Here $\mathcal{B}(\R^d)$ is the Borel $\sigma$-algebra and $\mathcal{L}(\Omega)$ is the Lebesgue $\sigma$-algebra. 
	\item[(A2)] the multivalued map $A(x,\cdot)$ is maximally monotone for a.e. $x\in \Omega$,
	\item[(A3)] there exists an $N$-function $M$, a constant $c_A\in(0,1]$, and a nonnegative function $m\in L^1(\Omega)$ such that
	$$
	\eta \cdot \xi \geq c_A(M(x,\xi)+\widetilde{M}(x,\eta)) - m(x) \ \textrm{for almost every}\ x\in \Omega \ \textrm{and for every}\ \xi\in \R^d, \eta\in A(x,\xi).
	$$
\end{itemize}
We will discuss these assumptions and some properties of $A$ in Section \ref{multivalued}. Note that $N$-function $M$ which appears in (A3) is given by Definition \ref{def:N}, and its complimentary function $\widetilde{M}$ by \eqref{fenchel}. 

We will seek for our \textit{renormalized solution} with gradients of truncations in Musielak--Orlicz spaces with an $N$-function $M$. To guarantee the existence of the renormalized solution we need an additional assumption on the $N$-function. To this end, we introduce the conditions (C1) and (C2). For our results to hold we need only one of these conditions to hold, which is \textit{either (C1) or (C2)}. We stress that we do not require ${M}$ to satisfy $\Delta_2$ condition. If, in turn, complementary function $\widetilde{M}$ satisfies $\Delta_2$ then our results hold, but $\Delta_2$ condition on $\widetilde{M}$ is not required - if it does not hold then we need  to assume (C2) in its place. So, assuming (C2), our results are valid if neither $M$ nor $\widetilde{M}$ satisfy $\Delta_2$.   
\begin{itemize}
	\item[(C1)] The complementary function $\widetilde{M}$ satisfies the $\Delta_2$ condition given by \eqref{delta2}.
	\item[(C2)] There exists a function $\Theta:[0,1]^2\to [0,\infty)$ nondecreasing with respect to each of the variables, such that 
	\begin{equation}\label{eq:condition_smoothness}
	\limsup_{\delta\to 0^+}\Theta(\delta,\delta^{-d}) < \infty,
	\end{equation}
	which expresses the relation between $M(x,\xi)$ and the function
	$$
	M_{Q}(\xi):=\essinf_{x\in \Omega\cap Q} M(x,\xi).
	$$
	We assume that there exist $\xi_0\in\R^d$ and $\delta_0>0$ such that for every $\delta < \delta_0$ and every cube $Q\subset \R^d$ with $\textrm{diam}\, Q < 4\delta \sqrt{d}$ there holds 
	$$
	\frac{M(x,\xi)}{\widetilde{\widetilde{M_Q}}(\xi)}\leq \Theta(\delta,|\xi|)\ \ \textrm{for a.e.}\ \ x\in Q\cap \Omega\ \ \textrm{and every}\ \ \xi\in \R^d, |\xi| > |\xi_0|,
	$$
	where $\widetilde{\widetilde{M_Q}}$ is the greatest convex minorant of $M_Q$, coinciding with its second complementary function. 
\end{itemize}

\begin{remark}
	The condition (C2) comes from \cite{chlebicka_parabolic2, chlebicka_parabolic3} (also see \cite{Ahmida} for the isotropic version) and it guarantees the modular density of smooth functions in the Musielak--Orlicz--Sobolev space (or, in other words, it excludes the so called Lavrentiev phenomenon \cite{Lavrentiev}). If in addition we assume that
	$$
	M(x,\xi) \geq c_{gr}|\xi|^p\ \ \textrm{with}\ \ p>1\ \ \textrm{and}\ \ c_{gr}>0,
	$$
	then \eqref{eq:condition_smoothness} in (C2) can be replaced with the condition
	$$
	\limsup_{\delta\to 0^+}\Theta(\delta,\delta^{-\frac{d}{p}}) < \infty.
	$$
	We stress that this latter condition is known to be sharp for the modular density of smooth function at least in the variable exponent spaces \cite{Zhikov1, Zhikov} and double-phase spaces \cite{Colombo_Mignone, Baroni_Colombo_Mignone}. 
\end{remark}

\noindent \textbf{Renormalized solution and its existence.} We pass to the key definition of this article: we will define the renormalized solution of the problem \eqref{prblm_1}--\eqref{prblm_2}. To this end, first we note that the $k$-th truncation of the measurable function $f:\Omega\to \R$ is defined by
$$
T_kf(x) = \begin{cases}
f(x) \quad \textrm{if} \quad |f(x)| \leq k,\\
k\frac{f(x)}{|f(x)|}\quad \textrm{otherwise}.
\end{cases}
$$
Define the space 
 $$
V_0^M = \{ v\in W^{1,1}_0(\Omega)\, :\ \nabla v \in L_M(\Omega) \},
$$
where $L_M(\Omega)$ is the Musielak--Orlicz space defined by the $N$-function $M$, cf. Definition \ref{def:Musielak} in Appendix \ref{appa}. 
We are ready to formulate the definition of the renormalized solution to our problem.
\begin{definition}\label{def:renormalized}
	A measurable function $u:\Omega \to \R$ is called a \textit{renormalized solution} to the problem \eqref{prblm_1}--\eqref{prblm_2} if
	\begin{itemize}
		\item[1.] For every $k>0$ there holds $T_k(u)\in V_0^M\cap L^\infty(\Omega)$.
		\item[2.]  There exists a measurable function $\alpha:\Omega\to \R^d$ such that for a.e. $x\in \Omega$ there holds $\alpha(x)\in A(x,\nabla u(x))$ (i.e. $\alpha$ is a measurable selection of $A(\cdot,\nabla u(\cdot))$) such that for  any $h\in C^1_c(\mathbb{R})$ and for any test function 
		$w\in W^{1,\infty}_0(\Omega)$ we have 
		\begin{equation}\label{renorm}
		\int_{\Omega}\alpha \cdot \nabla (h(u)w)\, dx= \int_{\Omega}f h(u) w\, dx.
		\end{equation}
		\item[3.] There holds
		\begin{equation}\label{renorm_control}
		\lim_{k\to\infty}\int_{\{k< |u(x)| < k+1\}} \alpha\cdot \nabla u \, dx = 0.
		\end{equation}
	\end{itemize}
\end{definition}

\begin{remark}
	The generalized gradient of the function $u$ such that $T_k u\in V^M_0$ is defined in the sense of \cite{Boccardo}: there exists a measurable function $v:\Omega\to \R^d$ such that $v \chi_{\{|v|< k\}} = v \chi_{\{|v| \leq k\}} = \nabla T_k(u)$ for almost every $x\in \Omega$ for each  $k>0$.
\end{remark}

\begin{remark}
	The selection $\alpha$ in item 2. of the above definition is also understood in the following sense of \cite{Boccardo}: namely $\alpha:\Omega\to \R^d$ is a measurable function such that for every $k>0$ there exists the selection $\alpha_k\in L_{\widetilde{M}}(\Omega)$ of the multifunction $A(\cdot, \nabla T_k u)$ such that $\alpha_k\chi_{\{ |u| <k \}} = \alpha \chi_{\{ |u| <k \}}$.  We also note, that using the Minty transform the fact that $\alpha_k$ is a selection of  $A(\cdot, \nabla T_k u)$ can be equivalently expressed as 
	$$
	\alpha_k(x) - \nabla T_k u(x) = \varphi_{A(x)}(	\alpha_k(x) + \nabla T_k u (x))
	$$
	for almost every $x\in \Omega$, where $A(x):\R^d\to 2^{R^d}$ is defined as $A(x)(\xi) = A(x,\xi)$, and $\varphi_{A(x)}$ is given by \eqref{minty_tr}. 
\end{remark}

\begin{remark}
	If the condition (C2) holds (this condition is the natural assumption that guarantees the modular density of smooth and compactly supported functions in $L^\infty(\Omega)\cap V_0^M$), then in place of test functions $w\in W^{1,\infty}_0(\Omega)$ we can take them from the broader space $V^M_0 \cap L^\infty(\Omega)$. Indeed, this possibility follows directly from Theorem \ref{thm:approx}.
\end{remark}
\noindent The following theorem is the main result of the article. It will be proved in Section \ref{sec:ex_proof}.
\begin{theorem}\label{thm:existence}
	Suppose that an $N$-function $M$ satisfies either (C1) or (C2). If $f\in L^1(\Omega)$ and $A$ satisfies (A1)--(A3) then the problem \eqref{prblm_1}--\eqref{prblm_2} has a renormalized solution.
\end{theorem}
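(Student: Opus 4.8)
The plan is to construct the renormalized solution as a limit of solutions to regularized problems, and the central new difficulty will be handling the multivaluedness of $A$ via the Minty transform. First I would mollify: replace $A$ by a Carath\'{e}odory single-valued approximation. The natural device is to pass through the Minty transform $\varphi_{A(x)}$, which is a Carath\'{e}odory function (single-valued, monotone-related), mollify it in the $\xi$ variable to obtain $\varphi^\eps$, and then transform back to obtain single-valued Carath\'{e}odory maps $a^\eps(x,\cdot)$ satisfying growth and coercivity conditions compatible with an $N$-function $M^\eps$ (or, after a further truncation, with $M$ itself up to controlled constants). One also regularizes $f$: take $f_n = T_n f$, or a smooth approximation, so that $f_n \in L^\infty(\Omega)$ with $\|f_n\|_{L^1} \le \|f\|_{L^1}$ and $f_n \to f$ in $L^1(\Omega)$. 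For the doubly-regularized problem $-\div a^\eps(x,\nabla u) = f_n$ with the single-valued, coercive, bounded, monotone leading term, existence of a weak solution $u^{\eps,n} \in V_0^{M^\eps}$ follows from classical monotone operator theory in the Musielak--Orlicz setting (using, e.g., a Galerkin scheme together with the density results, and condition (C1) or (C2)), exactly along the lines of \cite{gwiazda_skrzypczak, gwiazda_wittbold, gwiazda_corrigendum}.

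Next come the a priori estimates. Testing the regularized equation by $T_k(u^{\eps,n})$ and using (A3)-type coercivity gives a uniform bound on $\int_\Omega M(x,\nabla T_k(u^{\eps,n}))\,dx + \int_\Omega \widetilde{M}(x, a^\eps(x,\nabla T_k(u^{\eps,n})))\,dx \le k\|f\|_{L^1} + |\Omega|\,\|m\|_{L^1}$ (up to the constant $c_A$), which is the standard $Ck+C$ growth. This yields, for each fixed $k$, weak-$*$ boundedness of $\nabla T_k(u^{\eps,n})$ in $L_M(\Omega)$ and of the fluxes in $L_{\widetilde M}(\Omega)$, and via a truncation/Chebyshev argument a uniform bound on the measure of super-level sets $|\{|u^{\eps,n}| > k\}| \le C/k$. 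From this one extracts, by a diagonal argument over $k$, a limit function $u$ with $T_k(u) \in V_0^M \cap L^\infty(\Omega)$, together with $\nabla T_k(u^{\eps,n}) \rightharpoonup \nabla T_k(u)$ modularly (or weakly-$*$) in $L_M$ and fluxes $a^\eps(x,\nabla T_k(u^{\eps,n})) \rightharpoonup \alpha_k$ in $L_{\widetilde M}$; these glue into a single $\alpha$ as in the two Remarks following Definition \ref{def:renormalized}. The renormalization control \eqref{renorm_control} is obtained by testing with suitable cut-off functions of the form $T_1(u^{\eps,n} - T_k(u^{\eps,n}))$ and passing to the limit, the standard Benilan et al. argument.

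The hard part, and the heart of the paper as the introduction emphasizes, is the identification of the limit flux: showing $\alpha(x) \in A(x,\nabla u(x))$ a.e. This requires (a) a.e.\ convergence of the truncated gradients $\nabla T_k(u^{\eps,n}) \to \nabla T_k(u)$, and (b) passing to the limit inside the (now single-valued but $\eps$-dependent, hence moving) nonlinearity. Step (a) follows once one proves equiintegrability of $\{a^\eps(x,\nabla T_k(u^{\eps,n})) \cdot \nabla T_k(u^{\eps,n})\}$; this is precisely where the classical proofs invoke Young measures generated by Carath\'{e}odory nonlinearities, which is unavailable here because a maximally monotone $A$ with a Carath\'{e}odory selection would have to be single-valued. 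The resolution is to work not with $(x,\xi)\mapsto A(x,\xi)$ directly but with its Minty transform $\varphi_{A(x)}$: this is a genuine Carath\'{e}odory function of the rotated variables, the Young-measure machinery applies to it, and the equiintegrability and monotone-limit (Minty trick) arguments can be carried out on the rotated graph and then transported back. Concretely, one would pass to the limit first in $\eps$ (the mollification of the Minty transform converges locally uniformly, so $\varphi^\eps_{A(x)} \to \varphi_{A(x)}$) and then in $n$; combine the weak convergence of fluxes and gradients with the liminf inequality for the modular and the monotonicity encoded in $\varphi_{A(x)}$ to conclude $\alpha_k - \nabla T_k u = \varphi_{A(x)}(\alpha_k + \nabla T_k u)$ a.e., i.e. $\alpha_k$ is a selection of $A(\cdot, \nabla T_k u)$. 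Finally, using (C2) and the approximation theorem one upgrades the test-function class and passes to the limit in the renormalized identity \eqref{renorm} for arbitrary $h \in C^1_c(\R)$ and $w \in W^{1,\infty}_0(\Omega)$. The delicate point throughout is the careful bookkeeping of the $L^1$ datum $m$ in (A3) and of the two regularization parameters, ensuring that constants in the modular estimates do not degenerate as $\eps, n \to \infty$.
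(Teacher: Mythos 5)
Your high-level outline agrees with the paper on the overall architecture (regularize, a priori estimates via $T_k$, extract limits, equiintegrability via Young measures, Minty trick to identify the flux, upgrade test functions), but the construction of the regularization is materially different from what the paper does, and as you describe it it has a gap.

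You propose to mollify the Minty transform $\varphi_{A(x)}$ in $\nu$ and then invert the Minty transform to obtain a single-valued Carath\'{e}odory approximant $a^\eps(x,\cdot)$. This step does not obviously produce what you need. The Minty transform of a maximally monotone graph is $1$-Lipschitz, and so is its mollification $\varphi^\eps$; but the inverse Minty transform of a $1$-Lipschitz map is again just a maximally monotone graph, which in general is multivalued and can even have empty values on large sets (e.g.\ $\varphi(\nu)=\nu$ pulls back to the vertical line $\{0\}\times\R^d$). Smoothness of $\varphi^\eps$ by itself does not force the inverse transform to be a function of $\xi$; one would need a strict Lipschitz bound $<1$ or a separate quantitative argument, neither of which you supply. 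The paper avoids this entirely: it takes a measurable selection $a$ of $A$ (which exists by (A1) and Aumann--von Neumann) and mollifies $a(x,\cdot)\ast\phi^\eps$ directly in $\xi$. This is automatically single-valued, Carath\'{e}odory, monotone, and verifies the (A3)-type estimate \eqref{eq:epsilon_est} with $\eps$-independent constants by a direct Fenchel--Young computation.

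The Minty transform is used in the paper for a different and more delicate purpose than the one you assign it. The obstruction to the Young-measure argument is that the equiintegrability/identification step naturally involves integrals of $a(x,\lambda)\cdot\lambda$ and $a(x,\lambda)\cdot\nabla T_k u$ against the mollifier-generated measures, and $a$ is only measurable, not Carath\'{e}odory, so the fundamental theorem on Young measures does not apply. The paper's fix (Step 7) is to push forward the measures $\mu^{k,\eps}_x$ through the bijection $g_x(\lambda)=\lambda+a(x,\lambda)$; in the new coordinates the relevant integrands become $\frac{\lambda\pm\varphi_{A(x)}(\lambda)}{2}$, which \emph{are} Carath\'{e}odory in $(x,\lambda)$, and then one verifies tightness and the uniform-integrability hypothesis (the claims in Step 7) before invoking the fundamental theorem. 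Your sketch gestures at this ("work with its Minty transform") but does not capture the push-forward trick, the commutator estimate (Step 6) that controls the error between $a^\eps(x,\nabla T_ku_\eps)\cdot\nabla T_ku_\eps$ and $\int\phi^\eps(s)a(x,\nabla T_ku_\eps-s)\cdot(\nabla T_ku_\eps-s)\,ds$, or the auxiliary equality (Step 5), all of which are load-bearing. Finally, you use a two-parameter regularization $(\eps,n)$, whereas the paper couples the truncation of $f$ to $\eps$ ($T_{1/\eps}f$); this is cosmetic, but your scheme then requires an extra diagonal extraction you do not track.

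So: the skeleton is right, but the regularization you propose is not justified as stated, and the actual role of $\varphi_{A(x)}$ in the paper is as a Carath\'{e}odory re-parametrization inside the Young-measure step, not as the object being mollified.
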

\noindent \noindent \textbf{Uniqueness of renormalized solution.} In the next theorem on renormalized solution uniqueness we distinguish between cases (C1) and (C2). In the case (C2) Theorem \ref{thm:approx} directly implies that we can take any function belonging to $L^\infty(\Omega)\cap V^M_0$ as the test function in  \eqref{renorm}. This is not the case if we assume (C1). If (C1) holds, we can test \eqref{renorm} only by those functions $w\in L^\infty(\Omega)\cap V^M_0$ which are limits of the sequences $\{ w_n \}_{n=1}^\infty$  belonging to $W^{1,\infty}_0(\Omega)$ convergent in the sense that for every $v\in E_{\widetilde{M}}(\Omega)$, $z\in L^1(\Omega)$, and $k\geq 0$ there holds
\begin{align}
\lim_{n\to \infty} \int_{\Omega} v\cdot \nabla T_k(w_n) \, dx = \int_{\Omega} v\cdot \nabla T_k(w) \, dx\qquad \textrm{and}\qquad \lim_{n\to \infty} \int_{\Omega} z  T_k(w_n) \, dx = \int_{\Omega} z  T_k(w) \, dx,\label{eq:convergence_c1}
\end{align}
i.e. $w_n$ should converge to $w$ weakly-* in $L^\infty(\Omega)$ and $\nabla T_k(w_n)$ should converge to $\nabla T_k(w)$ weakly-* in $L_{\widetilde{M}}(\Omega)$. Thus for the case (C1) we obtain the uniqueness and $L^\infty$ regularity in the class of renormalized solutions which are obtained as the limits as $\epsilon\to 0$ of solutions to the approximative problems \eqref{approx_1}-\eqref{approx_2} used in the proof of Theorem \ref{thm:existence}. 

\bigskip

\noindent The next result establishes the uniqueness of the renormalized solution. The proof is contained in Section \ref{sec:uniq}.
\begin{theorem}\label{thm:uniqueness}
	Assume that $f\in L^1(\Omega)$ and $A$ satisfies assumptions (A1)--(A3). In addition,  assume that $A$ is strictly monotone, i.e. if $\xi\neq \eta$, then 
	for every $g\in A(x,\xi), h\in A(x,\eta)$ and a.e.	$x\in \Omega$ there holds $(g-h)\cdot (\xi-\eta) > 0$.
	\begin{itemize}
		\item If $M$ satisfies  (C1) then the renormalized solution to the problem \eqref{prblm_1}--\eqref{prblm_2} is unique in the class of solutions obtained as the limit as $\epsilon\to 0$ of solutions of the approximative problems \eqref{approx_1}-\eqref{approx_2}.
		\item If an $N$-function $M$ satisfies  (C2) then the renormalized solution to the problem \eqref{prblm_1}--\eqref{prblm_2} is unique. 
	\end{itemize} 
\end{theorem}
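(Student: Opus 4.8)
I would argue as follows. Let $u_1,u_2$ be two renormalized solutions of \eqref{prblm_1}--\eqref{prblm_2}, with associated measurable selections $\alpha_1,\alpha_2$; for each $k>0$ denote by $\alpha_{1,k},\alpha_{2,k}\in L_{\widetilde M}(\Omega)$ the truncated selections of $A(\cdot,\nabla T_k u_1)$, $A(\cdot,\nabla T_k u_2)$ provided by the Remark after Definition~\ref{def:renormalized}, so that $\alpha_{i,k}=\alpha_i$ a.e.\ on $\{|u_i|<k\}$ and $\nabla T_k u_i=\chi_{\{|u_i|<k\}}\nabla u_i$. In case (C1) we moreover assume that both $u_i$ arise as limits of the approximate problems \eqref{approx_1}--\eqref{approx_2}. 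The plan is: for fixed $k,\delta>0$, use the \emph{double truncation}
\[
w_{k,\delta}:=T_\delta\bigl(T_k u_1-T_k u_2\bigr)\in V_0^M\cap L^\infty(\Omega)
\]
as a test function in \eqref{renorm}, written for $u_1$ and for $u_2$, with the cut-off $h=h_k\in C^1_c(\mathbb{R})$ satisfying $h_k\equiv1$ on $[-k,k]$, $\operatorname{supp} h_k\subset(-k-1,k+1)$ and $|h_k'|\le2$; then subtract the two identities. That $w_{k,\delta}$ is an admissible test function is immediate from Theorem~\ref{thm:approx} in case (C2) (cf.\ the third Remark after Definition~\ref{def:renormalized}); in case (C1) it is obtained by approximating $w_{k,\delta}$ by $W^{1,\infty}_0(\Omega)$-functions in the sense of \eqref{eq:convergence_c1}, which is exactly where the restriction to solutions of \eqref{approx_1}--\eqref{approx_2} enters.

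Expanding $\nabla(h_k(u_i)w_{k,\delta})=h_k'(u_i)w_{k,\delta}\nabla u_i+h_k(u_i)\nabla w_{k,\delta}$ and subtracting, a short computation based on the identities $\nabla T_k u_i=\chi_{\{|u_i|<k\}}\nabla u_i$ and $\alpha_{i,k}=\alpha_i$ on $\{|u_i|<k\}$ isolates the nonnegative quantity
\[
\mathcal{M}_{k,\delta}:=\int_{\{|T_k u_1-T_k u_2|<\delta\}}(\alpha_{1,k}-\alpha_{2,k})\cdot(\nabla T_k u_1-\nabla T_k u_2)\,dx \geq 0,
\]
the sign being a pointwise consequence of the monotonicity (A2) of $A(x,\cdot)$. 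The remaining contributions are: (i) the right-hand side $\int_\Omega f(h_k(u_1)-h_k(u_2))w_{k,\delta}\,dx$; (ii) the \emph{renormalization corrections} $\int_\Omega w_{k,\delta}\,h_k'(u_i)\,\alpha_i\cdot\nabla u_i\,dx$, supported on $\{k<|u_i|<k+1\}$; and (iii) \emph{boundary terms} supported, for $\delta<k$, on the thin level sets $\{k-\delta<|u_j|<k\}$, arising because $h_k(u_i)\alpha_i$ and $\alpha_{i,k}$ differ on $\{|u_i|\ge k\}$. With $\delta$ fixed, each of (i)--(iii) tends to $0$ as $k\to\infty$: in (i) one uses $|w_{k,\delta}|\le\delta$, $f\in L^1(\Omega)$ and $h_k(u_1)-h_k(u_2)\to0$ a.e.\ (dominated convergence); in (ii) one uses $|w_{k,\delta}|\le\delta$, $|h_k'|\le2$, the coercivity bound $\alpha_i\cdot\nabla u_i\ge-m$ from (A3), $m\in L^1(\Omega)$ and the renormalization control \eqref{renorm_control}; in (iii) one uses the Fenchel--Young inequality together with the two one-sided estimates that (A3) delivers, namely $M(x,\nabla T_k u_j)\le c_A^{-1}(\alpha_{j,k}\cdot\nabla T_k u_j+m)\in L^1(\Omega)$ and $\widetilde M(x,\alpha_{j,k})\le c_A^{-1}(\alpha_j\cdot\nabla u_j+m)$ on $\{|u_j|<k\}$, again combined with \eqref{renorm_control} to kill the level-set integrals as $k\to\infty$.

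Hence $\mathcal{M}_{k,\delta}\to0$ as $k\to\infty$. The integrand of $\mathcal{M}_{k,\delta}$ is nonnegative and, using $\nabla T_k u_i\to\nabla u_i$, $\alpha_{i,k}\to\alpha_i$ a.e.\ and $\chi_{\{|T_k u_1-T_k u_2|<\delta\}}\to\chi_{\{|u_1-u_2|<\delta\}}$ a.e., it converges pointwise on $\{|u_1-u_2|<\delta\}$ to $(\alpha_1-\alpha_2)\cdot(\nabla u_1-\nabla u_2)$; Fatou's lemma therefore gives
\[
\int_{\{|u_1-u_2|<\delta\}}(\alpha_1-\alpha_2)\cdot(\nabla u_1-\nabla u_2)\,dx=0.
\]
Since this integrand is also $\ge0$ (monotonicity of $A$, with $\alpha_i\in A(x,\nabla u_i)$ a.e.), it vanishes a.e.\ on $\{|u_1-u_2|<\delta\}$, and the assumed strict monotonicity of $A$ forces $\nabla u_1=\nabla u_2$ a.e.\ on $\{|u_1-u_2|<\delta\}$. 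Letting $\delta\to\infty$ yields $\nabla u_1=\nabla u_2$ a.e.\ on $\Omega$. A standard argument of the renormalized theory (invoking \eqref{renorm_control} once more to dispose of the sets where $|u_i|$ is close to $k$, as in \cite{gwiazda_wittbold,bendahmahe}) then upgrades this to $T_k u_1=T_k u_2$ a.e.\ for every $k>0$, whence $u_1=u_2$ a.e.

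I expect the limit $k\to\infty$ in the second step to be the main obstacle: the difficulty is entirely concentrated in the terms where the selection $\alpha_i$ — which in general belongs to $L_{\widetilde M}(\Omega)$ only after truncation, i.e.\ only $\alpha_{i,k}$ does — appears restricted to sublevel or level sets of $u_i$. It is there that the renormalization decay \eqref{renorm_control} together with the \emph{balanced} structure of the coercivity (A3) (which controls $M(x,\nabla T_k u_i)$ and $\widetilde M(x,\alpha_{i,k})$ simultaneously in $L^1$, with $m$ only in $L^1$) are indispensable, and where the ordering of the limits — $k\to\infty$ at fixed $\delta$, then $\delta\to\infty$ — is essential rather than cosmetic. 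A secondary, purely technical obstacle, present only under (C1), is the admissibility of $w_{k,\delta}$ as a test function, which is precisely the reason for restricting, in that case, to renormalized solutions obtained as limits of the approximate problems.
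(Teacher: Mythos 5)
Your proposal follows essentially the same strategy as the paper's proof: test the renormalized identity, written for each $u_i$ with cut-off $h$, against a truncation of the difference of truncations, subtract the two identities, isolate the nonnegative monotonicity term, show the corrections vanish via \eqref{renorm_control}, (A3), and Fenchel--Young, and conclude with strict monotonicity; the admissibility of the test function under (C1) versus (C2) is argued exactly as in the paper (via \eqref{eq:convergence_c1} resp.\ Theorem~\ref{thm:approx}). Two points where you deviate, neither fatal but worth noting. First, you couple the inner truncation to the cutoff level (testing $T_\delta(T_ku_1-T_ku_2)$ against $h_k$ with $\operatorname{supp}h_k\subset(-k-1,k+1)$, then sending $k\to\infty$ at fixed $\delta$), while the paper decouples them (testing $T_k(T_{l+1}u_1-T_{l+1}u_2)$ against $h_l$ with $\operatorname{supp}h_l\subset[-l-1,l+1]$, then sending $l\to\infty$ at fixed $k,l_0$). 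Your choice introduces the extra mismatch region $\{k<|u_i|<k+1\}$ on which $h_k(u_i)\neq0$ yet $\nabla T_ku_i=0$, which the paper avoids by taking the inner truncation one level deeper; your item (iii) addresses the resulting boundary contributions correctly, so this is a cosmetic relocation of the error terms, not a new idea. Second, and more substantively, your conclusion is weaker than the paper's. After Fatou and strict monotonicity you obtain only $\nabla u_1=\nabla u_2$ a.e.\ on $\{|u_1-u_2|<\delta\}$ (and then on $\Omega$), and the upgrade to $u_1=u_2$ is relegated to ``a standard argument.'' For renormalized solutions this upgrade is genuinely delicate: the $u_i$ themselves are not Sobolev, only their truncations are, and $\nabla u_1=\nabla u_2$ does not give $\nabla T_ku_1=\nabla T_ku_2$ on the symmetric difference of $\{|u_1|<k\}$ and $\{|u_2|<k\}$, so one cannot simply conclude $T_ku_1-T_ku_2=0$ from vanishing of its gradient. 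The paper's formulation avoids this pitfall by deducing directly that the sets $\{0<|u_1-u_2|<k\}\cap\{|u_1|\le l_0\}\cap\{|u_2|\le l_0\}$ are null for all $k,l_0$, which yields $u_1=u_2$ without any intermediate passage through gradient equality. You should either spell out your deferred step (which will in effect reproduce the paper's null-set argument) or restate your conclusion in the paper's form.
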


\noindent \textbf{Boundedness of renormalized solutions.}   We establish that under reinforced assumptions the renormalized solution belongs to $L^\infty(\Omega)$. To this end we use the techniques of \cite{Cianchi, Chianci_Alberico, Cianchi1, Alberico_Cianchi} where the main concepts rely on using the generalization of Sobolev embedding into the case of finding the optimal Orlicz type space in which the nonhomogeneous Sobolev--Orlicz space embeds, and on comparing of the solution of the original problem with the solution of the corresponding one-dimensional symmetrized one. We begin with some definitions. If $f\in \mathcal{M}(\Omega)$ then by $\mu_f:[0,\infty)\to [0,\infty)$ we will denote the distribution function of $f$ given by
$$
\mu_f(t) = |\{ x\in \Omega\, :\ |f(x)| > t  \}|\ \ \textrm{for}\ \ t\geq 0,
$$
and by  $f^*:[0,\infty)\to [0,\infty]$, the decreasing rearrangement of $f$ given by
$$
f^*(s) = \inf \{t\geq 0\, :\ \mu_f(t) \leq s\}.
$$ 
Finally, by $f^{**}:(0,\infty) \to [0,\infty]$ we denote the maximal rearrangement of $f$ defined as
$$
f^{**}(s) = \frac{1}{s}\int_0^s f^*(r)\, dr.
$$ 
Now, for a homogeneous $N$-function $L:\R^d\to [0,\infty)$ by $L_{\circ}:[0,\infty)\to [0,\infty)$ we denote the homogeneous one dimensional $N$-function obeying
$$
|\{ \xi\in\R^d \,:\ L_{\circ}(|\xi|) \leq t \}| = |\{ \xi\in\R^d \,:\ L(\xi) \leq t \}|\ \ \textrm{for every}\ \ t\geq 0,
$$
i.e. $L_\circ$ is such that for every level $t$, the measure of sublevel sets for $L_{\circ}$ is the same as for $L$, $L_{\circ}$ can be understood as a kind of averaging of $L$. It can be used to define the radially increasing symmetral $L_{\bigstar}:\R^d\to [0,\infty)$ of $L$ by
$$
L_{\bigstar}(\xi) = L_\circ(|\xi|).
$$  
Finally by $L_\blacklozenge$ we denote the result of application to $L$, in sequence, the Fenchel transform, the operation of taking the radially increasing symmetral, and the Fenchel transform, again. Then, $L_\blacklozenge$ is the one dimensional homogeneous $N$-function and it is defined as 
$$
L_\blacklozenge(|\xi|) = \widetilde{\left(\widetilde{L}_{\bigstar}\right)}(\xi). 
$$
We will use these operations to the $N$-function $M_1$ given in Remark  \ref{rem:homo_minorant}, which is the homogeneous (but not necessarily isotropic) minorant of $M$, that is
$$
m_1(|\xi|) \leq M_1(\xi) \leq M(x,\xi)\ \ \textrm{for a.e.}\ \ x\in \Omega\ \ \textrm{and every}\ \ \xi\in\R^d.
$$
Thus, we will need the $N$-function
$$
(M_1)_\blacklozenge(|\xi|) = \widetilde{\left((\widetilde{M_1})_{\bigstar}\right)}(\xi). 
$$
Also, we call $\Psi_\blacklozenge:[0,\infty)\to [0,\infty)$ the increasing function given by
$$
\Psi_\blacklozenge(s) = \frac{(M_1)_\blacklozenge(s)}{s}\ \ \textrm{if}\ \ s>0\ \ \textrm{and}\ \ \Psi_\blacklozenge(0) = 0.
$$
In order to establish that the renormalized solution is bounded we need to reinforce our assumptions by the following ones.
\begin{itemize}
	\item[(W1)] There exists a constant $\lambda>1$ such that
	\begin{equation}\label{ass:f}
	\int_0^{|\Omega|}s^{\frac{1}{d}-1}\Psi_\blacklozenge^{-1}\left(\frac{\lambda}{c_A d\omega_d^{\frac{1}{d}}} s^{\frac{1}{d}}f^{**}(s)\right)\, ds < \infty, 
	\end{equation}
	where $c_A$ is given in \eqref{eq:epsilon_est}, and $\omega_d$ is the Lebesgue measure on one dimensional unit ball in $\R^d$, i.e., $\omega_d=\pi^{d/2}/\Gamma(1+\frac{d}{2})$. 
	
	\item[(W2)] The function $m$ in (A3) belongs to $L^\infty(\Omega)$. 
\end{itemize}

The following result will be proved in Section \ref{sec:renorm_weak}. Note that we obtain the result only for 'approximable' renormalized solutions, i.e., the ones which are obtained by the limiting procedure in the approximating problems.   This is similar as in the uniqueness Theorem \ref{thm:uniqueness} but there this restriction was needed only under assumption (C1). Here we need it in both cases (C1) and (C2).  
\begin{theorem}\label{thm:renorm_weak}
	Assume (A1)--(A3) and let $u$ be a renormalized solution to the problem \eqref{prblm_1}--\eqref{prblm_2} given by Definition \ref{def:renormalized}.  Assume (W1)--(W2). If either (C1) or (C2) holds and if $u$ is obtained as the limit of solutions to approximative problems \eqref{approx_1}--\eqref{approx_2} then $u$ belongs to $L^\infty(\Omega)$.   
\end{theorem}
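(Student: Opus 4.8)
The plan is to adapt the symmetrization/comparison method of Cianchi and Alberico--Cianchi to the approximate problems, obtain an $L^\infty$ bound on the approximate solutions $u^\epsilon$ that is uniform in $\epsilon$, and then pass to the limit. Concretely, let $u^\epsilon$ solve \eqref{approx_1}--\eqref{approx_2} (with the mollified $a^\epsilon$ in the leading term and, say, a truncated right-hand side $f^\epsilon$ with $\|f^\epsilon\|_{L^1}\le\|f\|_{L^1}$ and $f^\epsilon\to f$ appropriately). The first step is to test the approximate equation with $T_h((u^\epsilon-t)^+)$-type functions, or more precisely with the truncation at level $t$ cut off between $t$ and $t+h$, divide by $h$, and let $h\to0$ to produce the standard differential inequality for the distribution function. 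Using (A3) in the form $\alpha^\epsilon\cdot\nabla u^\epsilon\ge c_A M_1(\nabla u^\epsilon)-m(x)$ on the level set $\{|u^\epsilon|>t\}$, together with (W2) which makes $\int_{\{|u^\epsilon|>t\}}m\,dx\le\|m\|_\infty\mu_{u^\epsilon}(t)$ controllable, one arrives at an estimate of the form
\begin{equation*}
\int_{\{|u^\epsilon|>t\}} M_1(\nabla u^\epsilon)\,dx \le \frac{1}{c_A}\Bigl(\|m\|_\infty\,\mu_{u^\epsilon}(t) + \int_{\{|u^\epsilon|>t\}} |f^\epsilon|\,dx\Bigr).
\end{equation*}

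The second step is the symmetrization proper. Replacing $M_1$ by its homogeneous minorant and then by $(M_1)_\blacklozenge$ via the Pólya--Szegő-type inequality for the radially increasing symmetral (this is exactly the chain $M_1\to(\widetilde{M_1})_{\bigstar}\to(M_1)_\blacklozenge$ set up before the statement), and using the coarea formula together with the isoperimetric inequality in $\R^d$, one converts the above into an ordinary differential inequality for $\mu(t):=\mu_{u^\epsilon}(t)$, namely a bound on $-\mu'(t)$ from which, after integrating and using the definitions of $f^*$, $f^{**}$ and $\Psi_\blacklozenge$, one gets
\begin{equation*}
\|u^\epsilon\|_{L^\infty(\Omega)} \le \int_0^{|\Omega|} s^{\frac1d-1}\,\Psi_\blacklozenge^{-1}\!\Bigl(\frac{\lambda}{c_A d\omega_d^{1/d}}\,s^{1/d}\,\bigl(f^\epsilon\bigr)^{**}(s)\Bigr)\,ds + (\textrm{term from }\|m\|_\infty),
\end{equation*}
with the constant $\lambda>1$ exactly absorbing the $\|m\|_\infty\mu(t)$ contribution (this is where (W1) with $\lambda>1$ rather than $\lambda=1$ is used). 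Since $(f^\epsilon)^{**}\le f^{**}$ pointwise (truncation decreases the decreasing rearrangement), (W1) makes the right-hand side finite and independent of $\epsilon$. Hence $\sup_\epsilon\|u^\epsilon\|_{L^\infty}\le C<\infty$.

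The third step is to pass to the limit: from the construction in the proof of Theorem \ref{thm:existence} we already know $u^\epsilon\to u$ (a.e., and in the relevant modes) where $u$ is the renormalized solution under consideration; a uniform $L^\infty$ bound on $u^\epsilon$ then passes to $u$ by Fatou or by lower semicontinuity of the $L^\infty$ norm under a.e. convergence, giving $u\in L^\infty(\Omega)$ with the same bound. I expect the main obstacle to be the rigorous justification of Step 1--Step 2 at the level of the \emph{inclusion} rather than an equation: one must argue that the measurable selection $\alpha^\epsilon\in A^\epsilon(\cdot,\nabla u^\epsilon)$ (equivalently, the Carathéodory function produced by the Minty transform) still satisfies the pointwise coercivity inequality used above and that the truncated test functions $T_h((|u^\epsilon|-t)^+)\,\mathrm{sgn}(u^\epsilon)$ are admissible in the approximate weak formulation — the latter requires $u^\epsilon$ to have enough a priori regularity (bounded, with $\nabla u^\epsilon$ in the appropriate Musielak--Orlicz space), which is guaranteed by the approximation scheme but must be invoked carefully. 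A secondary technical point is handling the anisotropy: the passage from $M_1(\nabla u^\epsilon)$ to a radial integrand must go through $(\widetilde{M_1})_{\bigstar}$ and Fenchel duality exactly as in \cite{Cianchi1, Alberico_Cianchi}, and one should check that the one-dimensional $N$-function $(M_1)_\blacklozenge$ appearing there is the same object defined before the theorem, so that $\Psi_\blacklozenge$ and hence condition (W1) match the integral obtained from the ODE analysis.
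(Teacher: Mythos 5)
Your proposal is correct and follows essentially the same route as the paper: test the approximate problems \eqref{approx_1}--\eqref{approx_2} (which are genuine equations with the Carath\'{e}odory $a^\epsilon$, so the "inclusion vs. equation" worry you flag at the end is automatically resolved) with the double truncation between levels $t$ and $t+h$, invoke \eqref{eq:epsilon_est} and the homogeneous minorant $M_1$ of Remark \ref{rem:homo_minorant}, apply the anisotropic P\'{o}lya--Szeg\"{o} inequality and coarea formula, divide by $h$ and send $h\to0$ to obtain the differential inequality for $\mu_{u_\epsilon}$, and integrate using (W1)--(W2) to conclude a uniform $L^\infty$ bound that passes to the a.e.\ limit. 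One small imprecision: your displayed intermediate estimate should live on the slice $\{t<|u^\epsilon|\le t+h\}$ (for both the $M_1$ integral and the $m$ integral), not on the full superlevel set $\{|u^\epsilon|>t\}$, and (W2) then gives $\int_{\{t<|u^\epsilon|\le t+h\}}m\le\|m\|_\infty(\mu(t)-\mu(t+h))$; the paper makes the "absorption by $\lambda>1$" precise via an auxiliary parameter $\beta>0$, splitting into the regime where the $m$-contribution dominates (giving the extra $(M_1)_\blacklozenge^{-1}(\cdot\|m\|_\infty)$ term in the final bound) and the regime where the $f$-contribution dominates (where $1+\beta=\lambda$ absorbs it into $\Psi_\blacklozenge^{-1}$), but this is the same mechanism you describe informally.
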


\begin{remark}
	Condition (W1) appeared in \cite[Section 3]{Cianchi1}, where, in particular, its discussion for particular cases of $M_1$, including the polynomial growth, is provided. 
\end{remark}
\begin{remark}
	It seems to us that the assumption (W2) is not optimal and the regularity of $m$ can be relaxed. The proof in Section \ref{sec:renorm_weak}, however, appears to require that $m\in L^\infty(\Omega)$. For now we leave open the question, whether the result holds with weaker assumptions on $m$. 
\end{remark}

\begin{remark}
	An important and hard question is whether the averaging procedure used to construct $(M_1)_\blacklozenge$ could be applied to nonhomogeneous $M$ directly, rather than to its homogenous (but anisotropic) minorant $M_1$. The key issue is whether the anisotropic P\'{o}lya--Szeg\"{o}  inequality, cf. \cite[Section 4]{Cianchi1}, can be generalized to such situation.
\end{remark}

\begin{remark}
	The proof of renormalized solution existence is based on the construction of  approximating problems and on compactness methods. In the proof of  Theorem \ref{thm:renorm_weak} $L^\infty$ bound  is obtained for the approximating problems and is proved to be uniform with respect to the approximation parameter $\epsilon$. Thus, only those renormalized solutions which are the limits of approximation scheme can enjoy the $L^\infty$ bound obtained in Theorem \ref{thm:renorm_weak}. Of course, under the assumptions of Theorem  \ref{thm:uniqueness} the unique solution has to be the limit of the approximating problems and thus, it is admissible. We leave open the question if one can work with the renormalized solution directly to obtain the result of  Theorem \ref{thm:renorm_weak}.
\end{remark}

We conclude this section by the remark on the weak solutions to the  problem \eqref{prblm_1}--\eqref{prblm_2}. 

\begin{definition}\label{def:weak}
	Let $f\in L^1(\Omega)$. A function $u\in V_0^M$ is called a \textit{weak solution} to the problem \eqref{prblm_1}--\eqref{prblm_2} if
	there exists a selection $\alpha\in L_{\widetilde{M}}(\Omega)$ of $A(\cdot,\nabla u(\cdot))$ (i.e. for a.e. $x\in \Omega$ there holds $\alpha(x)\in A(x,\nabla u(x))$) such that for  any 
	$w\in W^{1,\infty}_0(\Omega)$ there holds  
	\begin{equation}\label{weak}
	\int_{\Omega}\alpha \cdot \nabla w\, dx= \int_{\Omega}f w\, dx.
	\end{equation}
\end{definition} 
The next corollary is a straightforward consequence of Theroem \ref{thm:renorm_weak}. Indeed, if a renormalized solution $u$ belongs to $L^\infty(\Omega)$ the result follows by taking $h(s)$ in \eqref{renorm} equal to one for $-\|u\|_{L^\infty(\Omega)}\leq s \leq \|u\|_{L^\infty(\Omega)}$ and the truncation level $k$ in Definition \ref{def:renormalized} greater than $\|u\|_{L^\infty(\Omega)}$. 
\begin{corollary}
Under assumptions of Theorem \ref{thm:renorm_weak} if a renormalized solution $u$ to the problem \eqref{prblm_1}--\eqref{prblm_2} is obtained as the limit of solutions to approximative problems \eqref{approx_1}--\eqref{approx_2} then it is weak. 
\end{corollary}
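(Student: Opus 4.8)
\noindent The plan is to deduce this directly from Theorem~\ref{thm:renorm_weak}, exactly in the manner indicated in the paragraph preceding the statement: once $u\in L^\infty(\Omega)$, a renormalized solution becomes weak by choosing the renormalization function identically equal to $1$ on a neighbourhood of the essential range of $u$.

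By Theorem~\ref{thm:renorm_weak} (whose hypotheses are in force, since $u$ is assumed to be a limit of solutions to the approximative problems \eqref{approx_1}--\eqref{approx_2} and (W1)--(W2) hold) we have $u\in L^\infty(\Omega)$; set $K:=\|u\|_{L^\infty(\Omega)}$ and fix any $k>K$. Since $|u|\le K<k$ almost everywhere, the truncation satisfies $T_k(u)=u$ a.e., so item~1 of Definition~\ref{def:renormalized} gives $u=T_k(u)\in V_0^M\cap L^\infty(\Omega)$ with $\nabla u=\nabla T_k(u)\in L_M(\Omega)$. Moreover, by the remark following Definition~\ref{def:renormalized}, there is a selection $\alpha_k\in L_{\widetilde{M}}(\Omega)$ of $A(\cdot,\nabla T_k(u))=A(\cdot,\nabla u)$ with $\alpha_k\chi_{\{|u|<k\}}=\alpha\chi_{\{|u|<k\}}$; as $\{|u|<k\}=\Omega$ up to a null set, this yields $\alpha=\alpha_k\in L_{\widetilde{M}}(\Omega)$, a measurable selection of $A(\cdot,\nabla u(\cdot))$. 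Hence $u$ and $\alpha$ already satisfy all the structural requirements of Definition~\ref{def:weak}, and it remains only to verify the integral identity \eqref{weak}.

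To that end, pick $h\in C^1_c(\mathbb{R})$ with $h\equiv 1$ on $[-K-1,K+1]$ and supported in $[-K-2,K+2]$, so that $h'\equiv 0$ on $[-K,K]$. Because $|u|\le K$ a.e., this gives $h(u)=1$ a.e.\ and $h'(u)=0$ a.e.; in particular $h(u)\,w=w$ a.e.\ for every $w\in W^{1,\infty}_0(\Omega)$, whence $\nabla(h(u)\,w)=\nabla w$. Inserting this $h$ together with an arbitrary $w\in W^{1,\infty}_0(\Omega)$ into \eqref{renorm} and using $h(u)=1$ on the right-hand side, we obtain $\int_{\Omega}\alpha\cdot\nabla w\,dx=\int_{\Omega}f\,w\,dx$ (the right-hand side being finite, as $f\in L^1(\Omega)$ and $w\in L^\infty(\Omega)$), which is precisely \eqref{weak}. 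Therefore $u$ is a weak solution in the sense of Definition~\ref{def:weak}.

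The boundedness $u\in L^\infty(\Omega)$ is the only nontrivial input, and it is exactly the content of Theorem~\ref{thm:renorm_weak}, so I do not expect any genuine obstacle here. The single point deserving a line of care is the identity $\nabla(h(u)\,w)=\nabla w$: it is legitimate because for $k>K$ one has $T_k(u)=u\in W^{1,1}_0(\Omega)$, so $h(u)=h(T_k(u))$ is an honest composition of a $C^1_c$ function with a Sobolev function, its weak gradient equals $h'(T_k(u))\nabla T_k(u)=h'(u)\nabla u=0$, and no appeal to the generalized-gradient convention of the remark after Definition~\ref{def:renormalized} is needed. One may also note, for consistency, that condition~3 of Definition~\ref{def:renormalized}, equation~\eqref{renorm_control}, is automatic once $k>K$, since the set $\{k<|u|<k+1\}$ is then empty.
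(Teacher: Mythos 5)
Your proof is correct and matches the paper's approach exactly: invoke the $L^\infty$ bound from Theorem~\ref{thm:renorm_weak}, then choose $h\equiv 1$ on a neighbourhood of $[-\|u\|_{L^\infty},\|u\|_{L^\infty}]$ and a truncation level $k>\|u\|_{L^\infty}$, after which \eqref{renorm} reduces to \eqref{weak}. Your additional remarks (checking $\alpha\in L_{\widetilde M}(\Omega)$ and computing $\nabla(h(u)w)$) simply spell out details the paper leaves implicit.
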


\section{Multivalued term and its regularization.}\label{multivalued}

\noindent \textbf{Discussion of assumptions (A1)-(A3).} In this section we discuss the assumptions and properties of the multifunction $A$ which constitutes the leading term in the studied equation. 
First of all we remark that will use interchangeably the notation $A:\Omega\times \R^d \to 2^{\R^d}$ for a multifunction that takes $x\in\Omega$ and $\xi\in \R^d$ as its arguments and $A(x)$ as a graph of a multifunction leading from $\R^d$ to $2^{\R^d}$ given by $A(x)(\xi) = A(x,\xi)$. 

We will first show that our assumptions imply that the image of a bounded set through $A(x,\cdot)$ remains bounded. Indeed, for each $\xi\in \R^d$ and $\eta\in A(x,\xi)$
$$
-m(x)+c_A(M(x,\xi)+\widetilde{M}(x,\eta)) \leq \eta\cdot \xi \leq M\left(x,\frac{2\xi}{c_A}\right)+\widetilde{M}\left(x,\frac{c_A\eta}{2}\right).
$$ 
By convexity
$$
 c_A\widetilde{M}(x,\eta) \leq  m_2\left(\frac{2|\xi|}{c_A}\right)+\widetilde{M}\left(x,\frac{c_A\eta}{2}\right)+m(x)\leq m_2\left(\frac{2|\xi|}{c_A}\right)+\frac{c_A}{2}\widetilde{M}(x,\eta)+m(x).
$$ 
It follows that
$$
 \frac{c_A}{2}\widetilde{m_2}(|\eta|) \leq \frac{c_A}{2} \widetilde{M}(x,\eta) \leq  m_2\left(\frac{2|\xi|}{c_A}\right)+m(x).
$$
Because $\widetilde{m_2}(s)\neq 0$ for $s\neq 0$, the function $\widetilde{m_2}$ must be strictly increasing, hence it is invertible, with concave inverse $(\widetilde{m_2})^{-1}$. We get
\begin{equation}\label{eq:growth_bound}
 |\eta| \leq (\widetilde{m_2})^{-1}\left(\frac{2}{c_A}m_2\left(\frac{2|\xi|}{c_A}\right)+\frac{2}{c_A}m(x)\right)\leq C_1 \frac{2}{c_A}m_2\left(\frac{2|\xi|}{c_A}\right)+C_1\frac{2}{c_A}m(x) + C_2,
\end{equation}
where $C_1$ and $C_2$ are some nonnegative constants, the existence of which follows from the concavity of $(\widetilde{m_2})^{-1}$. Now, as $m_2$ is bounded on bounded sets we obtained the desired property for $A(x,\cdot)$. 

Assumption (A3) encompasses in one formula the coercivity and growth conditions typically assumed to get the solution existence for elliptic problems. Indeed, suppose that
 for a.e. $x\in \Omega$, every $\xi\in \R^d$ and $\eta\in A(x,\xi)$ there hold the two conditions  which are anisotropic and nonhomogeneous versions of the coercivity and growth conditions, respectively
\begin{align}
& c_A M(x,\xi) - m_A(x) \leq \eta\cdot\xi,\label{coerc}\\
& \widetilde{M}(x,\eta) \leq c_G M\left(x,\xi\right) + m_G(x),\label{growth}
\end{align}
with the constants $c_A, c_G > 0$ and $m_A, m_G\in L^1(\Omega)$. Then 
\begin{align*}
&\eta\cdot \xi \geq \frac{c_A}{2}M(x,\xi) + \frac{c_A}{2c_G}\widetilde{M}(x,\eta) - m_A(x)  - \frac{1}{c_G}m_G(x) \\
& \qquad \geq \frac{1}{2}\min\left\{1, c_A,\frac{c_A}{c_G}\right\}(M(x,\xi)+\widetilde{M}(x,\eta)) - m_A(x)  - \frac{1}{c_G}m_G(x) .
\end{align*}
Clearly, conditions \eqref{coerc} and \eqref{growth} imply (A3). On the other hand it is visible that (A3) implies \eqref{coerc}. As for \eqref{growth}, from (A3) we can deduce its weaker version
\begin{align}
& \widetilde{M}(x,\eta) \leq \frac{2}{c_A} M\left(x,\frac{2}{c_A}\xi\right) + \frac{2}{c_A}m(x).\label{weaker_growth}
\end{align}
Indeed, if (A3) holds, then, if only $\eta\in a(x,\xi)$,
\begin{align*}
& c_A(M(x,\xi) + \widetilde{M}(x,\eta)) \leq \xi\cdot\eta  + m(x)  \leq \widetilde{M}\left(x,\frac{c_A}{2}\eta\right) + M\left(x,\frac{2}{c_A}\xi\right) + m(x) \\
& \qquad \leq \frac{c_A}{2}\widetilde{M}\left(x,\eta\right) + M\left(x,\frac{2}{c_A}\xi\right) + m(x),
\end{align*}
and \eqref{weaker_growth} follows. 
If we suppose that $M$ satisfies $\Delta_2$ then (A3) becomes equivalent to \eqref{coerc}--\eqref{growth}. Indeed, $\Delta_2$ implies
$$
M\left(x,\frac{2}{c_A}\xi\right) \leq c_1 M(x,\xi) +h_1(x),
$$
with $c_1>0$ and $h_1\in L^1(\Omega)$, whence \eqref{weaker_growth} implies \eqref{growth}.

From the maximal monotonicity of $A(x,\cdot)$ it immediately follows that sets $A(x,\xi)$ must be closed (and hence compact, by boundedness) and convex for every $\xi\in \R^d$ and almost every $x\in \Omega$. Moreover the graphs $A(x)$ must be closed sets in $\R^d\times \R^d$. Finally, for every $\xi\in \R^d$ and a.e. $x\in \Omega$ the set $A(x,\xi)$ must be nonempty, cf. \cite[Corollary 12.39]{rockafellar}.

The fact that sets $A(x)$ are closed implies that assumption (A1) is equivalent to measurability of the graph of $A$, i.e. the set 
\begin{equation}\label{set:e}
\mathcal{E} = \{ (x,\xi,\eta)\in \Omega\times \R^d\times \R^d\, :\ \eta\in A(x,\xi)  \}
\end{equation}
in $\mathcal{L}(\Omega)\otimes \mathcal{B}(\R^d)\otimes \mathcal{B}(\R^d)$, cf. \cite[Theorem 1.3]{Chiado}. Moreover, this assumption implies that, by Aumann and von Neumann theorem, there exists a measurable (from $\mathcal{L}(\Omega)\otimes \mathcal{B}(\R^d) $ to $\mathcal{B}(\R^d)$) selection $a:\Omega\times \R^d \to \R^d$, cf. \cite[Theorem 1.4]{Chiado}.   

\medskip

\noindent \textbf{Regularization of $a$.} By $B(x_0,r)$ we denote the open Euclidean ball in $\R^d$. Let $\phi\in C^\infty_0(B(0,1))$ be a standard mollifier (i.e. a symmetric and nonnegative function such that $\int_{B(0,1)} \phi(s)\, ds = 1$) and let $\phi^\epsilon(s) = \epsilon^{-n}\phi(x/\epsilon)$. Then, of course, $\textrm{supp}\, \phi^\epsilon\subset B(0,\epsilon)$. Define
\begin{equation}\label{aeps}
a^\epsilon(x,\xi) = (a(x,\cdot)\ast \phi^\epsilon)(\xi) = \int_{B(0,\epsilon)}\phi^\epsilon(\lambda) a(x,\xi-\lambda)\, d\lambda.
\end{equation}
The following result summarizes the properties of $a^\epsilon$.
\begin{lemma}
The regularized function $a^\epsilon:\Omega \times \R^d\to \R^d$ satisfies the following properties
\begin{itemize}
\item[1.] $a^\epsilon$ is a Carath\'{e}odory function.
\item[2.] $a^\epsilon(x,\cdot)$ is maximally monotone for almost every $x\in \Omega$.
\item[3.]  For almost every $x \in \Omega$  and almost every $\xi\in \R^d$ we have the pointwise convergence $a^\epsilon(x,\xi)\to a(x,\xi)$ as $\epsilon\to 0$. 
\item[4.] There exists the nonnegative function $m\in L^1(\Omega)$ and the constant $c_A\in (0,1]$ (independent of $\epsilon$ but different than in (A3)) such that
\begin{equation}\label{eq:epsilon_est}
a^\epsilon(x,\xi)\cdot \xi \geq c_A(M(x,\xi)+\widetilde{M}(x,a^\epsilon(x,\xi))) - m(x)\ \textrm{for almost every}\
 x\in \Omega\ \textrm{and for every}\ \xi\in \R^d. 
\end{equation}
\end{itemize}
\end{lemma}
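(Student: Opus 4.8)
The plan is to verify the four items in order, since each builds on the structure already established for $a$ and $A$. For item 1, I would argue that for fixed $x$, the map $\xi\mapsto a^\epsilon(x,\xi)$ is continuous because it is a convolution of the (only measurable in $\xi$) function $a(x,\cdot)$ with a smooth compactly supported kernel; in fact it is $C^\infty$ in $\xi$ by differentiation under the integral sign, using that $a(x,\cdot)$ is locally bounded (this local boundedness is exactly the consequence of (A3) recorded in \eqref{eq:growth_bound}, applied to the selection $a$). For fixed $\xi$, measurability of $x\mapsto a^\epsilon(x,\xi)$ follows from the joint measurability of $a$ together with Fubini, approximating the integral in \eqref{aeps} by Riemann-type sums or invoking a standard measurability-of-parametrized-integrals lemma. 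Combining, $a^\epsilon$ is Carath\'eodory.

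For item 2, I would use the classical fact that the mollification (by a symmetric kernel) of a maximally monotone operator on $\R^d$ is again maximally monotone — indeed, convolving the graph with $\phi^\epsilon$ amounts to averaging translates $\xi\mapsto a(x,\xi-\lambda)+0$, each of which is monotone, so $a^\epsilon(x,\cdot)$ is monotone; maximality in finite dimensions then follows since $a^\epsilon(x,\cdot)$ is a continuous (single-valued, everywhere-defined) monotone map, and every such map on $\R^d$ is maximally monotone (see e.g. \cite[Example 12.7]{rockafellar}). Here one uses (A2), that $A(x,\cdot)$, and hence its selection-based regularization, inherits monotonicity for a.e.\ $x$; note that although $a$ need not be monotone pointwise, what matters is that $a(x,\cdot)$ takes values in the monotone graph $A(x,\cdot)$, and the averaging of the graph is what yields monotonicity of $a^\epsilon(x,\cdot)$ — I would phrase this carefully via: for $\xi_1,\xi_2$, $(a^\epsilon(x,\xi_1)-a^\epsilon(x,\xi_2))\cdot(\xi_1-\xi_2)=\int \phi^\epsilon(\lambda)(a(x,\xi_1-\lambda)-a(x,\xi_2-\lambda))\cdot((\xi_1-\lambda)-(\xi_2-\lambda))\,d\lambda\geq 0$ since $a(x,\cdot-\lambda)$ is a selection of the monotone $A(x,\cdot-\lambda)$. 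Item 3 is immediate from the Lebesgue differentiation theorem / standard mollification convergence: for a.e.\ $x$, $a(x,\cdot)\in L^1_{loc}(\R^d;\R^d)$ by \eqref{eq:growth_bound}, so $a^\epsilon(x,\xi)\to a(x,\xi)$ at every Lebesgue point of $a(x,\cdot)$, which is a.e.\ $\xi$; a measurability argument (the exceptional set is measurable in $(x,\xi)$) then gives the claim as stated.

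The main obstacle is item 4, the uniform coercivity/growth estimate \eqref{eq:epsilon_est}. The difficulty is that $a^\epsilon(x,\xi)$ is an average of values $a(x,\xi-\lambda)$ that satisfy (A3) at the shifted point $\xi-\lambda$, not at $\xi$, so one must transfer the estimate across the $\epsilon$-shift while keeping constants independent of $\epsilon$. My plan: first bound $|a^\epsilon(x,\xi)|$ using \eqref{eq:growth_bound} and Jensen's inequality — $|a^\epsilon(x,\xi)|\leq \int\phi^\epsilon(\lambda)|a(x,\xi-\lambda)|\,d\lambda \leq C_1\frac{2}{c_A}\int\phi^\epsilon(\lambda)m_2\!\big(\tfrac{2|\xi-\lambda|}{c_A}\big)d\lambda + C_1\tfrac{2}{c_A}m(x)+C_2$, and since $|\xi-\lambda|\leq |\xi|+\epsilon\leq |\xi|+1$ (for $\epsilon\leq 1$) and $m_2$ is nondecreasing, this is controlled by a ($\epsilon$-free) expression in $|\xi|$ plus $m(x)$. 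Next, for the lower bound on $a^\epsilon(x,\xi)\cdot\xi$, I write $a^\epsilon(x,\xi)\cdot\xi = \int\phi^\epsilon(\lambda)\,a(x,\xi-\lambda)\cdot(\xi-\lambda)\,d\lambda + \int\phi^\epsilon(\lambda)\,a(x,\xi-\lambda)\cdot\lambda\,d\lambda$; the first integral is $\geq c_A^{(A3)}\int\phi^\epsilon(\lambda)(M(x,\xi-\lambda)+\widetilde M(x,a(x,\xi-\lambda)))\,d\lambda - m(x)$ by (A3), while the second is an error term bounded by $\epsilon\,\sup_{|\lambda|\leq\epsilon}|a(x,\xi-\lambda)|$, again controlled via \eqref{eq:growth_bound}. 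To convert $\int\phi^\epsilon(\lambda)M(x,\xi-\lambda)\,d\lambda$ into $M(x,\xi)$ up to harmless losses, I would use convexity of $M(x,\cdot)$ in the form $M(x,\xi)\leq \tfrac12 M(x,2(\xi-\lambda)) + \tfrac12 M(x,2\lambda)$ (or the $\Delta_2$-free device $M(x,\xi)=M(x,(\xi-\lambda)+\lambda)\leq\frac12 M(x,2(\xi-\lambda))+\frac12 M(x,2\lambda)$) integrated against $\phi^\epsilon$, absorbing the $M(x,2\lambda)$ term — bounded for $|\lambda|\leq\epsilon\leq 1$ uniformly in $\epsilon$ — into a new $L^1$ function; a symmetric trick (now using convexity "backwards", i.e. $M(x,2(\xi-\lambda))$ dominated in terms of $M(x,\xi)$ requires care without $\Delta_2$, so instead I would start from $M(x,\xi-\lambda)$ directly and only ever pass to $M$ at the argument $\xi$ on the side where convexity is free). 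Finally, combining the lower bound, the upper bound on $\widetilde M(x,a^\epsilon(x,\xi))$ obtained from the growth estimate just proved together with the definition of $\widetilde M$, and shrinking the constant and enlarging $m$ as needed, yields \eqref{eq:epsilon_est} with a new $c_A\in(0,1]$ and a new nonnegative $m\in L^1(\Omega)$, both independent of $\epsilon$. I expect the bookkeeping of which convexity inequality is "free" (no $\Delta_2$) versus which would illegitimately require $\Delta_2$ to be the subtle point that must be handled with care.
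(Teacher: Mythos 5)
Items 1--3 of your proposal match the paper's argument closely: continuity in $\xi$ from differentiating under the integral sign (local boundedness of $a(x,\cdot)$ coming from \eqref{eq:growth_bound}), measurability in $x$ via Fubini, monotonicity of $a^\epsilon(x,\cdot)$ from exactly the identity you write for $(a^\epsilon(x,\xi_1)-a^\epsilon(x,\xi_2))\cdot(\xi_1-\xi_2)$ plus the fact that a continuous single-valued monotone map on $\R^d$ is automatically maximal, and pointwise convergence from Lebesgue points.

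For item 4, you correctly flag the $\Delta_2$-free bookkeeping as the subtle point, but you do not resolve it, and both workarounds you sketch have gaps. The mechanism you are missing is Jensen's inequality combined with the \emph{symmetry} of the mollifier $\phi$. Because $\phi^\epsilon$ is an even probability density and $M(x,\cdot)$, $\widetilde M(x,\cdot)$ are convex,
\[
\int_{B(0,\epsilon)}\phi^\epsilon(\lambda)\,M(x,\xi-\lambda)\,d\lambda \;\ge\; M\Bigl(x,\int_{B(0,\epsilon)}\phi^\epsilon(\lambda)(\xi-\lambda)\,d\lambda\Bigr) = M(x,\xi),
\]
since $\int\phi^\epsilon(\lambda)\lambda\,d\lambda=0$, and likewise $\int\phi^\epsilon(\lambda)\,\widetilde M(x,a(x,\xi-\lambda))\,d\lambda \ge \widetilde M(x,a^\epsilon(x,\xi))$. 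There is no doubling, no constant loss, and no $\Delta_2$. Your route via $M(x,\xi)\le\tfrac12M(x,2(\xi-\lambda))+\tfrac12M(x,2\lambda)$ produces $M(x,2(\xi-\lambda))$ where you need $M(x,\xi-\lambda)$; undoing the factor $2$ is exactly the $\Delta_2$ step you already note you cannot afford.

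Your treatment of the cross term is also problematic. Bounding $\bigl|\int\phi^\epsilon(\lambda)a(x,\xi-\lambda)\cdot\lambda\,d\lambda\bigr|$ by $\epsilon\sup_{|\lambda|\le\epsilon}|a(x,\xi-\lambda)|$ and then invoking \eqref{eq:growth_bound} leaves a term of size $\epsilon\,m_2(C(|\xi|+\epsilon))$; this depends on $\xi$, so it cannot be absorbed into the $\xi$-independent $m(x)$ in \eqref{eq:epsilon_est}, and it cannot be dominated by $c\,M(x,\xi)$ because $m_2$ is an \emph{upper} bound for $M$, not a lower one. The paper instead applies Fenchel--Young at the level of the integrand,
\[
a(x,\xi-\lambda)\cdot\lambda \ge -\tfrac{c_A}{2}\widetilde M(x,a(x,\xi-\lambda)) - M\bigl(x,\tfrac{2}{c_A}\lambda\bigr),
\]
so half the $\widetilde M$ term furnished by (A3) absorbs the loss, while $M(x,\tfrac{2}{c_A}\lambda)\le m_2(\tfrac{2}{c_A}\epsilon)\le m_2(\tfrac{2}{c_A})$ is a uniform constant for $|\lambda|\le\epsilon\le 1$. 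Finally, your plan to derive \eqref{eq:epsilon_est} by separately obtaining an upper bound on $\widetilde M(x,a^\epsilon(x,\xi))$ from \eqref{eq:growth_bound} would amount to a growth estimate $\widetilde M(x,a^\epsilon(x,\xi))\le c\,M(x,\xi)+h(x)$, which is precisely what is \emph{not} available $\Delta_2$-free (compare \eqref{weaker_growth} vs.\ \eqref{growth} in Section~\ref{multivalued}); the Jensen step on $\widetilde M$ bypasses this entirely.
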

\begin{proof}
Clearly, $a^\epsilon(x,\cdot)\in C^\infty(\R^d)$. We prove that $a^\epsilon(\cdot,\xi)$ is measurable for every $\xi\in \R^d$. As $a^\epsilon(x,\cdot)$ is continuous, it is sufficient to prove the assertion for almost every $\xi\in \R^d$. By the Fubini--Tonelli theorem it suffices to show that for a fixed $\xi\in \R^d$ the function $(x,\lambda)\mapsto \phi^\epsilon(\lambda)a(x,\xi-\lambda)$ is summable over  $\Omega\times B(0,\epsilon)$. Indeed,
$$
|\phi^\epsilon(\lambda)a(x,\xi-\lambda)| \leq \frac{C}{\epsilon^{n}}\left(m_2\left(C(|\xi|+\epsilon)\right) + m(x)+1\right),
$$
and the assertion follows. To prove 2. it is enough to verify that $a^\epsilon(x,\cdot)$ is monotone, as we already know that it is continuous and single-valued. We have
$$
(a^\epsilon(x,\xi) - a^\epsilon(x,\eta))\cdot (\xi-\eta) = \int_{B(0,\epsilon)}(a(x,\xi-\lambda) - a(x,\eta-\lambda))\cdot ((\xi-\lambda)-(\eta-\lambda))\phi^\epsilon(\lambda)\, d\lambda \geq 0.
$$ 
The assertion 3. is clear. To prove 4. we use the Young and the Fenchel--Young inequalities and we estimate
\begin{align*}
& a^\epsilon(x,\xi)\cdot \xi = \int_{B(0,\epsilon)}\phi^{\epsilon}(\lambda)a(x,\xi-\lambda)\cdot (\xi-\lambda+\lambda)\, d\lambda\\
&\qquad  \geq \int_{B(0,\epsilon)}\phi^{\epsilon}(\lambda)\left(c_A(M(x,\xi-\lambda)+\widetilde{M}(x,a(x,\xi-\lambda)))-m(x)+a(x,\xi-\lambda)\cdot \lambda\right)\, d\lambda \\
& \qquad \geq  \int_{B(0,\epsilon)}\phi^{\epsilon}(\lambda)\left(c_AM(x,\xi-\lambda)+\frac{c_A}{2}\widetilde{M}(x,a(x,\xi-\lambda))-M\left(x,\frac{2}{c_A}\lambda\right)\right)\, d\lambda -m(x) \\
&\qquad \geq c_AM\left(x,\int_{B(0,\epsilon)}\phi^{\epsilon}(\lambda)(\xi-\lambda)\, d\lambda\right)+\frac{c_A}{2}\widetilde{M}\left(x,\int_{B(0,\epsilon)}\phi^{\epsilon}(\lambda)a(x,\xi-\lambda)\, d\lambda\right)\\
&\qquad\qquad -\int_{B(0,\epsilon)}\phi^{\epsilon}(\lambda)m_2\left(\frac{2}{c_A}|\lambda|\right)\, d\lambda -m(x)\\
& \qquad \geq c_AM\left(x,\xi\right)+\frac{c_A}{2}\widetilde{M}\left(x,a^\epsilon(x,\xi)\right) -m_2\left(\frac{2}{c_A}\right) -m(x).
\end{align*}
The proof is complete.
\end{proof}
\begin{remark}
	Note that $m$ for $a^\epsilon$ does not depend on $\epsilon$ and is strictly greater than the corresponding function for $A$. Hence one can choose the same $m$ both for  $A$, and for  $a^\epsilon$. 
\end{remark}
\medskip

\noindent \textbf{Minty transformation and its properties.} Following \cite{Francfort} for almost every $x\in \Omega$ we define the mapping 
$\varphi_{A(x)}:\R^d\to 2^{\R^d}$ by the following \textit{Minty transformation}
\begin{equation}\label{minty_tr}
\mu \in \varphi_{A(x)}(\nu)\  \Leftrightarrow \ \exists (\xi,\eta)\in A(x)\, :\ \nu=\xi+\eta, \mu =\eta-\xi.
\end{equation}

The following Lemma was proved in \cite[Lemma 2.1]{Francfort}.
\begin{lemma}
The domain of $\varphi_{A(x)}$ is the whole $\R^d$, its values are singletons, and it is $1$-Lipschitz, i.e., 
$$
|\varphi_{A(x)}(\nu_1)-\varphi_{A(x)}(\nu_2)| \leq |\nu_1-\nu_2|\quad \textrm{for every}\quad \nu_1,\nu_2\in \R^d. 
$$ 
\end{lemma}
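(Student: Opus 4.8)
The statement to be proved is the structural lemma from \cite[Lemma 2.1]{Francfort}: for a.e.\ fixed $x\in\Omega$, the Minty transform $\varphi_{A(x)}$ defined by \eqref{minty_tr} has domain all of $\R^d$, is single-valued, and is $1$-Lipschitz. The plan is to fix $x$ in the full-measure set where $A(x,\cdot)$ is maximally monotone (assumption (A2)) and argue purely at the level of the monotone graph $G := A(x)\subset\R^d\times\R^d$, writing $R_\theta$ for the clockwise rotation of $\R^{2d}$ by $45^\circ$ that sends $(\xi,\eta)\mapsto(\xi+\eta,\eta-\xi)$ (up to the harmless normalising factor $\tfrac1{\sqrt2}$); thus $\operatorname{graph}\varphi_{A(x)} = R_\theta(G)$ and every claim about $\varphi_{A(x)}$ becomes a geometric claim about this rotated graph.

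First I would establish single-valuedness and the Lipschitz bound simultaneously, since they are the same computation. Take $(\xi_1,\eta_1),(\xi_2,\eta_2)\in G$ and set $\nu_i=\xi_i+\eta_i$, $\mu_i=\eta_i-\xi_i$. A direct expansion gives the polarisation identity $|\nu_1-\nu_2|^2 - |\mu_1-\mu_2|^2 = 4(\eta_1-\eta_2)\cdot(\xi_1-\xi_2)$, which is $\geq 0$ by monotonicity of $A(x,\cdot)$. Hence $|\mu_1-\mu_2|\le|\nu_1-\nu_2|$: this is the $1$-Lipschitz estimate, and in particular if $\nu_1=\nu_2$ then $\mu_1=\mu_2$, so $\varphi_{A(x)}$ is single-valued on its domain. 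Note this half uses only monotonicity, not maximality.

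The remaining, and genuinely substantive, point is that $\operatorname{dom}\varphi_{A(x)}=\R^d$, i.e.\ that $R_\theta(G)$ projects onto all of $\R^d$ in the $\nu$-variable. Equivalently, for every $\nu\in\R^d$ one must solve $\xi+\eta=\nu$ with $\eta\in A(x,\xi)$, i.e.\ $\nu-\xi\in A(x,\xi)$, i.e.\ $0\in A(x,\xi)+\xi-\nu$. This is exactly a surjectivity statement for $A(x,\cdot)+\Id$, which is the classical Minty--Browder theorem: a monotone operator on $\R^d$ is maximal if and only if $A(x,\cdot)+\Id$ is surjective (indeed bijective). So here I would simply invoke maximal monotonicity of $A(x,\cdot)$ — equivalently cite Minty's theorem or \cite[Corollary 12.39]{rockafellar}, already referenced in the paper — to conclude that for each $\nu$ such a $\xi$ exists. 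I expect this surjectivity step to be the main obstacle in the sense that it is the only place maximality (as opposed to mere monotonicity) is used and the only place a nontrivial external theorem is needed; everything else is the elementary polarisation identity above. Finally, single-valuedness of $\varphi_{A(x)}$ combined with its graph being $R_\theta(G)$ with $G$ closed (the graph of a maximal monotone operator is closed) gives that $\varphi_{A(x)}$ is an everywhere-defined, single-valued, $1$-Lipschitz — hence continuous — map, completing the proof.
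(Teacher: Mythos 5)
Your argument is correct; the paper itself does not prove this lemma but defers to \cite[Lemma 2.1]{Francfort}, and your proof is the standard one given there: the polarisation identity $|\nu_1-\nu_2|^2-|\mu_1-\mu_2|^2=4(\xi_1-\xi_2)\cdot(\eta_1-\eta_2)\geq 0$ yields single-valuedness and the $1$-Lipschitz bound from monotonicity alone, while Minty's surjectivity theorem (maximality $\Leftrightarrow$ surjectivity of $A(x,\cdot)+\Id$) gives $\operatorname{dom}\varphi_{A(x)}=\R^d$. You correctly identify the surjectivity step as the only place maximality enters and the only nontrivial external input.
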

We can define the function $\varphi_A:\Omega\times \R^d\to \R^d$ by the formula $\varphi_A(x,\nu) = \varphi_{A(x)}(\nu)$. Following \cite[Remark 2.2]{Francfort} we prove the following lemma.
\begin{lemma}
The function $\varphi_A$ is Carath\'{e}odory.
\end{lemma}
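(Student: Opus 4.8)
The goal is to show that $\varphi_A:\Omega\times\R^d\to\R^d$ is a Carath\'{e}odory function, i.e.\ that $\varphi_A(x,\cdot)$ is continuous for a.e.\ $x\in\Omega$ and that $\varphi_A(\cdot,\nu)$ is measurable for every $\nu\in\R^d$. The continuity in the second variable is already furnished by the previous lemma (indeed $\varphi_{A(x)}$ is $1$-Lipschitz), so the only thing to establish is the measurability of $x\mapsto\varphi_A(x,\nu)$ for fixed $\nu$.

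The plan is to exploit the characterization of the Minty transform through the graph $\mathcal{E}$ defined in \eqref{set:e}. Fix $\nu\in\R^d$. By definition \eqref{minty_tr}, $\varphi_A(x,\nu)=\mu$ is the \emph{unique} $\mu\in\R^d$ for which there is a pair $(\xi,\eta)\in A(x)$ with $\xi+\eta=\nu$ and $\eta-\xi=\mu$; equivalently, writing $\xi=\tfrac{\nu-\mu}{2}$, $\eta=\tfrac{\nu+\mu}{2}$, the value $\varphi_A(x,\nu)$ is the unique $\mu$ such that $\tfrac{\nu+\mu}{2}\in A\!\left(x,\tfrac{\nu-\mu}{2}\right)$. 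Since $A$ has measurable graph (assumption (A1), reformulated as $\mathcal{E}\in\mathcal{L}(\Omega)\otimes\mathcal{B}(\R^d)\otimes\mathcal{B}(\R^d)$ via \cite[Theorem 1.3]{Chiado}), I would consider the set
\[
G_\nu=\Bigl\{(x,\mu)\in\Omega\times\R^d\ :\ \bigl(x,\tfrac{\nu-\mu}{2},\tfrac{\nu+\mu}{2}\bigr)\in\mathcal{E}\Bigr\}.
\]
This is the preimage of $\mathcal{E}$ under the map $(x,\mu)\mapsto\bigl(x,\tfrac{\nu-\mu}{2},\tfrac{\nu+\mu}{2}\bigr)$, which is continuous (affine) in $\mu$ and the identity in $x$, hence $(\mathcal{L}(\Omega)\otimes\mathcal{B}(\R^d))/(\mathcal{L}(\Omega)\otimes\mathcal{B}(\R^d)\otimes\mathcal{B}(\R^d))$-measurable; therefore $G_\nu\in\mathcal{L}(\Omega)\otimes\mathcal{B}(\R^d)$. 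By the previous lemma the section $(G_\nu)_x=\{\mu:(x,\mu)\in G_\nu\}$ is a singleton $\{\varphi_A(x,\nu)\}$ for a.e.\ $x$. A set with measurable graph whose $x$-sections are (a.e.) singletons is the graph of a measurable function: this is the Aumann--von Neumann measurable selection theorem (cf.\ \cite[Theorem 1.4]{Chiado}), which applied to the multifunction $x\mapsto(G_\nu)_x$ yields a measurable selection that must coincide a.e.\ with $x\mapsto\varphi_A(x,\nu)$ since the sections are singletons. Hence $\varphi_A(\cdot,\nu)$ is measurable.

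Combining the two coordinates: $\varphi_A(\cdot,\nu)$ is measurable for every $\nu\in\R^d$ and $\varphi_A(x,\cdot)$ is continuous (even $1$-Lipschitz) for a.e.\ $x$, so $\varphi_A$ is a Carath\'{e}odory function. (As a minor technical point, one could also invoke the standard fact that a function on $\Omega\times\R^d$ that is measurable in $x$ and continuous in $\nu$ is automatically jointly $\mathcal{L}(\Omega)\otimes\mathcal{B}(\R^d)$-measurable, obtained by approximating $\nu\mapsto\varphi_A(x,\nu)$ by its values on a countable dense set, though this is part of what being Carath\'{e}odory already means.)

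The main obstacle is the measurability step, and specifically making rigorous the passage ``measurable graph with singleton sections $\Rightarrow$ measurable function''. Here the $1$-Lipschitz/single-valuedness conclusion of the preceding lemma is essential: without it $x\mapsto(G_\nu)_x$ would only be a measurable multifunction and we would merely get a measurable selection rather than the measurability of $\varphi_A$ itself. Everything else---the continuity in $\nu$, the affine change of variables identifying $G_\nu$ with a slice of $\mathcal{E}$, and the reformulation of (A1) as measurability of $\mathcal{E}$---is routine and already recorded in the excerpt.
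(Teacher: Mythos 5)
Your proof is correct, and it follows the same basic strategy as the paper's — both hinge on the measurability of the graph $\mathcal{E}$ from (A1) and the affine change of variables $(x,\nu,\mu)\mapsto\bigl(x,\tfrac{\nu-\mu}{2},\tfrac{\nu+\mu}{2}\bigr)$ — but your route through the measurability step is genuinely different. The paper pulls back $\mathcal{E}$ to the full graph $\mathcal{F}=\{(x,\nu,\mu):\mu=\varphi_A(x,\nu)\}\subset\Omega\times\R^d\times\R^d$, invokes \cite[Theorem 1.3]{Chiado} to get \emph{joint} $\mathcal{L}(\Omega)\otimes\mathcal{B}(\R^d)$-measurability of $\varphi_A$, and then descends to measurability of each section $\varphi_A(\cdot,\nu)$ via \cite[Theorem 1.2]{Chiado}. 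You instead freeze $\nu$, pull back $\mathcal{E}$ to the two-variable slice $G_\nu\subset\Omega\times\R^d$, and then apply Aumann--von Neumann (\cite[Theorem 1.4]{Chiado}) to the multifunction $x\mapsto (G_\nu)_x$, using the nonempty-singleton property from the preceding lemma to identify the selection with $\varphi_A(\cdot,\nu)$. Your version bypasses the joint-measurability intermediate step and goes straight for sectional measurability, at the price of quoting the selection theorem; the paper's version gets joint measurability for free as a byproduct, which is aesthetically tidier though not strictly needed once continuity in $\nu$ is in hand. Both arguments are sound and rest on the same inputs: measurability of $\mathcal{E}$, the affine substitution, and the single-valuedness/full-domain conclusion of the preceding lemma.
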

\begin{proof}
Define the set
$$
\mathcal{F} = \{ (x,\nu,\mu)\in \Omega\times \R^d\times \R^d\,:\ \mu = \varphi_A(x,\nu) \}.
$$
Clearly, $\mathcal{F} = \Psi^{-1}(\mathcal{E})$, where
$$
\Psi(x,\nu,\mu) = \left(x,\frac{\nu-\mu}{2},\frac{\nu+\mu}{2}\right).
$$
It follows that $\mathcal{F}$ belong to the $\sigma$-algebra $\mathcal{L}(\Omega)\otimes \mathcal{B}(\R^d)\otimes \mathcal{B}(\R^d)$. By \cite[Theorem 1.3]{Chiado} we deduce that $\varphi_A$ is measurable with respect to the $\sigma$-algebra $\mathcal{L}(\Omega)\otimes \mathcal{B}(\R^d)$ on the domain and $\mathcal{B}(\R^d)$ on its range. Now, \cite[Theorem 1.2]{Chiado} implies that $\varphi_A(\cdot,\lambda)$ is measurable, and, in consequence, $\varphi_A$ in Carath\'{e}odory.
\end{proof}

\section{Proof of Theorem \ref{thm:existence}: existence.}\label{sec:ex_proof}

The proof is an adaptation to the case of multivalued leading term $A$ of the corresponding results from \cite{gwiazda_wittbold,gwiazda_corrigendum} and \cite{gwiazda_skrzypczak}. The main difficulty, which does not appear in \cite{gwiazda_wittbold,gwiazda_corrigendum,gwiazda_skrzypczak} is the fact that the methodology based on Young measures requires the mapping $\xi \mapsto a(x,\xi)$ to be continuous, which does not hold in our case, and hence we need to use the Minty transform to make it possible to use the Young measure techniques. The proof of the existence result consists of ten steps.  
\medskip

\noindent \textit{Step 1. Approximate problem.} We define the truncated problem
\begin{align}
& -\textrm{div}\, a^\epsilon(x,\nabla u_\epsilon) = T_{1/\epsilon}f \qquad \textrm{in}\qquad \Omega,\label{approx_1}\\
& u_{\epsilon}(x) = 0\qquad \textrm{on}\qquad \partial\Omega. \label{approx_2}
\end{align}
Existence of a weak solution  $u_\epsilon \in V_0^M$ with $a^\epsilon(\cdot, \nabla u_\epsilon) \in L_{\widetilde{M}}(\Omega)$ to the above problem follows from \cite[Theorem 1.5]{gwiazda_minakowski} (also see \cite[Theorem 2.1]{gwiazda_skrzypczak}) if (C2) holds and from \cite[Theorem 4.1, Proposition 4.3]{gwiazda_wittbold, gwiazda_corrigendum} if (C1) holds.  In both cases   and  for every test function $w\in C_0^{\infty}(\Omega)$ there holds
\begin{equation}\label{eq:weak}
\int_{\Omega}a^\epsilon(x,\nabla u_\epsilon)\cdot \nabla w\, dx = \int_{\Omega} T_{1/\epsilon}f w\, dx.
\end{equation}
 Note that in \cite[Theorem 1.5]{gwiazda_minakowski} the case $m\equiv 0$ is considered but the argument there can be easily generalized to the case of nonnegative function $m\in L^1(\Omega)$. We also stress that in case (C1) in \cite[Theorem 4.1, Proposition 4.3]{gwiazda_wittbold, gwiazda_corrigendum}
 the existence of the weak solution to the above problem is proved using the Galerkin method where the finite dimensional space is spanned by the eigenfuctions of $-\Delta$ with Dirichlet boundary conditions, and hence the approximative sequence $\{ u_{\epsilon m}\}_{m=1}^\infty$ constructed in the proof has regularity $C^{\infty}_0(\Omega)$. 

\medskip
 
 \noindent \textit{Step 2. A priori estimates.} We can test \eqref{eq:weak} with $w=T_{k}u_\epsilon$, for any $k>0$. 
 Indeed, if (C1) holds then, in \cite[Section 5.1]{gwiazda_wittbold} it has been proved using the Galerkin argument that there exists a sequence $\{ u_{\epsilon m} \}_{m=1}^\infty \subset C_0^\infty(\Omega)$ of smooth functions 
 such that 
 \begin{equation}\label{eq:galerkin_conv}
 u_{\epsilon m} \xrightarrow{\textrm{strongly in}\ L^1(\Omega)} u_\epsilon\quad \textrm{and}\quad\nabla u_{\epsilon m}\xrightarrow{\textrm{weakly}-*\ \textrm{in}\ L_M(\Omega)} \nabla u_\epsilon.
 \end{equation}
 We first show that \eqref{eq:weak} holds for $w$ replaced with $T_k w$. Indeed, let $ \{ T_{k,\delta} \}_{\delta}$ be a sequence of smooth functions which converge pointwise to $T_k$ and for every $s\in \R$ there holds $|T_{ k,\delta } s| \leq |T_k s|$ and $T_{ k,\delta }'(s)\leq 1$. Then
$$
 \int_{\Omega}a^\epsilon(x,\nabla u_\epsilon)\cdot \nabla T_{k,\delta} w\, dx = \int_{\Omega} T_{1/\epsilon} f T_{k,\delta} w\, dx.
$$
 We can pass to the limit $\delta\to 0$ on the right-hand side using the Lebesgue dominated convergence theorem. To pass to the limit on the left-hand side first take $a\in C^\infty(\Omega)^d$. There holds
  $$
  \int_{\Omega}a\cdot \nabla T_{k,\delta} w\, dx = -\int_{\Omega} \textrm{div} a\, T_{k,\delta} w\, dx 
  \xrightarrow{\delta \to 0} 
  -\int_{\Omega} \textrm{div} a\, T_{k} w\, dx = \int_{\Omega}a\cdot \nabla T_{k} w\, dx.
  $$
  Now, take $a \in L_{\widetilde{M}}(\Omega) = E_{\widetilde{M}}(\Omega)$. By Theorem \ref{thm:density} there exists a sequence $a_\eta \in C^\infty(\Omega)^d$ converging to $a$ in $L_{\widetilde{M}}(\Omega)$. Hence
   $$
  \int_{\Omega}a\cdot \nabla T_{k,\delta}w\, dx = \int_{\Omega}a\cdot \nabla T_{k}w\, dx + \int_{\Omega}(a-a_\eta)\cdot (\nabla T_{k,\delta}w-\nabla T_{k}w)\, dx + \int_\Omega a_\eta\cdot (\nabla T_{k,\delta}w-\nabla T_{k}w)\, dx.
  $$
  So, by the generalized H\"{o}lder inequality, cf. Lemma \ref{lem:hol},
  \begin{align*}
& \limsup_{\delta\to 0} \left|\int_{\Omega}a\cdot \nabla T_{k,\delta}w\, dx  -\int_{\Omega}a\cdot \nabla T_{k}w\, dx\right|\\
& \qquad  \leq  \limsup_{\delta\to 0} 2\|a-a_\eta\|_{L_{\widetilde{M}}(\Omega)}(\|\nabla T_{k,\delta}w\|_{L_M(\Omega)}+\|\nabla T_{k}w\|_{L_M(\Omega)}).
  \end{align*}
  Now 
  $$
  M(x,\nabla T_{k,\delta}w) \leq M(x, \nabla w) \leq m_2\left(\sup_{x\in \Omega}|\nabla w(x)|\right).
  $$
  Hence
  $$
  \int_{\Omega}a\cdot \nabla T_{k,\delta}w\, dx   \to \int_{\Omega}a\cdot \nabla T_{k}w\, dx\quad \textrm{for every}\quad a\in E_{\widetilde{M}}(\Omega),
  $$
  and 
  $$
  \int_{\Omega}a^\epsilon(x,\nabla u_\epsilon)\cdot \nabla T_{k} w\, dx = \int_{\Omega} T_{1/\epsilon} f T_{k} w\, dx.
  $$
  We can take $w=u_{\epsilon m}$, the approximative sequence from \eqref{eq:galerkin_conv}, and pass to the limit with $m$ to infinity. Passing to the limit on the right-hand side is again straightforward. Hence
  $$
  \lim_{m\to\infty}\int_{\Omega}a^\epsilon(x,\nabla u_\epsilon)\cdot \nabla T_{k} u_{\epsilon m}\, dx = \int_{\Omega} T_{1/\epsilon} f T_{k} u_\epsilon\, dx,
  $$
  and we can pass to the limit in the left-hand side from density of $C^\infty(\Omega)^d$ in $E_{\widetilde{M}}(\Omega)$ and a priori estimates
  $$
  \int_{\Omega} M(x,\nabla T_k u_{\epsilon m})\, dx \leq \int_{\Omega} M(x,\nabla u_{\epsilon m})\, dx \leq C,
  $$
 derived in  \cite[Section 5.1]{gwiazda_wittbold}, where a constant $C$ does not depend on $m$.  
 In turn, if (C2) holds, then, as 
 $T_{k} u_\epsilon \in V_0^M \cap L^\infty(\Omega)$, by Theorem \ref{thm:approx} we can approximate $T_{k} u_\epsilon$ by a sequence of functions $\{ u_{\epsilon k m} \}_{m=1}^\infty\subset C_0^\infty(\Omega)$ such that $$
 u_{\epsilon k m} \xrightarrow{\textrm{strongly in}\ L^1(\Omega)} T_k u_\epsilon\quad \textrm{and moreover}\quad  \nabla u_{\epsilon k m} \xrightarrow{M} \nabla T_{k}(u_\epsilon).$$ 
 We can substitute $u_{\epsilon k m}$ as the test function in \eqref{eq:weak} and pass to the limit $m\to \infty$ by  Lemma \ref{lem:modular}. In both cases (C1) and (C2) we obtain
\begin{equation}\label{eq:test}
\int_{\Omega}a^\epsilon(x,\nabla T_{k} u_\epsilon)\cdot \nabla T_{k} u_\epsilon\, dx = \int_{\Omega} T_{1/\epsilon} f T_{k} u_\epsilon\, dx.
\end{equation}
It follows that 
\begin{align}
& \int_{\Omega}M(x,\nabla T_k u_\epsilon)\, dx \leq C ( \|m\|_{L^1(\Omega)} + k \|f\|_{L^1(\Omega)}),\label{conv:1}\\
& \int_{\Omega}\widetilde{M}(x,a^\epsilon(x,\nabla T_k u_\epsilon))\, dx \leq C ( \|m\|_{L^1(\Omega)} + k \|f\|_{L^1(\Omega)}).\label{conv:2}
\end{align}
We also deduce that
$$
\int_{\Omega}m_1(|\nabla T_k u_\epsilon|)\, dx \leq C ( \|m\|_{L^1(\Omega)} + k \|f\|_{L^1(\Omega)}),
$$
and, by Theorem \ref{thm:convergence},
$$
\int_{\Omega}m_1\left(\frac{| T_k u_\epsilon|}{\lambda}\right)\, dx \leq C ( \|m\|_{L^1(\Omega)} + k \|f\|_{L^1(\Omega)}).
$$
It follows that
$$
\int_{\Omega}|\nabla T_k u_\epsilon|\, dx \leq C ( 1 + k )\quad \textrm{and}\quad  \int_{\Omega}| T_k u_\epsilon|\, dx \leq Ck ,
$$
and hence the sequence $\{ T_k u_\epsilon \}_{\epsilon>0}$ is bounded in $W^{1,1}(\Omega)$. Moreover, the sequences $\{ \nabla T_k u_\epsilon \}_{\epsilon>0}$ and  $ \{ a^{\epsilon}(x,\nabla T_k u_\epsilon) \}_{ \epsilon > 0}$ are uniformly integrable. 

\bigskip

\noindent \textit{Step 3. Controlled radiation.}
We first estimate the Lebesgue measure of the set
$
\{  x\in \Omega\,:\ |u_\epsilon(x)| \geq k \}.
$ By Theorem \ref{thm:convergence} there holds
\begin{align*}
& |\{  x\in \Omega\,:\ |u_\epsilon(x)| \geq k \}| \leq \int_{\Omega}\frac{m_1\left(\frac{|T_k u_\epsilon|}{\lambda}\right)}{m_1\left(\frac{k}{\lambda}\right)}\, dx\leq \frac{C}{m_1\left(\frac{k}{\lambda}\right)}\int_{\Omega}m_1(|\nabla T_k u_\epsilon|)\, dx\\
& \qquad \leq \frac{C}{m_1\left(\frac{k}{\lambda}\right)}\int_{\Omega}M(x,\nabla T_k u_\epsilon)\, dx \leq \frac{C}{m_1\left(\frac{k}{\lambda}\right)}(\|m\|_{L^1(\Omega)}+k\|f\|_{L^1(\Omega)})
\end{align*}
So, there exists a continuous function $H:(0,\infty)\to [0,\infty)$ such that $\lim_{k\to \infty} H(k) = 0$ and $$|\{  x\in \Omega\,:\ |u_\epsilon(x)| \geq k \}| \leq H(k)$$ for every $k > 0$ and $\epsilon \in (0,1)$. We deduce that
$$
\int_{\{|u_\epsilon| \geq k\}} |f(x)|\, dx \leq \omega(|\{  x\in \Omega\,:\ |u_\epsilon(x)| \geq k \}|) \leq \omega(H(k)) := \gamma(k),
$$
where $\gamma:(0,\infty)\to [0,\infty)$ is a continuous function such that $\lim_{k\to \infty} \gamma(k) = 0$. Now, define 
$$\psi_k(r) = T_{k+1}(r) - T_{k}(r) = \begin{cases}
-1 & \quad \textrm{if}\quad r \leq -k-1,\\
r+k & \quad \textrm{if}\quad -k-1<r < -k,\\
0 & \quad \textrm{if}\quad |r|\leq k,\\
r-k & \quad \textrm{if}\quad k< r < k+1,\\
1 & \quad \textrm{if}\quad k+1 \leq r.
\end{cases}$$
Subtracting \eqref{eq:test} written for $k+1$ and for $k$ we obtain
$$
\int_{\Omega} a^\epsilon(x,\nabla u_\epsilon)\cdot \nabla \psi_k(u_\epsilon)\, dx = \int_{\Omega} T_{1/\epsilon} f\psi_k(u_\epsilon)\, dx.
$$
Hence
\begin{align*}
&\int_{\{k<|u_\epsilon|<k+1\}}a^\epsilon(x,\nabla u_\epsilon)\cdot \nabla u_\epsilon\, dx =  \int_{\{k<|u_\epsilon|<k+1\}}a^\epsilon(x,\nabla u_\epsilon)\cdot \nabla T_{k+1}u_\epsilon\, dx\\
& \quad = \int_{\{k<|u_\epsilon|\}}a^\epsilon(x,\nabla u_\epsilon)\cdot \nabla T_{k+1}u_\epsilon\, dx = \int_{\{k<|u_\epsilon|\}}a^\epsilon(x,\nabla u_\epsilon)\cdot \nabla( T_{k+1}u_\epsilon-T_ku_\epsilon)\, dx\\
& \quad = \int_{\Omega}a^\epsilon(x,\nabla u_\epsilon)\cdot \nabla( T_{k+1}u_\epsilon-T_ku_\epsilon)\, dx = \int_{\Omega} a^\epsilon(x,\nabla u_\epsilon)\cdot \nabla \psi_k(u_\epsilon)\, dx = \int_{\Omega} T_{1/\epsilon} f\psi_k(u_\epsilon)\, dx.
\end{align*}
It follows that
\begin{equation}\label{cont_rad}
\int_{\{k<|u_\epsilon|<k+1\}}a^\epsilon(x,\nabla u_\epsilon)\cdot \nabla u_\epsilon\, dx \leq \int_{\{|u_\epsilon|\geq k\}}|T_{1/\epsilon}f|\, dx \leq \int_{\{|u_\epsilon|\geq k\}}|f|\, dx \leq \gamma(k).
\end{equation}

\bigskip

\noindent \textit{Step 4. Convergences which follow directly from a priori estimates.} 
Let $p\in (1, \frac{d}{d-1})$   be any fixed exponent. For every $k$ there exists a subsequence of $\epsilon\to 0$ such that, for this nonrenumbered subsequence, there hold the following convergences  
\begin{align*}
& T_k(u_\epsilon) \xrightarrow{\epsilon\to 0} g_k \quad \textrm{strongly in}\quad L^p(\Omega)\quad \textrm{and weakly-* in}\quad L^\infty(\Omega),\\
& \nabla T_k u_\epsilon \xrightarrow{\epsilon\to 0} \nabla g_k \quad \textrm{weakly in}\quad L^1(\Omega)^d \quad \textrm{and weakly-*  in}\quad L_M(\Omega).
\end{align*}
We prove that, for a nonrenumbered subsequence, the sequence $\{ u_{\epsilon} \}$ is Cauchy in measure. Indeed
$$\{|u_{\epsilon_1}-u_{\epsilon_2}|\geq \delta\}\subset \{|T_k u_{\epsilon_1}-T_ku_{\epsilon_2}|\geq \delta\}\cup\{|u_{\epsilon_1}|>k\}\cup \{|u_{\epsilon_2}|>k\},$$
whence
$$
|\{|u_{\epsilon_1}-u_{\epsilon_2}|\geq \delta\}| \leq |\{|T_ku_{\epsilon_1}-T_ku_{\epsilon_2}|\geq \delta\}| + 2H(k).
$$
The fact that, for some subsequence (possibly different for different $k$), $\{ T_k u_\epsilon \}_{\epsilon}$ is Cauchy in measure, together with a diagonal argument, implies that there exists a measurable function $u:\Omega\to \R$ such that, for a subsequence 
\begin{align}
& u_\epsilon \xrightarrow{\epsilon\to 0} u\quad \textrm{in measure and for a.e.}\quad x\in \Omega,\label{conv:pointwise}\\
& \textrm{for every}\quad  r > 0 \quad \lim_{\epsilon_0\to 0}\left|\bigcup_{\epsilon\in (0,\epsilon_0)} \{ |u-u_\epsilon| > r \}\right| = 0,\nonumber
\end{align}
where the last assertion means that $u_\epsilon\to u$ almost uniformly. It follows that $g_k = T_ku$ and the subsequences such that the convergences 
\begin{align}
& T_ku_\epsilon \xrightarrow{\epsilon\to 0} T_ku \quad \textrm{strongly in}\quad L^p(\Omega)\quad \textrm{and weakly-* in}\quad L^\infty(\Omega),\\
& \nabla T_ku_\epsilon \xrightarrow{\epsilon\to 0} \nabla T_ku \quad \textrm{weakly in}\quad L^1(\Omega)^d \quad \textrm{and weakly-*  in}\quad L_M(\Omega)\label{eq:lmconv}
\end{align}
hold, coincide for all $k$. 
Moreover, for this subsequence,
\begin{align*}\label{hconv}
&\nabla h(u_\epsilon) \to \nabla h(u)\quad \textrm{for every}\quad h\in C^1_c(\Omega) \quad \textrm{weakly in}\quad L^1(\Omega)^d \quad \textrm{and weakly-*  in}\quad L_M(\Omega),\\
& h(u_\epsilon) \to h(u)\quad \textrm{for every}\quad h\in C^1_c(\Omega) \quad \textrm{pointwise in}\quad \Omega. 
\end{align*}
Fixing $k$, we deduce from \eqref{conv:2} that there exists a subsequence of indexes (depending on $k$) such that, for this subsequence
\begin{equation}\label{aconv}
a^\epsilon (x,\nabla T_ku_\epsilon) \xrightarrow{\epsilon\to 0} \alpha_k\quad  \textrm{weakly-* in}\quad L_{\widetilde{M}}(\Omega)\quad  \textrm{and weakly in}\quad L^1(\Omega)^{d}.
\end{equation}
Fix $k>0$. Denote $B_k = \{ |u| < k \}$ and $B_{\epsilon_0,r} = \bigcup_{\epsilon<\epsilon_0}\{ |u_\epsilon-u| > r \}$. Let $l>k$ be given and fix a subsequence of $\epsilon$ such that $\eqref{aconv}$ holds for the index $l$. Choose $m>k$ and $m\neq l$. We will prove that, for another subsequence of $\epsilon$ there holds $\alpha_m=\alpha_l$ on $B_k$.  Fix $k_1\in (k,\min\{l,m\})$. Choose  $\delta > 0$. There exists $\epsilon_0>0$ such that $|B_{\epsilon_0,k_1-k}| < \delta$.  It is easy to see that $B_k\setminus B_{\epsilon_0,k_1-k} \subset \{ |u_\epsilon| < k_1 \} $. This means that there holds $\nabla T_lu_\epsilon = \nabla T_mu_\epsilon$  on the set $B_k\setminus B_{\epsilon_0,k_1-k}$, whence $\alpha_l=\alpha_m$ a.e. on this set. Diagonal argument with respect to $\delta$ implies that there exists a subsequence of $\epsilon$ such that $\alpha_m = \alpha_l$ a.e. on whole $B_k$. Let $Z\subset[0,\infty)$ be a countable and dense set. From the diagonal argument with respect to $k\in Z$ and $l,m\in Z$ we deduce that there exists a sequence $\epsilon\to 0$ such that for every $k\in Z$ and every $l,m \in Z$ with $l,m > k$ there holds $\alpha_l=\alpha_m$ on $B_k$. It follows that there exists a measurable function $\alpha:\Omega \to \R^d$ such that $\alpha=\alpha_k$ on $B_k$ for every $k\in Z$. If $k\notin Z$, we can find a sequence $k_n\in Z$ such that $k_n\to k^-$, and, as $\alpha_{k} = \alpha$ on $B_{k_n}$, we deduce that $\alpha_k = \alpha$ on $B_k$ for every $k>0$. Note that the sequence of $\epsilon$ is the same for every $k\in Z$ and may depend of $k$ for $k\notin Z$.

\bigskip

\noindent \textit{Step 5. An auxiliary equality.} In this step we should prove that for every $v\in C^\infty(\overline{\Omega})$
\begin{equation}\label{limsup}
\lim_{\epsilon\to 0^+}\int_{\Omega} a^\epsilon (x,\nabla T_ku_\epsilon)\cdot \nabla T_ku_\epsilon v \, dx =  \int_{\Omega}\alpha_k \nabla T_ku v \, dx.
\end{equation}
The proof will proceed separately for the case of (C1) and (C2).  

\medskip

\noindent \textit{Step 5.1. The case (C1).} The proof follows the lines of the argument in \cite[Step 2]{gwiazda_corrigendum}. Define
$$
h_l(r) = \min \{ (l+1-|r|)^+,1 \} = \begin{cases}
1\ \textrm{if}\ |r| \leq l, \\
l+1-|r|\ \textrm{if}\ l < |r| <  l+1,\\
0\ \textrm{if}\ l+1\leq |r|. 
\end{cases}
$$
Let $v\in C^\infty(\overline{\Omega})$. We take $w = v h_l(u_\epsilon)(T_{k} u_\epsilon-T_k u_{\delta m})$ as a test function in \eqref{eq:weak}. Indeed, such choice is possible. To justify this we proceed similarly as in Step 2. If $\{h_{l,i}\}_{i=1}^\infty$ and $\{ T_{k,i}\}_{i=1}^\infty$ are smooth nonnegative functions, which approximate $h_l$ and $T_k$ pointwise from below, $a \in L_{\widetilde{M}}(\Omega)$, and $a_\eta\in C_0^\infty(\Omega)^d$ is the approximative sequence which exists by Theorem \ref{thm:density}, we only have to estimate
\begin{align*}
& \left|\int_{\Omega}(a-a_\eta)\cdot (\nabla (v h_{l,i}(u_{\epsilon i})(T_{k,i} u_{\epsilon i}-T_{k,i} u_{\delta m})) - \nabla (v h_l(u_{\epsilon})(T_k u_{\epsilon}-T_k u_{\delta m}))) \, dx\right|\\
&\quad \leq \left|\int_{\Omega}(a-a_\eta)\cdot \nabla v h_{l,i}(u_{\epsilon i})(T_{k,i} u_{\epsilon i}-T_{k,i}u_{\delta m})\, dx\right|\\
& \qquad \qquad  + \left|\int_{\Omega}(a-a_\eta)\cdot \nabla u_{\epsilon i} v h_{l,i}'(u_{\epsilon i})(T_{k,i}u_{\epsilon i}-T_{k,i}u_{\delta m}) \, dx\right|\\
&\qquad \qquad + \left|\int_\Omega(a-a_\eta)\cdot (\nabla T_{k,i}u_{\epsilon i}- \nabla T_{k,i}u_{\delta m}) v h_{l,i}(u_{\epsilon i})\, dx\right|\\
&\quad + \left|\int_{\Omega}(a-a_\eta)\cdot \nabla v h_{l}(u_{\epsilon})(T_{k}u_{\epsilon}-T_{k}u_{\delta m})\, dx\right|\\
& \qquad \qquad  + \left|\int_{\Omega}(a-a_\eta)\cdot \nabla u_{\epsilon} v h_{l}'(u_{\epsilon})(T_{k}u_{\epsilon}-T_{k}u_{\delta m}) \, dx\right|\\
&\qquad \qquad + \left|\int_\Omega(a-a_\eta)\cdot (\nabla T_{k}u_{\epsilon}- \nabla T_{k}u_{\delta m}) v h_{l}(u_{\epsilon})\, dx\right|\\
&\leq \|a-a_\eta\|_{L_{\widetilde{M}}(\Omega)}\|v\|_{W^{1,\infty}(\Omega)}\left(4k+(4k+1)(\|\nabla u_{\epsilon i}\|_{L_M(\Omega)}+\|\nabla u_{\epsilon}\|_{L_M(\Omega)})+ 2\|\nabla u_{\delta m}\|_{L_M(\Omega)}\right)\\
&\qquad \leq  \|a-a_\eta\|_{L_{\widetilde{M}}(\Omega)} c(\epsilon,\delta,k),
\end{align*} 
where the last constant does not depend on $i$ from a priori estimates derived in \cite[Section 5.1]{gwiazda_wittbold}. Now, \eqref{eq:weak} with $w = v h_l(u_\epsilon)(T_{k}u_\epsilon-T_ku_{\delta m})$ takes the form 
$$
\int_{\Omega}a^\epsilon(x,\nabla u_\epsilon)\cdot \nabla (v h_l(u_\epsilon)(T_{k}u_\epsilon-T_ku_{\delta m}))\, dx = \int_{\Omega} T_{1/\epsilon}f v h_l(u_\epsilon)(T_{k}u_\epsilon-T_ku_{\delta m})\, dx.
$$
Since $|v T_{1/\epsilon}f h_l(u_\epsilon)(T_{k}u_\epsilon-T_ku_{\delta m})| \leq 2k|f|\|v\|_{L^\infty(\Omega)}$ we can pass to the limit using the Lebesgue dominated convergence theorem on the right-hand side, whence
$$
\lim_{\delta\to 0^+}\lim_{m\to \infty}\lim_{\epsilon\to 0^+} \int_{\Omega}v T_{1/\epsilon}f h_l(u_\epsilon)(T_{k}u_\epsilon-T_ku_{\delta m})\, dx = 0.
$$
To deal with the left-hand side note that
\begin{align*}
& \int_{\Omega}a^\epsilon(x,\nabla u_\epsilon)\cdot \nabla (v h_l(u_\epsilon)(T_{k}u_\epsilon-T_ku_{\delta m}))\, dx \\
& \ \ = \int_{\Omega}a^\epsilon(x,\nabla u_\epsilon)\cdot \nabla u_\epsilon v h_l'(u_{\epsilon})(T_{k}u_\epsilon-T_ku_{\delta m})\, dx\\
&\qquad \qquad   + \int_{\Omega}a^\epsilon(x,\nabla u_\epsilon)\cdot \nabla v h_l(u_\epsilon)(T_{k}u_\epsilon-T_ku_{\delta m})\, dx\\
&\qquad \qquad  + \int_{\Omega}a^\epsilon(x,\nabla u_\epsilon)\cdot (\nabla T_{k}u_\epsilon-\nabla T_ku_{\delta m}) v h_l(u_\epsilon) \, dx\\
& \ \ = I_1 + I_2 + I_3.
\end{align*}
Now 
\begin{align*}
&
I_1 = -\int_{\{ l< u_\epsilon < l+1\}} a^\epsilon(x,\nabla u_\epsilon) \cdot \nabla u_\epsilon v(T_{k}u_\epsilon-T_ku_{\delta m})\, dx\\
&\qquad  + \int_{\{ -l-1< u_\epsilon < -l \}} a^\epsilon(x,\nabla u_\epsilon)\cdot  \nabla u_\epsilon v(T_{k}u_\epsilon-T_ku_{\delta m})\, dx.
\end{align*}
Using \eqref{eq:epsilon_est}, we obtain
\begin{align*}
&|I_1| \leq \|v\|_{L^\infty(\Omega)}\int_{\Omega}m(x)|T_ku_\epsilon-T_ku_{\delta m}|\, dx + 2k\|v\|_{L^\infty(\Omega)}\int_{\{l < |u_\epsilon| < l+1\}} a^\epsilon(x,\nabla u_\epsilon) \nabla u_\epsilon + m(x)\, dx\\
&\quad \leq \|v\|_{L^\infty(\Omega)}\int_{\Omega}m(x)|T_ku_\epsilon-T_ku_{\delta m}|\, dx + 2k\|v\|_{L^\infty(\Omega)}\int_{\{l \leq |u_\epsilon|\} } |f(x)|+ m(x)\, dx\\
&\quad \leq \|v\|_{L^\infty(\Omega)}\int_{\Omega}m(x)|T_ku_\epsilon-T_ku_{\delta m}|\, dx + 2k\|v\|_{L^\infty(\Omega)}\gamma(l).
\end{align*}
We can pass to the limit in the first term by the Lebesgue dominated convergence theorem, whence
$$
\lim_{l\to \infty}\lim_{\delta\to 0^+}\lim_{m\to\infty}  \lim_{\epsilon\to 0^+}|I_1| = 0.
$$
We deal with $I_2$. 
$$
I_2 = \int_{\Omega}a^\epsilon(x,\nabla T_{l+1}u_\epsilon)\cdot \nabla v h_l(u_\epsilon)(T_{k}u_\epsilon-T_ku_{\delta m})\, dx
$$
Now note that $ a^\epsilon(x,\nabla T_{l+1}u_\epsilon)\cdot \nabla v  \to \alpha_l \cdot \nabla v$ weakly in  $L^1$ as $\epsilon\to 0$. Indeed if $\psi\in L^{\infty}(\Omega)$ then
$$
\int_{\Omega} a^\epsilon(x,\nabla T_{l+1}u_\epsilon)\cdot \nabla v \psi\, dx \to \int_{\Omega} \alpha_l \cdot \nabla v \psi\, dx.
$$
As
$$
h_l(u_\epsilon)(T_{k}u_\epsilon-T_ku_{\delta m}) \to h_l(u)(T_{k}u-T_ku_{\delta m})\quad \textrm{pointwise}\quad \textrm{as}\quad \epsilon\to 0,
$$
and $|h_l(u_\epsilon)(T_{k}u_\epsilon-T_ku_{\delta m})|\leq 2k$,
by Lemma \ref{convergence_lemma} we deduce that
$$
\lim_{\epsilon\to 0^+} I_2 = \int_{\Omega}\alpha_l\cdot \nabla v h_l(u)(T_{k}u-T_ku_{\delta m})\, dx.
$$
We can pass with $m$ to infinity and with $\delta \to 0$, whence
$$
\lim_{\delta\to 0^+} \lim_{m\to\infty} \lim_{\epsilon\to 0^+} I_2 = 0.
$$
We deduce that 
$$
\lim_{l\to \infty} \lim_{\delta\to 0^+} \lim_{m\to\infty} \lim_{\epsilon\to 0^+}  \int_{\Omega}a^\epsilon(x,\nabla u_\epsilon)\cdot (\nabla T_{k}u_\epsilon-\nabla T_ku_{\delta m})v h_l(u_\epsilon) \, dx= 0,
$$
whence
$$
\lim_{l\to \infty} \lim_{\delta\to 0^+}\lim_{m\to\infty}  \lim_{\epsilon\to 0^+}  \int_{\Omega}a^\epsilon(x,\nabla T_{l+1}u_\epsilon)\cdot  (\nabla T_{k}u_\epsilon-\nabla T_ku_{\delta m})v h_l(u_\epsilon)\, dx = 0.
$$
We need to show that
\begin{equation}\label{eq:aux10}
\lim_{l\to \infty} \lim_{\delta\to 0^+} \lim_{m\to\infty}\lim_{\epsilon\to 0^+} \int_{\Omega}a^\epsilon(x,\nabla T_k u_\epsilon)\cdot (\nabla T_{k}u_\epsilon-\nabla T_ku_{\delta m})v h_l(u_\epsilon) \, dx = 0.
\end{equation}
To this end we choose $k<l$ and we study the expression
\begin{align*}
& \int_{\Omega}(a^\epsilon(x,\nabla T_{k}u_\epsilon)-a^\epsilon(x,\nabla T_{l+1}u_\epsilon)) \cdot  (\nabla T_{k}u_\epsilon-\nabla T_ku_{\delta m})v h_l(u_\epsilon)\, dx\\
&\quad = \int_{\Omega}(a^\epsilon(x,\nabla T_{l+1}u_\epsilon) - a^\epsilon(x,0))\cdot \nabla T_ku_{\delta m} v \chi_{\{ |u_\epsilon| > k \}} h_l(u_\epsilon) \, dx\\
& \quad = \int_{\Omega\setminus\{|u|=k\}}(a^\epsilon(x,\nabla T_{l+1}u_\epsilon) - a^\epsilon(x,0))\cdot  \nabla T_ku_{\delta m} v \chi_{\{ |u_\epsilon| > k \}} h_l(u_\epsilon)\, dx \\
& \qquad \qquad + \int_{\{ |u|=k \}}(a^\epsilon(x,\nabla T_{l+1}u_\epsilon) - a^\epsilon(x,0))\cdot \nabla T_ku_{\delta m} v \chi_{\{ |u_\epsilon| > k \}} h_l(u_\epsilon) \, dx\
\end{align*}
We pass to the limit $\epsilon \to 0$. To this end note that
\begin{align*}
&(a^\epsilon(x,\nabla T_{l+1}u_\epsilon)-a^\epsilon(x,0))\cdot   \nabla T_ku_{\delta m} \xrightarrow{\textrm{weakly in}\ L^1(\Omega\setminus\{|u|=k\})^d\ \textrm{as}\ \epsilon\to 0}  (\alpha_{l+1}-b(x))) \cdot \nabla T_ku_{\delta m},
\end{align*}  
where $b(x)\in L^1(\Omega\setminus\{ |u| = k \})$. Hence, by Lemma \ref{convergence_lemma}, 
\begin{align*}
&\lim_{\epsilon\to 0^+}\int_{\Omega\setminus\{|u|=k\}}(a^\epsilon(x,\nabla T_{l+1}u_\epsilon)-a^\epsilon(x,0))\cdot \nabla T_ku_{\delta m}  v \chi_{\{ |u_\epsilon| > k \}} h_l(u_\epsilon) \, dx \\
&\qquad = \int_{\Omega\setminus\{|u|=k\}}(\alpha_{l+1}-b(x))\cdot  \nabla T_ku_{\delta m} v \chi_{\{ |u| > k \}} h_l(u)\, dx.
\end{align*} 
We easily deduce
\begin{align*}
&
\lim_{\delta\to 0^+}\lim_{m\to\infty}\lim_{\epsilon\to 0^+}\int_{\Omega\setminus\{|u|=k\}}(a^\epsilon(x,\nabla T_{l+1}u_\epsilon)-a^\epsilon(x,0))\cdot  \nabla T_ku_{\delta m} v \chi_{\{ |u_\epsilon| > k \}} h_l(u_\epsilon)\, dx\\
& \qquad  = \int_{\Omega\setminus\{|u|=k\}}(\alpha_{l+1}-b(x)) \cdot \nabla T_ku v \chi_{\{ |u| > k \}} h_l(u) \, dx = 0.
\end{align*} 
To deal with the second term note that for some function $F_{k,l}\in L_{\widetilde{M}}(\{ |u| = k\})$, and for subsequence of $\epsilon$ there holds 
\begin{align*}
&(a^\epsilon(x,\nabla T_{l+1}u_\epsilon) - a^\epsilon(x,0)) \chi_{\{ |u_\epsilon| > k \}} h_l(u_\epsilon) \xrightarrow{\textrm{weakly-* in}\ L_{\widetilde{M}}(\{|u|=k\})\ \textrm{as}\ \epsilon\to 0}  F_{k,l},
\end{align*}
where on the left-hand side we consider restrictions of functions $(a^\epsilon(x,\nabla T_{l+1}u_\epsilon) - a^\epsilon(x,0)) \chi_{\{ |u_\epsilon| > k \}} h_l(u_\epsilon)$ to the set $\{|u|=k\}$. Hence
\begin{align*}
&
\lim_{\delta\to 0^+}\lim_{m\to\infty}\lim_{\epsilon\to 0^+}\int_{\{|u|=k  \}}(a^\epsilon(x,\nabla T_{l+1}u_\epsilon)-a^\epsilon(x,0))  \chi_{\{ |u_\epsilon| > k \}} h_l(u_\epsilon)\cdot  \nabla T_ku_{\delta m}v \, dx\\
& \qquad  = \int_{\{|u|=k  \}}F_{k,l}\cdot \nabla T_ku v \, dx = 0,
\end{align*} 
where the last equality holds due to the fact that
$$
\{|u|=k  \} \subset \{|T_ku|=k  \},
$$
and 
$$
\nabla T_k u = 0\quad \textrm{a.e. on the set}\quad \{|T_ku|=k  \},
$$
cf. \cite[Theorem 4.4 (iv)]{evans_gariepy}. So, \eqref{eq:aux10} holds. Now, there holds
\begin{align*}
& \int_{\Omega}a^\epsilon(x,\nabla T_k u_\epsilon)\cdot  (\nabla T_{k}u_\epsilon-\nabla T_ku_{\delta m}) v h_l(u_\epsilon) \, dx \\
& \ \ = \int_{\Omega}a^\epsilon(x,\nabla T_k u_\epsilon)\cdot (\nabla T_{k}u_\epsilon-\nabla T_ku_{\delta m})v \, dx - \int_{\Omega}a^\epsilon(x,0)\cdot \nabla T_ku_{\delta m} v (h_l(u_\epsilon)-1) \, dx,
\end{align*}

But
$$
\lim_{\epsilon\to 0^+}\int_{\Omega}a^\epsilon(x,0) \cdot  \nabla T_ku_{\delta m} v (h_l(u_\epsilon)-1)\, dx = 
\int_{\Omega}b(x)\cdot \nabla T_ku_{\delta m} v (h_l(u)-1) \, dx
$$
$$
\lim_{\delta\to 0^+}\lim_{m\to\infty}\lim_{\epsilon\to 0^+}\int_{\Omega}a^\epsilon(x,0)\cdot \nabla T_ku_{\delta m} v (h_l(u_\epsilon)-1) \, dx = 
\int_{\Omega}b(x) \cdot \nabla T_ku v(h_l(u)-1) \, dx,
$$
and due to pointwise convergence $h_l(u)-1$ to zero as $l\to \infty$ we obtain
$$
\lim_{l\to \infty}\lim_{\delta\to 0^+}\lim_{m\to\infty}\lim_{\epsilon\to 0^+}\int_{\Omega}a^\epsilon(x,0)\cdot \nabla T_ku_{\delta m} v (h_l(u_\epsilon)-1) \, dx = 0.
$$
We deduce
$$
\lim_{\delta\to 0^+} \lim_{m\to\infty}\lim_{\epsilon\to 0^+} \int_{\Omega}a^\epsilon(x,\nabla T_k u_\epsilon)\cdot (\nabla T_{k}u_\epsilon-\nabla T_ku_{\delta m})v \, dx = 0,
$$
whence
$$
\lim_{\epsilon\to 0^+} \int_{\Omega}a^\epsilon(x,\nabla T_k u_\epsilon)\cdot \nabla T_{k} u_\epsilon v \, dx = \lim_{\delta\to 0^+} \lim_{m\to\infty}\lim_{\epsilon\to 0^+} \int_{\Omega}a^\epsilon(x,\nabla T_k u_\epsilon)\cdot \nabla T_k u_{\delta m} v \, dx.
$$
It follows that
$$
\lim_{\epsilon\to 0^+} \int_{\Omega}a^\epsilon(x,\nabla T_k u_\epsilon)\cdot \nabla T_{k}(u_\epsilon)v \, dx = \int_{\Omega}\alpha_k \cdot \nabla T_k(u)v \, dx.
$$
which is the required assertion \eqref{limsup}. 

\medskip

\noindent \textit{Step 5.2. The case (C2).} The proof follows the lines of the argument in \cite[Proposition 3.2]{gwiazda_skrzypczak}.
	We first show that we can take as a test function in \eqref{eq:weak} the function
$w=v h_l(u_\epsilon)(T_k u_\epsilon-(T_ku)_\delta)$
where $v\in C^{\infty}(\overline{\Omega})$ and $\{(T_ku)_\delta\}_{\delta>0}\subset C_0^\infty(\Omega)$ is the approximating sequence of $T_k u$ which exists by Theorem \ref{thm:approx}. Here $l\geq k$ are fixed. Such a choice of $w$ is possible: clearly, $w\in L^\infty(\Omega)$. Since $v$ and $h_l(u_\epsilon)$ are bounded, while $T_k(u_\epsilon)\in V_0^M\cap L^\infty(\Omega)$, we easily check by computing the weak gradient of $w$ that $w\in W^{1,1}_0(\Omega)$ and that $\nabla w\in L_M(\Omega)$, i.e. $w\in V_0^M$. Therefore, we can apply Theorem \ref{thm:approx} in order to get an approximating sequence $w_\nu\in C_0^\infty(\Omega)$. Now, we can test \eqref{eq:weak} against each $w_\nu$ and it follows directly from Theorem \ref{thm:approx} that the right-hand side of \eqref{eq:weak} converges to the integral with $w_\nu$ replaced by $w$. On the other hand, by Lemma \ref{lem:modular} the left-hand side converges, too (observe that $\nabla w_\nu$ is indeed bounded in $L_M(\Omega)$ by the triangle inequality, since we have that $\nabla w_\nu\stackrel{M}{\longrightarrow}\nabla w$ and we know that $\nabla w\in L_M(\Omega)$) and thus we get
$$
\int_\Omega  a^\epsilon(x,\nabla T_k u_\epsilon)\cdot\nabla(v h_l(u_\epsilon)(T_k u_\epsilon-(T_ku)_\delta))\ dx=\int_\Omega T_{1/\epsilon} f v h_l(u_\epsilon)(T_k u_\epsilon-(T_ku)_\delta)\ dx.
$$
Now, due to the Lebesgue dominated convergence theorem applied twice we obtain that the right-hand side converges to zero
$$
\lim_{\epsilon\to 0^+}\lim_{\delta\to 0^+}\int_\Omega T_{1/\epsilon} f v h_l(u_\epsilon) (T_ku_\epsilon-(T_ku)_\delta)\ dx=0.
$$
Indeed, it is enough to observe that from the estimate in Theorem \ref{thm:approx}, we have the pointwise bound
\begin{align*}
|T_k u_\epsilon-(T_ku)_\delta|\leq k+\|(T_ku)_\delta\|_{L^\infty(\Omega)}\leq k+c(\Omega)\|T_ku\|_{L^\infty(\Omega)}\leq k(1+c(\Omega)).
\end{align*}
In order to deal with the left-hand side we compute the weak gradient and obtain three integrals to estimate
\begin{align*}
&\int_\Omega  a^\epsilon(x,\nabla T_k u_\epsilon)\cdot\nabla( v h_l(u_\epsilon) (T_ku_\epsilon-(T_ku)_\delta))\ dx\\
& \qquad =
\int_\Omega  a^\epsilon(x,\nabla T_k u_\epsilon)\cdot v h'_l(u_\epsilon) \nabla u_\epsilon (T_ku_\epsilon-(T_ku)_\delta)\ dx\\
&\qquad \quad +\int_\Omega a^\epsilon(x,\nabla T_k u_\epsilon)\cdot\nabla v h_l(u_\epsilon) (T_k u_\epsilon-(T_ku)_\delta)\ dx\\
& \qquad\qquad +
\int_\Omega  a^\epsilon(x,\nabla T_k u_\epsilon)\cdot \nabla(T_k u_\epsilon-(T_ku)_\delta) v h_l(u_\epsilon) \ dx=I_1+I_2+I_3.
\end{align*}
We deal with these three terms as in Step 5.1, leading to  the same assertion \eqref{limsup}. The calculations are analogous to those from Step 5.1, only to pass to the limit in the term $I_3$ we use Lemma \ref{lem:modular} and the fact that $\nabla (T_ku)_\delta  \stackrel{M}{\longrightarrow}\nabla T_ku$  in place of the weak-* convergence in $L_M(\Omega)$. 

\bigskip

\noindent \textit{Step 6. Commutator estimate.} In this step we derive a simple commutator estimate which will be used several times in the following steps. We will estimate the following difference of two expressions 
\begin{align*}
& \int_{B(0,\epsilon)}a(x,\nabla T_k u_\epsilon - s)\cdot (\nabla T_k u_\epsilon - s) \phi^\epsilon(s)\, ds - a^\epsilon(x,\nabla T_ku_\epsilon) \cdot \nabla T_k u_\epsilon\\
& \ \ = \int_{B(0,\epsilon)}a(x,\nabla T_k u_\epsilon - s)\cdot (\nabla T_k u_\epsilon - s) \phi^\epsilon(s)\, ds - \left(\int_{B(0,\epsilon)}a(x,\nabla T_k u_\epsilon - s) \phi^\epsilon(s)\, ds\right) \cdot \nabla T_k u_\epsilon.
\end{align*}
This difference is equal to 
$$
- \int_{B(0,\epsilon)}a(x,\nabla T_k u_\epsilon - s)\cdot s \phi^\epsilon(s)\, ds 
$$
We will in fact prove that there holds
\begin{equation}\label{commutator}
\lim_{\epsilon\to 0} \int_\Omega\int_{B(0,\epsilon)}|a(x,\nabla T_k u_\epsilon - s)\cdot s| \phi^\epsilon(s) \, ds\, dx = 0.
\end{equation}
Denote the double integral in the expression \eqref{commutator} by $I$. There holds 
\begin{align*}
&I = \int_{\Omega}\int_{B(0,\epsilon)}\phi^\epsilon(s)|a(x,\nabla T_ku_\epsilon-s)\cdot s| \, ds\, dx\\
&\qquad  \leq \epsilon \int_{\Omega}\int_{B(0,\epsilon)}\phi^\epsilon(s)\left|a(x,\nabla T_ku_\epsilon-s)\cdot \frac{s}{|s|}\right|\, ds\, dx\\
& \qquad  \leq \epsilon \int_{\Omega}\int_{B(0,\epsilon)}\phi^\epsilon(s)\left(\widetilde{M}\left(x,a(x,\nabla T_ku_\epsilon-s)\right) + M\left(x,\frac{s}{|s|}\right)\right)\, ds\, dx\\
&\qquad \leq \epsilon \int_{\Omega}\int_{B(0,\epsilon)}\phi^\epsilon(s)\left(\widetilde{M}\left(x,a(x,\nabla T_ku_\epsilon-s)\right) + m_2(1)\right)\, ds\, dx\\
&\qquad \leq \epsilon\left( C +  \frac{1}{c_A}\int_{\Omega}\int_{B(0,\epsilon)}\phi^\epsilon(s)\left(a(x,\nabla T_ku_\epsilon-s)\cdot(\nabla T_ku_\epsilon-s) + m(x)\right)\, ds\, dx\right)\\
&\qquad \leq  \epsilon\left( C +  \frac{1}{c_A}\int_{\Omega}\widetilde{M}(x,a^\epsilon(x,\nabla T_ku_\epsilon))+M(x,\nabla T_ku_\epsilon) \, dx\right)+  \frac{\epsilon}{c_A}I,
\end{align*}
where $C$ is a generic constant dependent only on $\eta, c_A, \|m\|_{L^1(\Omega)}$, and $m_2(1)$. This means that
\begin{align*}
& I  \leq C\frac{c_A}{c_A-\epsilon}\epsilon \left( 1 +  \int_{\Omega}\widetilde{M}(x,a^\epsilon(x,\nabla T_ku_\epsilon))+M(x,\nabla T_ku_\epsilon) \, dx\right),
\end{align*}
whence, by \eqref{conv:1} and \eqref{conv:2} we deduce that
\begin{equation}
\lim_{\epsilon\to 0}I = 0,
\end{equation}
and the assertion is proved.

\bigskip

 \noindent \textit{Step 7. Weak convergence in $L^1$ of $a^\epsilon(x,\nabla T_ku_\epsilon)\cdot \nabla T_ku_\epsilon$.} Remembering, that $a^\epsilon$ defined by \eqref{aeps} has the form
$$a^\epsilon(x,\xi) =  \int_{B(0,\epsilon)}\phi^\epsilon(\lambda) a(x,\xi-\lambda)\, d\lambda,
 $$
 where $\phi^\epsilon$ is a mollifier kernel, there holds 
 $$
 a^\epsilon(x,\nabla T_ku_\epsilon)\cdot \nabla T_ku_\epsilon = \int_{B(0,\epsilon)}\phi^\epsilon(\lambda) a(x,\nabla T_ku_\epsilon-\lambda)\cdot \nabla T_ku_\epsilon\, d\lambda.
 $$
We need to show that this sequence converges weakly in $L^1(\Omega)$, i.e. that
\begin{equation}\label{weak_l1}
a^\epsilon(x,\nabla T_ku_\epsilon) \cdot \nabla T_ku_\epsilon\to \alpha_k\cdot \nabla T_ku\quad \textrm{weakly in}\quad L^1(\Omega)\quad \textrm{as}\quad \epsilon\to 0.
\end{equation}
There exists a sequence of sets $E_1 \subset E_2 \subset \ldots \subset E_l \subset \ldots \subset \Omega$ with $\lim_{l\to \infty} |\Omega\setminus E_l| = 0 $ such that $|\nabla T_ku| + |a(x,\nabla T_ku)| + |m(x)|\leq c(l)$ on the set $E_l$ (where $m$ is the function present in assumption (A3)) and at the same time  functions $a^\epsilon(x,\nabla T_ku_\epsilon)\cdot \nabla T_ku_\epsilon$ are equiintegrable on $E_l$ (such sets exist by Lemma \ref{chacon} and $L^1$ boundedness of $a^\epsilon(x,\nabla T_ku_\epsilon)\cdot \nabla T_ku_\epsilon$ which is a consequence of \eqref{eq:test}). Note that the sets $E_l$ may depend on $k$ but the argument in this step is conducted for any fixed number $k$. 

We rewrite the following inequality which is a consequence of the monotonicity of $A$
$$
0\leq \int_{B(0,\epsilon)}\phi^\epsilon(s)(a(x,\nabla T_ku_\epsilon-s)-a(x,\nabla T_ku))\cdot (\nabla T_ku_\epsilon-s-\nabla T_ku)\, ds \ \   \textrm{for almost every}\ \ x\in \Omega.
$$
as 
$$
0\leq \int_{\R^n}\phi^\epsilon(\nabla T_ku_\epsilon-\lambda)(a(x,\lambda)-a(x,\nabla T_ku))\cdot (\lambda-\nabla T_ku)\, d\lambda \quad \textrm{for almost every}\quad x\in \Omega.
$$
Now denote by $\mu_x^{k,\epsilon}$ the probability measure absolutely continuous with respect to the Lebesgue measure with the density given by $\phi^\epsilon(\nabla T_ku_\epsilon-\lambda)$, Using this notation we can rewrite the above formula as 
$$
0\leq \int_{\R^n}(a(x,\lambda)-a(x,\nabla T_ku))\cdot (\lambda-\nabla T_ku)\, d\mu_x^{k,\epsilon}(\lambda) \quad \textrm{for almost every}\quad x\in \Omega.
$$
Define the map $g_x(\lambda) = \lambda + a(x,\lambda)$. This is a bijection from $\R^d$ to $\mathrm{im}\, g_x$. Now we define the Borel measures $\nu_{x}^{k,\epsilon}$ as the push-forward of measures $\mu_x^{k,\epsilon}$ through the functions $g_x$, i.e. by the formula $\nu_{x}^{k,\epsilon}(S) = \mu_x^{k,\epsilon}(g_x^{-1}(S\cap \textrm{im}\, (g_x)))$ for Borel sets $S\subset \R^d$ (cf. \cite[Section 5.2]{Ambrosio_Gigli_Savare}). Then
\begin{equation}\label{eq:positiv}
0\leq \int_{\R^d}\left(\frac{\lambda+\varphi_{A(x)}(\lambda)}{2}-a(x,\nabla T_ku)\right)\cdot \left(\frac{\lambda-\varphi_{A(x)}(\lambda)}{2}-\nabla T_ku\right)\, d\nu_x^{k,\epsilon}(\lambda) \quad \textrm{for a. e.}\quad x\in \Omega.
\end{equation}
Consider the maps $\Omega \ni x\to \nu_x^{k,\epsilon} \in \mathcal{M}(\R^d)$. The definition of $\nu_x^{k,\epsilon}$ implies, by the Fubini theorem, that these maps are weak-* measurable (see \cite[Section 2]{Ball_fundamental}). Moreover, all measures $\nu_x^{k,\epsilon}$ are probability measures and hence it follows that $\|\nu_x^{k,\epsilon}\|_{L^\infty_w(\Omega;\mathcal{M}(\R^d))} = 1$.
We will use Theorem \ref{fundamental}. To use this result we need to verify two claims. 

\medskip

\noindent \textit{Claim 1.} The first claim is the tightness condition 
$$
\lim_{R\to \infty} \sup_{\epsilon>0} |\{x\in \Omega\,:\ \textrm{supp}(\nu^{\epsilon,k}_x)\setminus B(0,R)\neq \emptyset\}| = 0.
$$
To verify this condition define the function $\gamma^{k,\epsilon}(x)$ by
$$
\gamma^{k,\epsilon}(x) = \max_{\lambda\in \textrm{supp}\, (\nu^{\epsilon,k}_x)}|\lambda| = \max_{\lambda\in \textrm{supp}\, (\mu^{\epsilon,k}_x)}|\lambda+a(x,\lambda)|\leq \sup_{s\in B(0,\epsilon)} |\nabla T_ku_\epsilon+s + a(x,\nabla T_ku_\epsilon+s)|.
$$
It follows that
$$
\gamma^{k,\epsilon}(x) \leq 3\max\left\{\epsilon , |\nabla T_ku_\epsilon|,\sup_{s\in B(0,\epsilon)} |a(x,\nabla T_ku_\epsilon+s)|\right\}.
$$
Now define 
$$g(s) = \min\left\{\frac{\widetilde{m_2}\left(\frac{s}{3}\right)}{\max\left\{ \frac{s}{3} ,1\right\}}, m_1\left(\frac{s}{3} \right)\right\}.
$$
Note that $g$ is a continuous function and $\lim_{s\to \infty}g(s) = \infty$, as both $m_1$ and $\widetilde{m_2}$ are $N$-functions. Then
$$
g(\gamma^{k,\epsilon}(x)) \leq m_1(\epsilon) + m_1(|\nabla T_ku_\epsilon|) + \frac{\widetilde{m_2}\left(\sup_{s\in B(0,\epsilon)} |a(x,\nabla T_ku_\epsilon+s)|\right)}{\max\left\{ \sup_{s\in B(0,\epsilon)} |a(x,\nabla T_ku_\epsilon+s)| ,1\right\}}.
$$
It follows that
\begin{align*}
& g(\gamma^{k,\epsilon}(x)) \leq m_1(\epsilon) + m_1(|\nabla T_ku_\epsilon|) + \frac{\sup_{s\in B(0,\epsilon)} \widetilde{m_2}\left( |a(x,\nabla T_ku_\epsilon+s)|\right)}{\max\left\{ \sup_{s\in B(0,\epsilon)} |a(x,\nabla T_ku_\epsilon+s)| ,1\right\}}\\
&\qquad  \leq  m_1(\epsilon) + m_1(|\nabla T_ku_\epsilon|) + \frac{\sup_{s\in B(0,\epsilon)} \widetilde{M}\left(x, a(x,\nabla T_ku_\epsilon+s)\right)}{\max\left\{ \sup_{s\in B(0,\epsilon)} |a(x,\nabla T_ku_\epsilon+s)| ,1\right\}}\\
& \qquad \leq  m_1(\epsilon) + m_1(|\nabla T_ku_\epsilon|) + \frac{\sup_{s\in B(0,\epsilon)} |a(x,\nabla T_k(u_\epsilon)+s)||\nabla T_k(u_\epsilon)+s| + m(x)}{\max\left\{ \sup_{s\in B(0,\epsilon)} |a(x,\nabla T_ku_\epsilon+s)| ,1\right\}}\\
& \qquad \leq  m_1(\epsilon) + m_1(|\nabla T_ku_\epsilon|) + m(x) + \frac{\sup_{s\in B(0,\epsilon)} |a(x,\nabla T_ku_\epsilon+s)||\nabla T_ku_\epsilon+s|}{\max\left\{ \sup_{s\in B(0,\epsilon)} |a(x,\nabla T_ku_\epsilon+s)| ,1\right\}}\\
& \qquad \leq  m_1(1) + M(x,\nabla T_ku_\epsilon) + m(x) + 1 + |\nabla T_ku_\epsilon|.
\end{align*}
We deduce that there exists a constant $C_1>0$ such that
$$
g(\gamma^{k,\epsilon}(x)) \leq C_1(1+M(x,\nabla T_ku_\epsilon)) + m(x),
$$
whence
$$
\int_{\Omega} g(\gamma^{k,\epsilon}(x))\, dx \leq C_2(k),
$$
where the constant $C_2(k)$ does not depend of $\epsilon$. 
Now, 
\begin{align*}
&\sup_\epsilon |\{x\in \Omega\,:\ \textrm{supp}(\nu^{\epsilon,k}_x)\setminus B(0,R)\neq \emptyset\}| =  \sup_\epsilon |\{x\in \Omega\,:\gamma^{k,\epsilon}(x)\geq R\}|\\
&\qquad \leq \frac{1}{\min_{s\geq R} \{g(s)\}} \int_{E_n}g(\gamma^{k,\epsilon}(x))\, dx \leq \frac{ C_2(k)}{\min_{s\geq R} \{g(s)\}},
\end{align*}
whence the tightness condition follows. 

\medskip

\noindent \textit{Claim 2.} The second claim we need to verify is that for functions  
\begin{align*}
& \qquad F_1(x,r) = \frac{r+\varphi_{A(x)}(r)}{2}\cdot \frac{r-\varphi_{A(x)}(r)}{2}, &&F_2(x,r) = a(x,\nabla T_ku)\cdot \frac{r-\varphi_{A(x)}(r)}{2},\\
& \qquad  F_3(x,r) = \frac{r+\varphi_{A(x)}(r)}{2}\cdot \nabla T_k(u), &&F_4(x,r) = a(x,\nabla T_ku)\cdot \nabla T_k(u).
\end{align*}
there holds 
\begin{equation} \label{eq:equiint}
\lim_{R\to 0} \sup_{\epsilon > 0} \int_{E_l} \int_{\{r\in \R^d : |F_i(x,r)| > R\}} \left|F_i(x,r)\right| \, d\nu^{k,\epsilon}_x(r)\,dx = 0\quad i=1,2,3,4.
\end{equation}
We begin with the proof for $F_1$. There holds 
$$
\int_{\{r\in \R^d : |F_1(x,r)| > R\}} \left|\frac{r+\varphi_{A(x)}(r)}{2}\cdot \frac{r-\varphi_{A(x)}(r)}{2}\right| \, d\nu^{k,\epsilon}_x(r)
= \int_{\{r\in \R^d : |a(x,r)\cdot r| > R \}} |a(x,r)\cdot r| \, d\mu^{k,\epsilon}_x(r).
$$
Now assume that $(x,r)\in E_l \times \mathrm{supp}\, (\mu_x^{k,\epsilon})$ is such that 
$|a(x,r)\cdot r| > R$. There holds
$$
R < |a(x,r)\cdot r|\leq  M(x,r) + \widetilde{M}(x,a(x,r)) \leq  m_2(|r|) + \widetilde{m_1}(|a(x,r)|).
$$
Then  
$$
\textrm{either}\ \  \frac{R}{2} < m_2(|r|) \quad \textrm{or} \quad \frac{R}{2} < \widetilde{m_1}(|a(x,r)|) \leq \widetilde{m_1}\left(\frac{2C_1}{c_A} m_2\left(\frac{2|r|}{C_A}\right)+\frac{2C_1}{c_A}c(l) + C_1\right),
$$
where we used \eqref{eq:growth_bound} in the last bound. We deduce that there exists $R_1(R)$ with $\lim_{R\to \infty} R_1(R) = \infty$ such that $|r|>R_1(R)$. It follows that if only $|a(x,r)\cdot r| > R$ then $\textrm{supp}\ (\mu_x^{k,\epsilon} )\subset \mathbb{R}^d \setminus B(0,R_1 - 1)$. Let us estimate 
$$
\int_{\R^d}|a(x,r)\cdot r|d\mu^{k,\epsilon}_x(r) \geq \int_{\R^d}\widetilde{M}(x,a(x,r))+M(x,r) - m(x)d\mu^{k,\epsilon}_x(r) \geq m_1(R_1(R) - 1) - c(l).
$$
This means that there exists $R_2(R)$ with $\lim_{R\to \infty} R_2(R) = \infty$ such that 
$$
\int_{\R^d}|a(x,r)\cdot r|d\mu^{k,\epsilon}_x(r) \geq \int_{\R^d}\widetilde{M}(x,a(x,r))+M(x,r) - m(x)d\mu^{k,\epsilon}_x(r) \geq R_2(R).
$$
It follows that 
\begin{align*}
&\int_{E_l}\int_{\{r\in \R^d : |a(x,r)\cdot r| > R \}} |a(x,r)\cdot r| \, d\mu^{k,\epsilon}_x(r)\, dx \\
& \qquad \leq \int_{\left\{x\in E_l\,:\ \int_{\mathbb{R}^d}|a(x,r)\cdot r|\, d\mu_x^{k,\epsilon}(r) \geq R_2(R)\right\}} \int_{\mathbb{R}^d}|a(x,r)\cdot r|\, d\mu_x^{k,\epsilon}(r)\, dx. 
\end{align*}
But the functions 
$$
E_l\ni x \mapsto \int_{\mathbb{R}^d}|a(x,r)\cdot r|\, d\mu_x^{k,\epsilon}(r)
$$
are equiintegrable on $E_l$. Indeed
\begin{align*}
& \int_{\mathbb{R}^d}|a(x,r)\cdot r|\, d\mu_x^{k,\epsilon}(r)\leq  \int_{\mathbb{R}^d}a(x,r)\cdot r\, d\mu_x^{k,\epsilon}(r) + 2m(x) \\
& \qquad \leq 
\int_{\mathbb{R}^d}\phi^\epsilon(\nabla T_ku_\epsilon-r)a(x,r)\cdot r\, dr + 2m(x)\\
& \qquad \leq a^\epsilon(x,\nabla T_ku_\epsilon)\cdot \nabla T_ku_\epsilon- \int_{\mathbb{R}^d}\phi^\epsilon(s)a(x,\nabla T_ku_\epsilon-s)\cdot s\, ds + 2c(l),
\end{align*}
 and the assertion follows from the commutator estimate \eqref{commutator} and equiintegrability on $E_l$ of $ a^\epsilon(x,\nabla T_ku_\epsilon)\cdot \nabla T_ku_\epsilon$. The assertion \eqref{eq:equiint} for $i=1$ is proved.    To get the assertion for $i=2$ observe that 
 $$
 \int_{\{r\in \R^d : |F_2(x,r)| > R\}} \left|F_2(x,r)\right| \, d\nu^{k,\epsilon}_x(r) = \int_{\{r\in \R^d : |a(x,\nabla T_k(u))\cdot r| > R \}} |a(x,\nabla T_ku)\cdot r| \, d\mu^{k,\epsilon}_x(r).
 $$
 If $(x,r)\in E_l\times \mathrm{supp}\, (\mu^{k,\epsilon}_x)$ is such that $|a(x,\nabla T_ku)\cdot r| > R$, then, clearly 
 $$
 \int_{\mathbb{R}^d} |a(x,\nabla T_ku)\cdot s|\, d\mu^{k,\epsilon}_x(s) \leq   \int_{\mathbb{R}^d} |a(x,\nabla T_ku)\cdot r| - |a(x,\nabla T_ku)\cdot (s-r)|\, d\mu^{k,\epsilon}_x(s)\geq R - 2 \epsilon c(l).  
 $$
 Hence 
\begin{align*}
 & \int_{E_l}\int_{\{r\in \R^d : |a(x,\nabla T_ku)\cdot r| > R \}} |a(x,\nabla T_ku)\cdot r| \, d\mu^{k,\epsilon}_x(r)\, dx \\
 & \leq \int_{\left\{x\in E_l\,:\ \int_{\mathbb{R}^d}|a(x,\nabla T_ku)\cdot r|\, d\mu_x^{k,\epsilon}(r) \geq R - 2 \epsilon c(l)\right\}} \int_{\mathbb{R}^d}|a(x,\nabla T_ku)\cdot r|\, d\mu_x^{k,\epsilon}(r)\, dx. 
 \end{align*}
 We prove the equiintegrability of functions 
 $$
 E_l \ni x\mapsto \int_{\mathbb{R}^d}|a(x,\nabla T_ku)\cdot r|\, d\mu_x^{k,\epsilon}(r).
 $$
Indeed 
 \begin{align*}
 &\int_{\mathbb{R}^d}|a(x,\nabla T_ku)\cdot r|\, d\mu_x^{k,\epsilon}(r) \leq  c(l) \int_{\mathbb{R}^d}\phi^\epsilon(\nabla T_ku_\epsilon-r)|r|\, dr \\
 &\qquad = c(l)\int_{\mathbb{R}^d}\phi^\epsilon(s)|\nabla T_ku_\epsilon-s|\, ds \leq c(l)(|\nabla T_ku_\epsilon| + 1),
 \end{align*}
 and the assertion follows from equiintegrability of $\nabla T_ku_\epsilon$, cf. \eqref{conv:1}.
 
We proceed with the assertion for $i=3$. Observe that  
$$
\int_{\{r\in \R^d : |F_3(x,r)| > R\}} \left|F_3(x,r)\right| \, d\nu^{k,\epsilon}_x(r) = \int_{\{r\in \R^d : |a(x,r)\cdot \nabla T_ku| > R \}} |a(x,r)\cdot \nabla T_ku| \, d\mu^{k,\epsilon}_x(r).
$$
If $(x,r)\in E_l\times \mathrm{supp}\, (\mu^{k,\epsilon}_x)$ is such that $|a(x,r)\cdot \nabla T_ku| > R$, then, clearly 
$$
R < |a(x,r)\cdot \nabla T_ku| \leq c(l) |a(x,r)|.
$$
 We estimate
\begin{align*}
& \int_{\{r\in \R^d : |F_3(x,r)| > R\}} |a(x,r)\cdot \nabla T_ku|\, d\mu^{k,\epsilon}_x(r)\\
& \qquad  \leq c(l)\int_{\{r\in \R^d : |F_3(x,r)| > R\}}\frac{|a(x,r)|}{\widetilde{m_2}(|a(x,r)|)}  \widetilde{M}(x,a(x,r))\, d\mu^{k,\epsilon}_x(r)\\
& \qquad \leq \sup_{s > R/c(l)} \frac{s}{\widetilde{m_2}(s)} \int_{\R^d}  \widetilde{M}(x,a(x,r))\, d\mu^{k,\epsilon}_x(r) = C(R)\int_{\R^d}  \widetilde{M}(x,a(x,r))\, d\mu^{k,\epsilon}_x(r),
\end{align*}
where $C(R) \to 0$ as $R\to \infty$. It suffices to show that the last integral is bounded uniformly with respect to $\epsilon$ in $L^1(E_l)$. We estimate the integrand from above 
\begin{align*}
&
\int_{\R^d}  \widetilde{M}(x,a(x,r))\, d\mu^{k,\epsilon}_x(r) \leq m(x) + \int_{\R^d}  a(x,r)\cdot r\, d\mu^{k,\epsilon}_x(r)\\
& \qquad \leq c(l) + \int_{\R^d}  a(x,r)\cdot r\, \phi^\epsilon (\nabla T_ku_\epsilon-r)\, dr \\
& \qquad = c(l) + \int_{\R^d}  a(x,\nabla T_ku_\epsilon-s)\cdot (\nabla T_ku_\epsilon-s)\, \phi^\epsilon(s)\, ds\\
&  \qquad = c(l) + a^\epsilon(x,\nabla T_ku_\epsilon)\cdot \nabla T_ku_\epsilon - \int_{\R^d}  a(x,\nabla T_ku_\epsilon-s)\cdot s\, \phi^\epsilon(s)\, ds 
\end{align*}
The required assertion follows from \eqref{commutator} and the estimates \eqref{conv:1}--\eqref{conv:2}. Finally, the assertion \eqref{eq:equiint} for $F_4$ holds trivially, as $|\nabla T_ku| + |a(x,\nabla T_ku)| \leq c(l)$ on $E_l$. 

We are in position to use Theorem \ref{fundamental} to assert that there exists a Young measure $\nu^k:\Omega \to \mathcal{M}(\R^d)$ 
with $\|\nu^k_x\|_{\mathcal{M}(\R^d)} = 1$ for a.e. $x\in \Omega$. 
such that 
\begin{align*}
& \int_{\R^d}\left(\frac{\lambda+\varphi_{A(x)}(\lambda)}{2}-a(x,\nabla T_ku)\right)\cdot \left(\frac{\lambda-\varphi_{A(x)}(\lambda)}{2}-\nabla T_ku\right)\, d\nu_x^{k,\epsilon}(\lambda)\\
& \ \ \stackrel{b}{\to} \int_{\R^d}\left(\frac{\lambda+\varphi_{A(x)}(\lambda)}{2}-a(x,\nabla T_ku)\right)\cdot \left(\frac{\lambda-\varphi_{A(x)}(\lambda)}{2}-\nabla T_ku\right)\, d\nu_x^{k}(\lambda)\ \mathrm{as}\ \epsilon\to 0.
\end{align*}
Now \eqref{eq:positiv} implies that 
$$
0\leq \int_{\R^d}\left(\frac{\lambda+\varphi_{A(x)}(\lambda)}{2}-a(x,\nabla T_ku)\right)\cdot \left(\frac{\lambda-\varphi_{A(x)}(\lambda)}{2}-\nabla T_ku\right)\, d\nu_x^{k}(\lambda) \ \ \textrm{for almost every}\ \ x\in \Omega. 
$$
On the other hand, 
\begin{align*}
& \int_{\R^d}a(x,\nabla T_ku)\cdot \left(\frac{\lambda-\varphi_{A(x)}(\lambda)}{2}-\nabla T_ku\right)\, d\nu_x^{k,\epsilon}(\lambda)\\
& \ \ \stackrel{b}{\to} \int_{\R^d}a(x,\nabla T_ku)\cdot \left(\frac{\lambda-\varphi_{A(x)}(\lambda)}{2}-\nabla T_ku\right)\, d\nu_x^{k}(\lambda)\ \mathrm{as}\ \epsilon\to 0.
\end{align*}
But 
\begin{align*}
& \int_{\R^d}a(x,\nabla T_ku)\cdot \left(\frac{\lambda-\varphi_{A(x)}(\lambda)}{2}-\nabla T_ku\right)\, d\nu_x^{k,\epsilon}(\lambda) = \int_{\R^d}a(x,\nabla T_ku)\cdot \left(\lambda-\nabla T_ku\right)\, d\mu_x^{k,\epsilon}(\lambda)\\
& \ \ = \int_{\R^d}a(x,\nabla T_ku)\cdot \lambda\, d\mu_x^{k,\epsilon}(\lambda) - a(x,\nabla T_ku)\cdot \nabla T_ku\\
& \ \  = \int_{\R^d}a(x,\nabla T_ku)\cdot \nabla T_ku^\epsilon\, \phi^\epsilon(s)ds - \int_{\R^d}a(x,\nabla T_ku)\cdot s\, \phi^\epsilon(s)ds - a(x,\nabla T_ku)\cdot \nabla T_ku.
\end{align*}
As on sets $E_l$ there holds $a(x,\nabla T_ku) \in L^\infty(E_l)$ by \eqref{eq:lmconv} and by the estimate 
$$
\left| \int_{\R^d}a(x,\nabla T_ku)\cdot s\, \phi^\epsilon(s)ds \right| \leq \epsilon c(l),
$$
we deduce that 
\begin{align*}
&
\int_{\R^d}a(x,\nabla T_ku)\cdot \left(\frac{\lambda-\varphi_{A(x)}(\lambda)}{2}-\nabla T_ku\right)\, d\nu_x^{k,\epsilon}(\lambda)\\
& \qquad  = \int_{\R^d}a(x,\nabla T_ku)\cdot \left(\lambda-\nabla T_ku\right)\, d\mu_x^{k,\epsilon}(\lambda) \stackrel{b}{\to} 0,
\end{align*}
and hence 
$$
\int_{\R^d}a(x,\nabla T_ku)\cdot \left(\frac{\lambda-\varphi_{A(x)}(\lambda)}{2}-\nabla T_ku\right)\, d\nu_x^{k}(\lambda) = 0\quad \textrm{for almost every}\ x\in \Omega.
$$
It follows that 
$$
0\leq \int_{\R^d}\frac{\lambda+\varphi_{A(x)}(\lambda)}{2}\cdot \left(\frac{\lambda-\varphi_{A(x)}(\lambda)}{2}-\nabla T_ku\right)\, d\nu_x^{k}(\lambda) \ \ \textrm{for almost every}\ \ x\in \Omega. 
$$
Now 
$$ \int_{\R^d}\frac{\lambda+\varphi_{A(x)}(\lambda)}{2}\cdot \nabla T_ku\, d\nu_x^{k,\epsilon}(\lambda) \stackrel{b}{\to} \int_{\R^d}\frac{\lambda+\varphi_{A(x)}(\lambda)}{2}\cdot \nabla T_ku\, d\nu_x^{k}(\lambda)\quad \mathrm{as}\quad \epsilon\to 0.
$$
But
$$
\int_{\R^d}\frac{\lambda+\varphi_{A(x)}(\lambda)}{2}\cdot \nabla T_ku\, d\nu_x^{k,\epsilon}(\lambda) = a^\epsilon(x,\nabla T_ku_\epsilon)\cdot \nabla T_ku \stackrel{b}{\to} \alpha_k \cdot \nabla T_ku.
$$
Hence
\begin{equation}\label{eq:important}
\alpha_k \cdot \nabla T_k(u)  \leq \int_{\R^d}\frac{\lambda+\varphi_{A(x)}(\lambda)}{2}\cdot \frac{\lambda-\varphi_{A(x)}(\lambda)}{2}\, d\nu_x^{k}(\lambda)  \ \ \textrm{for almost every}\ \ x\in \Omega. 
\end{equation}
Using Lemma \ref{lower} (note that $\frac{\lambda+\varphi_{A(x)}(\lambda)}{2}\cdot \frac{\lambda-\varphi_{A(x)}(\lambda)}{2} \geq -m(x)$)  we deduce that
$$  \int_{\Omega}\int_{\R^d}\frac{\lambda+\varphi_{A(x)}(\lambda)}{2}\cdot \frac{\lambda-\varphi_{A(x)}(\lambda)}{2}\, d\nu_x^{k}(\lambda)\, dx \\
 \leq \liminf_{\epsilon\to 0}\int_{\Omega}\int_{\R^d}\frac{\lambda+\varphi_{A(x)}(\lambda)}{2}\cdot \frac{\lambda-\varphi_{A(x)}(\lambda)}{2}\, d\nu_x^{k,\epsilon}(\lambda) \, dx.
$$
From the definition of measures $\nu_x^{k,\epsilon}$, we obtain 
$$
\int_{\Omega}\int_{\R^d}\frac{\lambda+\varphi_{A(x)}(\lambda)}{2}\cdot \frac{\lambda-\varphi_{A(x)}(\lambda)}{2}\, d\nu_x^{k,\epsilon}(\lambda) \, dx = \int_{\Omega}\int_{\R^d}a(x,\nabla T_ku_\epsilon-s)\cdot (\nabla T_ku_\epsilon-s)\phi^\epsilon(s)\, ds \, dx.
$$
Using \eqref{commutator}, it follows that 
$$
\int_{\Omega}\int_{\R^d}\frac{\lambda+\varphi_{A(x)}(\lambda)}{2}\cdot \frac{\lambda-\varphi_{A(x)}(\lambda)}{2}\, d\nu_x^{k}(\lambda)\, dx \leq \liminf_{\epsilon\to 0}\int_{\Omega}a^\epsilon(x,\nabla T_ku_\epsilon)\cdot \nabla T_ku_\epsilon \, dx.
$$
We use \eqref{limsup} to deduce that
$$
\int_{\Omega}\int_{\R^d}\frac{\lambda+\varphi_{A(x)}(\lambda)}{2}\cdot \frac{\lambda-\varphi_{A(x)}(\lambda)}{2}\, d\nu_x^{k}(\lambda)\, dx \leq \int_{\Omega}\alpha^k \cdot \nabla T_ku \, dx.
$$
This inequality together with \eqref{eq:important} imply that 
$$
\alpha_k \cdot \nabla T_k(u)  = \int_{\R^d}\frac{\lambda+\varphi_{A(x)}(\lambda)}{2}\cdot \frac{\lambda-\varphi_{A(x)}(\lambda)}{2}\, d\nu_x^{k}(\lambda)  \ \ \textrm{for almost every}\ \ x\in \Omega. 
$$
But as 
$$ \int_{\R^d}\frac{\lambda+\varphi_{A(x)}(\lambda)}{2}\cdot \frac{\lambda-\varphi_{A(x)}(\lambda)}{2}\, d\nu_x^{k,\epsilon}(\lambda)\\
\stackrel{b}{\to} \int_{\R^d}\frac{\lambda+\varphi_{A(x)}(\lambda)}{2}\cdot \frac{\lambda-\varphi_{A(x)}(\lambda)}{2}\, d\nu_x^{k}(\lambda)\ \mathrm{as}\ \epsilon\to 0,
$$
we deduce that
$$ \int_{\R^d}\frac{\lambda+\varphi_{A(x)}(\lambda)}{2}\cdot \frac{\lambda-\varphi_{A(x)}(\lambda)}{2}\, d\nu_x^{k,\epsilon}(\lambda)\\
\stackrel{b}{\to} \alpha_k \cdot \nabla T_ku \ \mathrm{as}\ \epsilon\to 0,
$$
In other words 
$$ \int_{\R^d}a(x,\nabla T_k(u_\epsilon)-s)\cdot (\nabla T_k(u_\epsilon)-s)\phi^\epsilon(s)\, ds\\
\stackrel{b}{\to} \alpha_k \cdot \nabla T_ku \ \mathrm{as}\ \epsilon\to 0,
$$
Now, \eqref{commutator} implies that 
$$
a^\epsilon(x,\nabla T_ku_\epsilon)\cdot \nabla T_ku_\epsilon\\
\stackrel{b}{\to} \alpha_k \cdot \nabla T_ku \ \mathrm{as}\ \epsilon\to 0.
$$
By \eqref{limsup} and \eqref{eq:epsilon_est} we are in position to use Lemma \ref{biting_weak} which implies the required assertion \eqref{weak_l1}.
 
\bigskip 

\noindent \textit{Step 8. Minty trick.} The aim of this step is to prove that $\alpha_k(x)\in A(x,(\nabla T_ku)(x))$ for a.e. $x\in \Omega$, i.e., $\alpha_k$ is a selection of $A(\cdot,\nabla T_ku)$.  
To this end consider $\xi\in L^\infty(\Omega)^d$. Then $a(x,\xi(x)) \in L_{\widetilde{M}}(\Omega)$ is a  selection of $A(x,\xi(x))$. The monotonicity of $A$ implies that 
$$
\int_{B(0,\epsilon)}\phi^\epsilon(s)(a(x,\nabla T_ku_\epsilon-s)-a(x,\xi(x)))\cdot (\nabla T_ku_\epsilon-s-\xi(x))\, ds \geq 0\quad \textrm{for almost every}\quad x\in \Omega.
$$
It follows that
\begin{align*}
&(a^\epsilon(x,\nabla T_ku_\epsilon)-a(x,\xi(x)))\cdot (\nabla T_ku_\epsilon-\xi(x)) \\
&\quad -  \int_{B(0,\epsilon)}\phi^\epsilon(s)a(x,\nabla T_ku_\epsilon-s)\cdot s\, ds + \int_{B(0,\epsilon)}\phi^\epsilon(s)a(x,\xi(x))\cdot s\, ds  \geq 0\ \  \textrm{for almost every}\ \  x\in \Omega.
\end{align*} 
There exists a sequence of sets $E_1\subset E_2 \subset \ldots \subset E_l \subset \ldots \subset \Omega$ with $\lim_{l\to \infty}|\Omega\setminus E_l| = 0$ such that on every $E_l$ there holds $a(x,\xi(x))\in L^\infty (E_l)$. We multiply the above inequality by a nonnegative function $\eta\in C^\infty(\overline{\Omega})$, and integrate it with respect to $x$ over $E_l$, whence
\begin{align}
&\int_{E_l}(a^\epsilon(x,\nabla T_ku_\epsilon)-a(x,\xi(x)))\cdot (\nabla T_ku_\epsilon-\xi(x))\eta(x)\, dx \label{eq:limitaux_1} \\
& \qquad \geq  \int_{E_l}\int_{B(0,\epsilon)}\phi^\epsilon(s)a(x,\nabla T_ku_\epsilon-s)\cdot s\, \eta(x)ds\, dx  - \int_{E_l}\int_{B(0,\epsilon)}\phi^\epsilon(s)a(x,\xi(x))\cdot s\eta(x)\, ds\, dx\\
& \qquad  = I_1- I_2.\nonumber
\end{align} 
Let us pass with $\epsilon\to 0$. Estimate \eqref{commutator} implies that $\lim_{\epsilon\to 0} I_1 = 0$. We need to deal with $I_2$, namely 
$$
|I_2| = \left|\int_{\Omega}\int_{B(0,\epsilon)}\phi^\epsilon(s)a(x,\xi(x))\cdot s \eta(x)\, ds\, dx\right| \leq \left|\int_{\Omega}\int_{B(0,\epsilon)}\phi^\epsilon(s)|a(x,\xi(x))||\eta(x)| \epsilon\, ds\, dx\right| \leq C \epsilon,
$$
whence 
\begin{equation}\label{eq:limitaux_2}
\lim_{\epsilon\to 0} I_2 = 0.
\end{equation}
Using the fact that $\lim_{\epsilon \to 0} I_1-I_2 = 0$, as well as \eqref{eq:lmconv}, \eqref{aconv}, and \eqref{weak_l1} in \eqref{eq:limitaux_1} we deduce that
$$
\int_{E}(\alpha_k(x)-a(x,\xi(x)))\cdot ((\nabla T_ku)(x) -\xi(x)) \eta(x)\, dx \geq 0.$$
Since the above assertion is valid for any nonnegative $\eta \in C^\infty(\overline{\Omega})$, it follows that 
$$
(\alpha_k(x)-a(x,\xi(x)))\cdot ((\nabla T_ku)(x) -\xi(x)) \geq 0
$$ 
for almost every $x\in E_l$, and hence for almost every $x\in \Omega$, and every $\xi \in L^\infty(\Omega)^d$. We take $\xi(x) = z$, a constant vector of rational numbers in $\R^d$. The set of $x\in \Omega$ such that the above inequality holds for every rational $s$ has a full measure. It follows that for almost every $x$ the set $$\{ (z, a(x,z))\,:\ z\in \textrm{a dense set in}\, \R^d \} \cup \{ ((\nabla T_ku)(x), \alpha_k(x))  \}$$ is a monotone graph, and it can be extended to a maximally monotone graph $\widetilde{A}(x)$. Now, as $A$ is a maximally monotone graph \cite[Corollary 1.5]{Alberti} (also see \cite[Lemma 2.2 and Corollary 2.3]{Bulicek}) implies that $\widetilde{A} = A$, and hence $\alpha_k(x) \in A(x,(\nabla T_ku)(x))$ for almost every $x\in \Omega$. The assertion is proved. 

\bigskip

\noindent \textit{Step 9. The solution satisfies \eqref{renorm}.} We prove that $u$ satisfies the equation in the renormalized sense \eqref{renorm}. To this end let $h\in C^1_c(\R)$ and let $w\in W^{1,\infty}_0(\Omega)$. We test \eqref{eq:weak} with $h(u_\epsilon) w$. By a similar argument as in Step 2 such choice of test function is allowed. This leads us to the equation
$$
\int_{\Omega}a^\epsilon(x,\nabla u_\epsilon)\cdot \nabla (h(u_\epsilon)  w)\, dx = \int_{\Omega} T_{1/\epsilon} f h(u_\epsilon)  w\, dx.
$$
Using the Lebesgue dominated convergence theorem, we deduce that
$$
\lim_{\epsilon\to 0}\int_{\Omega} T_{1/\epsilon}f h(u_\epsilon) w\, dx = \int_{\Omega} f  h(u) w\, dx.
$$
To pass to the limit on the left-hand side note that, for every $K$ such that $\mathrm{supp}(h) \subset [-K,K]$
\begin{align*}
&\int_{\Omega}a^\epsilon(x,\nabla u_\epsilon)\cdot \nabla (h(u_\epsilon)  w)\, dx =  \int_{\Omega}a^\epsilon(x,\nabla  T_Ku_\epsilon)\cdot \nabla T_Ku_\epsilon h'(u_\epsilon)   w\, dx\\
&\quad  + \int_{\Omega}a^\epsilon(x,\nabla T_Ku_\epsilon)\cdot \nabla w h(u)\, dx + \int_{\Omega}(h(u_\epsilon)-h(u))a^\epsilon(x,\nabla T_Ku_\epsilon)\cdot \nabla    w\, dx = I_1 + I_2 + I_3.
\end{align*}
Now by \eqref{weak_l1} and the pointwise convergence and uniform boundedness in $L^\infty(\Omega)$ of $h'(u_\epsilon) w$ by Lemma \ref{convergence_lemma} it follows that
$$
\lim_{\epsilon\to 0} I_1 = \int_{\Omega}\alpha_K\cdot \nabla T_Ku h'(u)   w\, dx.
$$
To deal with $I_2$ note that the convergence \eqref{aconv} and the fact $\nabla w h(u) \in L^\infty(\Omega)^d$ implies that 
$$
\lim_{\epsilon\to 0} I_2 = \int_{\Omega}\alpha_K\cdot \nabla w h(u)\, dx.
$$
Finally, equiintegrability of $\{ a^\epsilon(x,\nabla T_Ku_\epsilon) \}_{\epsilon>0}$ 
and uniform boundedness in $L^\infty(\Omega)^d$ and pointwise convergence to zero of 
$\nabla w (h(u_\epsilon)-h(u))$ imply that 
$$
\lim_{\epsilon\to 0} I_3 = 0.
$$
Concluding, we obtain
$$
\int_{\Omega}\alpha_K\cdot \nabla (h(T_Ku)  w)\, dx = \int_{\Omega} f h(u)  w\, dx.
$$
But, as $\mathrm{supp}(h) \subset [-K,K]$, \eqref{renorm} follows. 
   
   \bigskip 
   
   \noindent \textit{Step 10. Controlled radiation condition.} In the last step of the proof we show that condition \eqref{renorm_control} is satisfied. As $\nabla u_\epsilon = 0$ a.e. in the set $\{ x\in \Omega\,:\ |u_\epsilon| \in \{ l,l+1 \} \}$, the estimate \eqref{cont_rad} implies that
   $$
   \lim_{l\to \infty} \sup_{\epsilon>0} \int_{\{ l-1 < |u_\epsilon| < l+2 \}} a^\epsilon(x,\nabla u_\epsilon)\cdot \nabla u_\epsilon\, dx = 0.
   $$ 
   Now, define continuous functions $g_l:\R\to \R$ by
   $$
   g_l(r) = \begin{cases}
   1& \mathrm{if}\ l\leq |r| \leq l+1,\\
   0& \mathrm{if} \ |r|<l-1\ \mathrm{or}\ |r| > l+2,\\
   \mathrm{affine} & \mathrm{otherwise}.
   \end{cases}
   $$
      There holds
      $$
      \int_{ \{ l<|u|<l+1 \} } \alpha\cdot \nabla u + m(x)\, dx \leq  \int_{ \Omega } g_l(u)(\alpha_{l+2}\cdot \nabla T_{l+2}u + m(x))\, dx.
      $$ 
      Using \eqref{weak_l1}, the pointwise convergence \eqref{conv:pointwise}, Lemma \ref{convergence_lemma}, the estimate \eqref{cont_rad} and the monotone convergence theorem we deduce that
     \begin{align*}
      & 0\leq \int_{ \{ l<|u|<l+1 \} } \alpha\cdot \nabla u + m(x)\, dx \leq  \lim_{\epsilon\to 0}\int_{ \Omega } g_l(u_\epsilon)(a^\epsilon(x,\nabla T_{l+2}u_\epsilon)\cdot \nabla T_{l+2}u_\epsilon + m(x))\, dx\\
      &\qquad \leq \lim_{\epsilon\to 0}\int_{ \{ l-1< |u_\epsilon| < l+2 \} } a^\epsilon(x,\nabla T_{l+2}u_\epsilon)\cdot \nabla T_{l+2}u_\epsilon + m(x)\, dx \leq \gamma(l),
      \end{align*} 
      where $\gamma(l) \to 0$ as $l\to \infty$. This means that 
      $$
      \lim_{l\to \infty} \int_{ \{ l<|u|<l+1 \} } \alpha\cdot \nabla u + m(x)\, dx = 0,
      $$
      which implies the required assertion. 

\section{Proof of Theorem \ref{thm:uniqueness}: uniqueness.}\label{sec:uniq}

Let $(u_1,\alpha_1)$ and $(u_2,\alpha_2)$ be two renormalized solutions. We will prove that $u_1=u_2$ for a.e. $x\in \Omega$. 
\begin{remark}
Note that only $u$ can be proved to be unique and $\alpha$ can stay nonunique in our framework. To this end consider  $\Omega = (0,1)$, $f\equiv 0$, and $A:(0,1)\times \R\to 2^\R$ given by  
$$
A(x,\xi) = \begin{cases}
-\xi\ \ \textrm{for}\ \ \xi<0,\\
[-1,1]\ \ \textrm{for}\ \ \xi=0,\\
\xi\ \ \textrm{for}\ \ \xi>0.
\end{cases}
$$
We are looking for weak solutions $u$ belonging to $H^1_0((0,1))$ of the problem $-\textrm{div} \, \alpha = 0$ with $\alpha$ being a measurable selection of $A(u_x)$. Clearly the solution   $u\equiv 0$ is unique, but any constant function $\alpha \equiv c$ with $c\in [-1,1]$ has both zero divergence and is a selection of $A(u_x)$.    
\end{remark}
 We take $h=h_l$ in \eqref{renorm} and test this equation written for $u_1$ with $T_k(T_{l+1}u_1-T_{l+1}u_2)$. Note that in case (C2) this function can be used as the test function in \eqref{renorm} as it belongs to $L^\infty(\Omega)\cap V^M_0$. On the other hand, if (C1) holds, then we consider only approximable solutions whence both $u_1$ and $u_2$ are obtained as the limits of problems \eqref{approx_1}--\eqref{approx_2}. The solutions of these approximative problems are in turn the limits of the Galerkin solutions in the sense \eqref{eq:galerkin_conv}. As gradients of the $k$-th truncations of all these Galerkin solutions are uniformly bounded in $L_M(\Omega)$ (and weak-* topology of $L_{M}(\Omega)$ on bounded sets is metrizable as this space has separable predual space), we can use the diagonal argument to obtain the sequence of functions belonging to $W^{1,\infty}(\Omega)$ which converge to $T_k(T_{l+1}u_1-T_{l+1}u_2)$ in the sense \eqref{eq:convergence_c1}. This justifies the possibility of taking   $T_k(T_{l+1}u_1-T_{l+1}u_2)$ as a test function in \eqref{renorm}.
 The  same $h$ and test function are taken in the equation written for $u_2$ and both equations are subtracted from each other. This yields
\begin{align*}
& \int_{\Omega}h_l'(u_1) \alpha_1\cdot  \nabla T_{l+1}u_1 T_k(T_{l+1}u_1-T_{l+1}u_2)\, dx - \int_{\Omega}h_l'(u_2) \cdot  \alpha_2 \nabla T_{l+1}u_2 T_k(T_{l+1}u_1-T_{l+1}u_2) \, dx\\
& + \int_{\Omega} (\alpha_1 -\alpha_2)\cdot \nabla  T_k(T_{l+1}u_1-T_{l+1}u_2)   \, dx \\
& + \int_{\Omega} (1-h_l(u_2))\alpha_2\cdot \nabla  T_k(T_{l+1}u_1-T_{l+1}u_2)   \, dx - \int_{\Omega} (1-h_l(u_1))  \alpha_1 \cdot \nabla  T_k(T_{l+1}u_1-T_{l+1}u_2)   \, dx\\
&  = \int_{\Omega} f(h_l(u_1)-h_l(u_2)) T_k(T_{l+1}u_1-T_{l+1}u_2)\, dx.
\end{align*}
We rewrite this equation as 
$$
I_1 - I_2 + I_3 + I_4 - I_5 = I_6,
$$
and pass to the limit with $l\to \infty$. It is clear that the Lebesgue dominated convergence theorem implies that 
$$
\lim_{l\to \infty} I_6 = 0.
$$
We pass to the limit in $I_1$ and $I_2$. As the argument for both terms is analogous we deal only with $I_1$. Clearly,
$$
|I_1| \leq k \int_{\{ l\leq |u_1|\leq l+1 \}} \alpha_1\cdot  \nabla T_{l+1}u_1 \, dx  + 2 k \int_{\{ l\leq |u_1|\leq l+1\}} m\, dx,
$$
and by \eqref{renorm_control} as well as the fact that the measure of sets $\{l\leq |u_1| \leq l+1 \}$ tends to zero as $l\to \infty$ we obtain 
$$
\lim_{l\to \infty} |I_1| = 0.
$$
Now we pass to the limit in $I_4$ and $I_5$. As the argument for both terms is analogous we deal only with $I_4$. 
\begin{align*}
&|I_4| \leq \int_{\{ |u_2|\geq l \} \cap \{ 0 < |T_{l+1}u_1-T_{l+1}u_2| < k\}}  |\alpha_2\cdot \nabla  T_{l+1}u_1| + \alpha_2\cdot \nabla T_{l+1}u_2 + 2m    \, dx \\
& \qquad \leq  \int_{\{ l\leq |u_2|\leq l +k + 1 \}}  \alpha_2\cdot \nabla T_{l+1}u_2\, dx \\
& \qquad \qquad + 2\int_{\{ l\leq |u_2| \}}m\, dx + \int_{\{ l\leq |u_2|\leq l +k + 1 \} \cap \{l-k\leq |u_1|\leq l+1\}} |\alpha_2\cdot \nabla  T_{l+1}u_1|dx.
\end{align*}
The first integral tends to zero by \eqref{renorm_control}, the second one, by the fact that  the measure of sets $\{l\leq |u_2|\}$ tends to zero as $l\to \infty$. To deal with the last one observe that 
\begin{align*}
&
\int_{\{ l\leq |u_2|\leq l +k + 1 \} \cap \{l-k\leq |u_1|\leq l+1\}} |\alpha_2\cdot \nabla  T_{l+1}u_1|dx \\
& \leq \int_{\{ l\leq |u_2|\leq l +k + 1 \}} \widetilde{M}(x,\alpha_2)\, dx + \int_{\{l-k\leq |u_1|\leq l+1\}} M(x, \nabla  T_{l+1}u_1)\, dx\\
& \leq \frac{1}{c_A}\int_{\{ l\leq |u_2|\leq l +k + 1 \}} \alpha_2\cdot \nabla T_{l+k+1}u_2\, dx + \frac{1}{c_A}\int_{\{ l\leq |u_2|\}}m\, dx\\
& \qquad  + \frac{1}{c_A}\int_{\{l-k\leq |u_1|\leq l+1\}} \alpha_1 \cdot \nabla  T_{l+1}u_1\, dx + \frac{1}{c_A} \int_{\{ l-k\leq |u_1| \}}m\, dx, 
\end{align*}
and all terms converge to zero either by \eqref{renorm_control}, or by the fact that we integrate $m$ over the sets with measure shrinking to zero. We deal with $I_3$. Let $l_0$ be arbitrary
and let $l+1\geq l_0$. There holds 
\begin{align*}
&
I_3 = \int_{\{ 0 < |T_{l+1}u_1-T_{l+1}u_2| < k\}} (\alpha_1 -\alpha_2)\cdot \nabla  (T_{l+1}u_1-T_{l+1}u_2)   \, dx \\
& \qquad \geq \int_{\{ 0 < |T_{l+1}u_1-T_{l+1}u_2| < k\} \cap \{|u_1|\leq l_0\}\cap \{ |u_2|\leq l_0 \}} (\alpha_1 -\alpha_2)\cdot \nabla  (T_{l+1}u_1-T_{l+1}u_2)   \, dx\\
& = \int_{\{ 0 < |u_1-u_2| < k\} \cap \{|u_1|\leq l_0\}\cap \{ |u_2|\leq l_0 \}} (\alpha_1 -\alpha_2)\cdot \nabla  (T_{l+1}u_1-T_{l+1}u_2)   \, dx.
\end{align*}
As  we know that 
$\lim_{l\to \infty} I_3 = 0$ it follows that 
$$
0 = \int_{\{ 0 < |u_1-u_2| < k\} \cap \{|u_1|\leq l_0\}\cap \{ |u_2|\leq l_0 \}} (\alpha_1 -\alpha_2)\cdot \nabla  (u_1-u_2)   \, dx,
$$
which means, by the strict monotonicity of $A$ that the set $\{ 0 < |u_1-u_2| < k\} \cap \{|u_1|\leq l_0\}\cap \{ |u_2|\leq l_0 \}$ has null measure. As $k$ and $l_0$ are arbitrary, we deduce that $u_1 = u_2$ a.e. in $\Omega$.

\section{Proof of Theorem \ref{thm:renorm_weak}: boundedness of renormalized solutions.}\label{sec:renorm_weak}

 As  in case $d=1$ there holds $W^{1,1}(\Omega)\subset L^\infty(\Omega)$ we restrict in the sequel to $d\geq 2$. The next argument follows the lines of the proof of \cite[Theorem 3.1]{Cianchi1} with a modification related to account for the presence of $m$ in (A3). Choose $t\in (0,\esssup_{x\in \Omega}|u_\epsilon(x)|)$. We define the function
$$
u^{\epsilon}_{t,h}(x) = \begin{cases}
0 \ \ \textrm{if}\ \ |u_\epsilon(x)| \leq t,\\
(|u_\epsilon(x)| - t) \textrm{sign}(u_\epsilon(x))\ \ \textrm{if}\ \ t<|u_\epsilon(x)| \leq t+h,\\
h\ \textrm{sign}(u_\epsilon(x))\ \ \textrm{if}\ \ t+h < |u_\epsilon(x)|.
\end{cases}
$$
Then $\nabla u^\epsilon_{t,h} = \chi_{\{t<|u_\epsilon|< t+h\}}\nabla u_\epsilon$. Testing the approximating equation with $u^{\epsilon}_{t,h}$, we obtain
\begin{align*}
& \int_{\{t<|u_\epsilon|\leq t+h\}}a^{\epsilon}(x,\nabla u_{\epsilon})\cdot \nabla u_\epsilon \, dx\\
& \ \  = \int_{\{t<|u_\epsilon|\leq t+h\}}(|u_\epsilon(x)| - t) \textrm{sign}(u_\epsilon(x)) T^{1/\epsilon}f(x)\, dx + \int_{\{t+h<|u_\epsilon|\}}h\ \textrm{sign}(u_\epsilon(x))T^{1/\epsilon}f(x)\, dx\\
& \leq h\int_{\{t<|u_{\epsilon}|\}}|f(x)|\, dx.  
\end{align*}
We use \eqref{eq:epsilon_est} and Remark \ref{rem:homo_minorant}, whence
$$
c_A\int_{\{t<|u_\epsilon|\leq t+h\}}M_1(\nabla u_{\epsilon}(x)) \, dx - \int_{\{t<|u_\epsilon|\leq t+h\}}m(x) \, dx \leq  h\int_{\{t<|u_{\epsilon}|\}}|f(x)|\, dx.
$$
Anizotropic P\'{o}lya--Szeg\"{o}  inequality, cf. \cite[Theorem 3.5 and formula (5.4)]{Cianchi} and \cite[formula (3.7)]{Alberico_Cianchi}, implies that
\begin{equation}\label{imp}
c_A\int_{\{t<u_\epsilon^*\leq t+h\}}(M_1)_\blacklozenge(|\nabla u_{\epsilon}^*(x)|) \, dx  \leq  h\int_{\{t<|u_{\epsilon}|\}}|f(x)|\, dx +\int_{\{t<|u_\epsilon|\leq t+h\}}m(x) \, dx.
\end{equation}
Now, as $\mu_{u_\epsilon} = \mu_{u_\epsilon^*}$, the Jensen inequality implies that
$$
(M_1)_\blacklozenge\left(\frac{\int_{\{t<u_\epsilon^*\leq t+h\}}|\nabla u_{\epsilon}^*(x)| \, dx}{\mu_{u_\epsilon}(t)-\mu_{u_\epsilon}(t+h)}\right)\leq \frac{\int_{\{t<u_\epsilon^*\leq t+h\}}(M_1)_\blacklozenge(|\nabla u_{\epsilon}^*(x)|) \, dx}{\mu_{u_\epsilon}(t)-\mu_{u_\epsilon}(t+h)}.
$$
We deduce
$$
(M_1)_\blacklozenge\left(\frac{\int_{\{t<u_\epsilon^*\leq t+h\}}|\nabla u_{\epsilon}^*(x)| \, dx}{\mu_{u_\epsilon}(t)-\mu_{u_\epsilon}(t+h)}\right)\leq \frac{1}{c_A}\frac{ h\int_{\{t<|u_{\epsilon}|\}}|f(x)|\, dx +\int_{\{t<|u_\epsilon|\leq t+h\}}m(x) \, dx.}{\mu_{u_\epsilon}(t)-\mu_{u_\epsilon}(t+h)}.
$$
Now the coarea formula and the relation between the volume of $d$ dimensional ball and area of $d-1$ dimensional sphere, cf \cite[formula (5.9)]{Cianchi} imply that
$$
\int_{\{t<u_\epsilon^*\leq t+h\}}|\nabla u_{\epsilon}^*(x)| \, dx = d \omega_d^{1/d}\int_t^{t+h}\mu_{u_\epsilon}(\tau)^{\frac{d-1}{d}}\, d\tau,
$$
whence
$$
(M_1)_\blacklozenge\left(\frac{d \omega_d^{1/d}\int_t^{t+h}\mu_{u_\epsilon}(\tau)^{\frac{d-1}{d}}\, d\tau}{\mu_{u_\epsilon}(t)-\mu_{u_\epsilon}(t+h)}\right)\leq \frac{1}{c_A}\frac{ h\int_{\{t<|u_{\epsilon}|\}}|f(x)|\, dx +\int_{\{t<|u_\epsilon|\leq t+h\}}m(x) \, dx}{\mu_{u_\epsilon}(t)-\mu_{u_\epsilon}(t+h)}.
$$
As by (W2) $m\in L^\infty(\Omega)$, we obtain $\int_{\{t<|u_\epsilon|\leq t+h\}}m(x) \, dx \leq \|m\|_{L^\infty(\Omega)}(\mu_{u_\epsilon}(t)-\mu_{u_\epsilon}(t+h))$, and 
$$
(M_1)_\blacklozenge\left(\frac{d \omega_d^{1/d}\int_t^{t+h}\mu_{u_\epsilon}(\tau)^{\frac{d-1}{d}}\, d\tau}{\mu_{u_\epsilon}(t)-\mu_{u_\epsilon}(t+h)}\right)\leq \frac{1}{c_A}\left(\|m\|_{L^\infty(\Omega)}+\frac{ h\int_{\{t<|u_{\epsilon}|\}}|f(x)|\, dx}{\mu_{u_\epsilon}(t)-\mu_{u_\epsilon}(t+h)}\right).
$$
Now, let $\beta>0$. If 
$$
\|m\|_{L^\infty(\Omega)}\leq \beta\frac{ h\int_{\{t<|u_{\epsilon}|\}}|f(x)|\, dx}{\mu_{u_\epsilon}(t)-\mu_{u_\epsilon}(t+h)},
$$
then 
$$
(M_1)_\blacklozenge\left(\frac{d \omega_d^{1/d}\int_t^{t+h}\mu_{u_\epsilon}(\tau)^{\frac{d-1}{d}}\, d\tau}{\mu_{u_\epsilon}(t)-\mu_{u_\epsilon}(t+h)}\right)\leq \frac{1+\beta}{c_A}\frac{ h\int_{\{t<|u_{\epsilon}|\}}|f(x)|\, dx}{\mu_{u_\epsilon}(t)-\mu_{u_\epsilon}(t+h)},
$$
and 
\begin{align*}
& \Psi_\blacklozenge\left(\frac{d \omega_d^{1/d}\int_t^{t+h}\mu_{u_\epsilon}(\tau)^{\frac{d-1}{d}}\, d\tau}{\mu_{u_\epsilon}(t)-\mu_{u_\epsilon}(t+h)}\right) = \frac{(M_1)_\blacklozenge\left(\frac{d \omega_d^{1/d}\int_t^{t+h}\mu_{u_\epsilon}(\tau)^{\frac{d-1}{d}}\, d\tau}{\mu_{u_\epsilon}(t)-\mu_{u_\epsilon}(t+h)}\right)}{\frac{d \omega_d^{1/d}\int_t^{t+h}\mu_{u_\epsilon}(\tau)^{\frac{d-1}{d}}\, d\tau}{\mu_{u_\epsilon}(t)-\mu_{u_\epsilon}(t+h)}}\\
& \ \ \ \ \qquad \leq \frac{1+\beta}{c_A}\frac{ h\int_{\{t<|u_{\epsilon}|\}}|f(x)|\, dx}{d \omega_d^{1/d}\int_t^{t+h}\mu_{u_\epsilon}(\tau)^{\frac{d-1}{d}}\, d\tau},
\end{align*}
whence
$$
\frac{d \omega_d^{1/d}\int_t^{t+h}\mu_{u_\epsilon}(\tau)^{\frac{d-1}{d}}\, d\tau}{\mu_{u_\epsilon}(t)-\mu_{u_\epsilon}(t+h)} \leq \Psi_\blacklozenge^{-1}\left(\frac{1+\beta}{c_A}\frac{ h\int_{\{t<|u_{\epsilon}|\}}|f(x)|\, dx}{d \omega_d^{1/d}\int_t^{t+h}\mu_{u_\epsilon}(\tau)^{\frac{d-1}{d}}\, d\tau}\right).
$$
On the other hand, if 
$$
\|m\|_{L^\infty(\Omega)} > \beta\frac{ h\int_{\{t<|u_{\epsilon}|\}}|f(x)|\, dx}{\mu_{u_\epsilon}(t)-\mu_{u_\epsilon}(t+h)},
$$
then
$$
\frac{d \omega_d^{1/d}\int_t^{t+h}\mu_{u_\epsilon}(\tau)^{\frac{d-1}{d}}\, d\tau}{\mu_{u_\epsilon}(t)-\mu_{u_\epsilon}(t+h)} \leq (M_1)_\blacklozenge^{-1}\left(\frac{1}{c_A}\left(1+\frac{1}{\beta}\right)\|m\|_{L^\infty(\Omega)}\right).
$$
In either case
\begin{align*}
& \frac{d \omega_d^{1/d}\frac{1}{h}\int_t^{t+h}\mu_{u_\epsilon}(\tau)^{\frac{d-1}{d}}\, d\tau}{\frac{\mu_{u_\epsilon}(t)-\mu_{u_\epsilon}(t+h)}{h}} \\
& \ \ \leq \left(M_1\right)_\blacklozenge^{-1}\left(\frac{1}{c_A}\left(1+\frac{1}{\beta}\right)\|m\|_{L^\infty(\Omega)}\right) + \Psi_\blacklozenge^{-1}\left(\frac{1+\beta}{c_A}\frac{ \int_{\{t<|u_{\epsilon}|\}}|f(x)|\, dx}{d \omega_d^{1/d}\frac{1}{h}\int_t^{t+h}\mu_{u_\epsilon}(\tau)^{\frac{d-1}{d}}\, d\tau}\right),
\end{align*}
for every $\beta>0$ and almost every $t\in (0,\esssup_{x\in \Omega}|u_\epsilon(x)|)$.
Passing to the limit as $h\to 0^+$ yields
$$
1\leq \frac{-\mu_{u_\epsilon}'(t)}{d \omega_d^{1/d}\mu_{u_\epsilon}(t)^{\frac{d-1}{d}}}\left(\left(M_1\right)_\blacklozenge^{-1}\left(\frac{1+\beta}{c_A \beta}\|m\|_{L^\infty(\Omega)}\right) + \Psi_\blacklozenge^{-1}\left(\frac{1+\beta}{c_A}\frac{ \int_{0}^{\mu_{u_\epsilon}(t)}f^*(s)\, ds}{d \omega_d^{1/d}\mu_{u_\epsilon}(t)^{\frac{d-1}{d}}}\right)\right).
$$
After integrating this inequality from $0$ to $\esssup_{x\in \Omega}|u_\epsilon(t)|$ and choosing $\beta = 1- \lambda$, where $\lambda$ is as in \eqref{ass:f}, we obtain
\begin{align*}
& \esssup_{x\in \Omega}|u_\epsilon(t)|\leq \int_{0}^{|\Omega|}\frac{1}{d \omega_d^{1/d}r^{\frac{d-1}{d}}}\left(\left(M_1\right)_\blacklozenge^{-1}\left(\frac{\lambda}{\lambda-1}\|m\|_{L^\infty(\Omega)}\right) + \Psi_\blacklozenge^{-1}\left(\lambda\frac{ \int_{0}^rf^*(s)\, ds}{d \omega_d^{1/d}r^{\frac{d-1}{d}}}\right)\right)\, dr\\
& \ \ \leq \left(M_1\right)_\blacklozenge^{-1}\left(\frac{\lambda}{c_A(\lambda-1)}\|m\|_{L^\infty(\Omega)}\right)\left(\frac{|\Omega|}{\omega_d}\right)^{\frac{1}{d}} +   \frac{1}{d \omega_d^{1/d}}\int_{0}^{|\Omega|} {r^{\frac{1}{d}-1}}\Psi_\blacklozenge^{-1}\left(\frac{\lambda}{c_A d \omega_d^{1/d}}\frac{ \int_{0}^rf^*(s)\, ds }{r^{\frac{d-1}{d}}}\right)\, dr.
\end{align*}
By (W1) this implies that $u_\epsilon \in L^\infty(\Omega)$, and norm $\|u_\epsilon\|_{L^\infty(\Omega)}$ is uniformly bounded with respect to $\epsilon$, which yields the required assertion.

  \bibliography{musielak_bib}
  
  \appendix

  \section{$N$-functions and Musielak--Orlicz spaces.}\label{appa}
  \noindent \textbf{$N$-functions}. We start from the definition of $N$-functions.  
  \begin{definition}\label{def:N}
  	The function $M:\Omega\times\mathbb{R}^d \to [0,\infty)$ is an $N$-function if
  	\begin{itemize}
  		\item[(N1)] $M$ is Carath\'{e}odory, that is, $M(\cdot,\xi)$ is measurable for every $\xi\in \mathbb{R}^d$ and $M(x,\cdot)$ is continuous for almost every $x\in \Omega$,
  		\item[(N2)] $M(x,\xi) = M(x,-\xi)$ for every $\xi\in \mathbb{R}^d$ a.e. in $\Omega$ and $M(x,\xi) = 0$ is and only if $\xi=0$ a.e. in $\Omega$,
  		\item[(N3)] $M(x,\cdot)$ is convex for almost every $x\in \Omega$,
  		\item[(N4)] $M$ has superlinear growth in $\xi$ at zero and infinity, that is,
  		$$
  		\lim_{|\xi|\to 0}\esssup_{x\in \Omega}\frac{M(x,\xi)}{|\xi|} = 0\qquad \textrm{and}\qquad \lim_{|\xi|\to \infty}\essinf_{x\in \Omega}\frac{M(x,\xi)}{|\xi|} = \infty. 
  		$$ 
  		\item[(N5)] $
  		\essinf_{x\in \Omega}\inf_{|\xi|=s}M(x,\xi) > 0 \  \textrm{for every}\ s\in (0,\infty)$\\  and $\esssup_{x\in \Omega}M(x,\xi) < \infty \ \textrm{for every}\ \xi\neq 0.
  		$ 
  	\end{itemize}
  \end{definition}
  If $d=1$ we will use small letters, such as $m$, to denote $N$-functions, we will call such functions one dimensional $N$-functions. In such case by assumption (N2) there holds $m(x,-\xi) = m(x,\xi)$ for every $\xi\in \R$ so it is enough to define one dimensional $N$-function for $\xi\in [0,\infty)$.  Indeed we will assume that one dimensional $N$-functions are defined only on $\Omega\times [0,\infty)$, and with some abuse of notation we will define one dimensional $N$-functions sometimes on the whole line and sometimes on the half-line. If an $N$-function does not depend on $x$ we will call it homogeneous. So, homogeneous $N$-function leads from $\R^d$ to $ [0,\infty)$ and homogeneous one dimensional $N$-function from $[0,\infty)$ to $[0,\infty)$. 
  If $M:\Omega\times \R^d\to \R$ then its \textit{complementary function} $\widetilde{M}$ is defined by the Fenchel transform in the following way
  \begin{equation}\label{fenchel}
  \widetilde{M}(x,\eta) = \sup_{\xi\in \mathbb{R}^d} \left\{ \xi\cdot \eta - M (x,\xi) \right\}. 
  \end{equation}
  
  In the following results we discuss the assumptions in the definition of an $N$-function and establish some of its properties. We remark that the behavior of an $N$-function close to the origin  is not important for the main results of the present paper, as it does not influence the generated Musielak--Orlicz space nor the arguments of the proofs. In the discussion below, however, for the sake of the exposition completeness, we discuss both the behavior close to infinity as well as close to the origin.  
      
  \begin{remark}
  	It is easy to verify that the complementary function of a function which satisfies (N1)--(N4) satisfies (N1)--(N3). It does not have to satisfy either the first, or  the second assertion of (N4). Indeed, let $\Omega=(0,1)$ and let 
  	$$M(x,\xi) = \begin{cases}
  	x\frac{|\xi|^2}{2}\ \textrm{when}\ |\xi|\leq 1,\\
  	\frac{|\xi|^2}{2}-\frac{1}{2}+\frac{x}{2}\ \textrm{otherwise}.
  	\end{cases}\quad \textrm{Then}\quad \widetilde{M}(x,\eta)=\begin{cases}\frac{|\eta|^2}{2x}\ \textrm{when}\ |\eta|\leq x,\\
  	|\eta|-\frac{x}{2}\ \textrm{when}\ |\eta|\in (x,1),\\
  	\frac{|\eta|^2}{2}-\frac{x}{2}+\frac{1}{2}\ \textrm{when}\ |\eta|>1.
  	\end{cases}.
  	$$ It is clear that
  	$$
  	\lim_{|\xi|\to 0}\esssup_{x\in \Omega}\frac{\widetilde{M}(x,\eta)}{|\eta|} = \frac{1}{2}.
  	$$ 
  	Moreover,
  	$$
  	\textrm{if}\ \  
  	M(x,\xi) = \begin{cases}
  	\frac{|\xi|^2}{2}\quad \textrm{when}\quad |\xi|\leq \sqrt{2},\\
  	\frac{|\xi|^2}{2x} + 1 - \frac{1}{x}\quad \textrm{otherwise},
  	\end{cases}\ \  \textrm{then}\ \  
  	\widetilde{M}(x,\eta) = \begin{cases}
  	\frac{|\eta|^2}{2}\quad \textrm{when}\quad |\eta|\leq \sqrt{2},\\
  	\sqrt{2}|\eta| - 1\quad \textrm{when}\quad \eta\in(\sqrt{2},\sqrt{2}/x),\\
  	\frac{|\eta|^2x}{2} - 1 + \frac{1}{x}\quad \textrm{otherwise}.
  	\end{cases}
  	$$
  	It is not hard to verify that 
  	$$
  	\lim_{|\eta|\to \infty}\essinf_{x\in \Omega}\frac{\widetilde{M}(x,\eta)}{|\eta|} = \sqrt{2}.
  	$$
  	Note that the two examples do not satisfy (N5). As we will later show, the complementary function of an $N$-function is also an $N$-function.
  \end{remark}
  
  In the following Lemma \ref{lem:nfunction} and Remark \ref{rem:N} we establish that $N$-functions always have a minorant and majorant being one dimensional homogeneous $N$-functions. 
   
  \begin{lemma}\label{lem:nfunction}
  	Let $M:\Omega\times\mathbb{R}^d \to \mathbb{R}$ be a function satisfying (N1)--(N4).
  	Then $M:\Omega\times\mathbb{R}^d \to \mathbb{R}$ is an $N$-function (i.e. it satisfies (N5)) if and only if it is \textit{stable}, i.e., if there exist homogeneous one dimensional $N$-functions $m_1, m_2:[0,\infty)\to [0,\infty)$ such that for every $\xi\in \R^d$ and almost every $x\in \Omega$ there holds
  	$$
  	m_1(|\xi|) \leq M(x,\xi) \leq m_2(|\xi|).
  	$$ 
  	In particular every $N$-function is stable.
  \end{lemma}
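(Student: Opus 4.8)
The plan is to prove the equivalence by establishing each implication separately; the closing sentence ``in particular every $N$-function is stable'' is then just the forward implication. The backward direction is immediate: if $m_1(|\xi|)\le M(x,\xi)\le m_2(|\xi|)$ for a.e.\ $x\in\Omega$ and every $\xi\in\R^d$ with homogeneous one dimensional $N$-functions $m_1,m_2$, then for each $s>0$ one has $\essinf_{x\in\Omega}\inf_{|\xi|=s}M(x,\xi)\ge m_1(s)>0$, and $\esssup_{x\in\Omega}M(x,\xi)\le m_2(|\xi|)<\infty$ for $\xi\neq0$, which is exactly (N5). So from now on I assume (N1)--(N5) and construct $m_1,m_2$.

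For the forward direction I would introduce the radial envelopes
\[\underline{M}(s)=\essinf_{x\in\Omega}\inf_{|\xi|=s}M(x,\xi),\qquad \overline{M}(s)=\esssup_{x\in\Omega}\sup_{|\xi|\le s}M(x,\xi)\quad(s>0),\]
with $\underline M(0)=\overline M(0)=0$; these are well defined because by (N1) the inner infimum and supremum may be taken over a countable dense subset of the sphere, so $x\mapsto\inf_{|\xi|=s}M(x,\xi)$ and $x\mapsto\sup_{|\xi|\le s}M(x,\xi)$ are measurable. I would then check the following, in order. First, $\underline M(s)>0$ for $s>0$, directly from the first part of (N5). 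Second, $\overline M(s)<\infty$ for all $s$: the ball $\{|\xi|\le s\}$ lies in the cross-polytope $\operatorname{conv}\{\pm s\sqrt d\,e_i\}$, so convexity (N3) together with evenness (N2) gives $\sup_{|\xi|\le s}M(x,\xi)\le\max_{1\le i\le d}M(x,s\sqrt d\,e_i)$, whose $\esssup$ over $x$ is finite by the second part of (N5). Third, both $s\mapsto\underline M(s)/s$ and $s\mapsto\overline M(s)/s$ are nondecreasing: for every unit vector $v$ the slope $t\mapsto M(x,tv)/t$ is nondecreasing by convexity and $M(x,0)=0$, and taking infima/suprema over $v$ and then $\essinf$/$\esssup$ over $x$ preserves monotonicity in $s$ (here one also uses $\sup_{|\xi|\le s}M(x,\xi)=\sup_{|\xi|=s}M(x,\xi)$, valid since $M(x,\cdot)$ is convex and nonnegative). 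Fourth, $\overline M(s)/s\to0$ as $s\to0^+$, again from the cross-polytope bound and the first limit in (N4); consequently $\underline M(s)/s\le\overline M(s)/s\to0$ as well.

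The crucial step is to show $\underline M(s)/s\to\infty$ as $s\to\infty$. Given $K>0$, the second limit in (N4) provides $R_K>0$ with $\essinf_{x}M(x,\xi)\ge K|\xi|$ for all $|\xi|\ge R_K$; since $M(x,\cdot)$ is continuous, for a.e.\ $x$ the inequality $\inf_{|\xi|=R_K}M(x,\xi)<KR_K$ forces $M(x,R_K q)<KR_K$ for some $q$ in a fixed countable dense set of unit vectors, so the exceptional set is a countable union of null sets and $\underline M(R_K)\ge KR_K$; the monotonicity of $\underline M(s)/s$ then yields $\underline M(s)/s\ge K$ for all $s\ge R_K$. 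Finally I would put
\[m_1(s)=2\int_0^{s/2}\frac{\underline M(u)}{u}\,du,\qquad m_2(s)=\int_0^{2s}\frac{\overline M(u)}{u}\,du,\]
which is legitimate since the integrands are nondecreasing, hence measurable and locally bounded; then $m_1,m_2$ are convex with $m_i(0)=0$ and $m_i(s)>0$ for $s>0$, the monotonicity of $\underline M(\cdot)/\cdot$ and $\overline M(\cdot)/\cdot$ gives $m_1(s)\le\underline M(s)$ and $m_2(s)\ge\overline M(s)$, and the limits obtained above give $m_i(s)/s\to0$ as $s\to0^+$ and $m_i(s)/s\to\infty$ as $s\to\infty$; thus each $m_i$ is a homogeneous one dimensional $N$-function. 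Since $\underline M(|\xi|)\le M(x,\xi)\le\overline M(|\xi|)$ for a.e.\ $x$ and every $\xi$, we conclude $m_1(|\xi|)\le M(x,\xi)\le m_2(|\xi|)$, the asserted stability.

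I expect the step $\underline M(s)/s\to\infty$ to be the main obstacle. Without convexity (N3) the estimate on a single sphere of radius $R_K$ could not be propagated outward to all larger radii, and without the continuity in (N1) the bad set over all directions would be an uncountable union of null sets rather than a countable one; it is precisely the interplay of convexity, continuity, and the uniformity over the sphere built into the limit in (N4) that produces the $x$-uniform superlinear lower bound. The finiteness of $\overline M$ and the vanishing of $\overline M(s)/s$ near the origin are the only other points needing a genuine argument (the cross-polytope trick); everything else is bookkeeping.
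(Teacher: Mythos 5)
Your proof is correct, and it takes a genuinely different route from the paper. The paper defines $m_2(s)=\esssup_x\sup_{|\xi|=s}M(x,\xi)$ and asserts it is an $N$-function; for $m_1$ it takes the lower semicontinuous envelope $m_{lsc}$ of $m_{inf}(s)=\essinf_x\inf_{|\xi|=s}M(x,\xi)$ and then its Fenchel biconjugate, verifying superlinear growth at infinity by contradiction via a Griewank-type representation of points on the graph of the convex envelope. You instead observe that convexity plus $M(x,0)=0$ makes the radial slope $M(x,tv)/t$ nondecreasing along every ray, and that this slope monotonicity survives $\inf_v$, $\sup_v$, $\essinf_x$, $\esssup_x$; this single fact propagates the lower bound at a single radius $R_K$ to all $s\ge R_K$, and then $m_1,m_2$ are built by a uniform, explicit integral of the (shifted) slope, bypassing biconjugates and the Griewank machinery entirely.

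A further virtue of your version is that it explicitly fills a subtlety the paper glosses over. To conclude $m_{inf}(r)/r\to\infty$ from (N4) one cannot simply cite ``superlinear growth at infinity of $M$'': (N4) controls $\essinf_x M(x,\xi)$ for each \emph{fixed} $\xi$, while $m_{inf}(r)=\essinf_x\inf_{|\xi|=r}M(x,\xi)$ lets the infimizing direction depend on $x$, and the union over all directions of the (N4)-exceptional null sets could a priori be fat. Your countable-dense-set-of-directions argument combined with continuity of $M(x,\cdot)$ from (N1) is exactly what is needed to close this, and the slope monotonicity then renders the contradiction argument unnecessary. The only place you do slightly more work than required is $m_2$: the paper's choice $\overline M$ is already convex (as an essential supremum of suprema of convex functions of $s$), so the smoothing integral is not needed there; but your construction is harmless and has the advantage of being symmetric with the one for $m_1$.
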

  \begin{proof}
  	The fact that stability implies (N5) is straightforward. For the opposite implication define $m_2:[0,\infty) \to [0,\infty)$ by the formula
  	$$
  	m_2(s) = \esssup_{x\in \Omega} \sup_{|\xi|=s} M(x,\xi).
  	$$
  	It is straightforward to check that this function is finite, nonzero for $\xi\neq 0$, and satisfies (N1)--(N4).
  	To get the lower bound, let us define
  	$$m_{inf}(s) = \essinf_{x\in \Omega} \inf_{|\xi|=s} M(x,\xi).$$ 
  	The function $m_{inf}$ is nondecreasing and hence it has at most countable number of discontinuities and each of these discontinuities is a jump point. Define
  	$$
  	m_{lsc}(s) = \begin{cases}
  	m_{inf}(s)\ \mathrm{if}\ m_{inf}\ \textrm{is continuous at}\  s,\\
  	m_{inf}(s^-)\ \mathrm{otherwise}.
  	\end{cases}
  	$$
  	This function is in fact the lower semicontinuous envelope of $m_{inf}$. Now define $m_1:[0,\infty) \to [0,\infty)$ as 
  	$$
  	m_1(s) = \widetilde{\widetilde{m_{lsc}}}(s).
  	$$
  	that is, the greatest convex minorant of $m_{lsc}$. 
  	The fact that $m_1$ satisfies (N1)--(N3), (N5), as well as the growth at zero in (N4) is clear. To prove the growth at infinity assume, for contradiction, that there exist  constants $c< \infty$, $R > 0$ and $\epsilon>0$ such that for every $s\geq R$ 
  	$$
  	m_1(s) < cs +\epsilon.
  	$$
  	So, for a sequence $s_n\to \infty$, there exist numbers $s_n^1,s_n^2$ and $\lambda_n^1, \lambda_n^2\geq 0$ such that $\lambda_n^1 + \lambda_n^2 = 1$ and
  	$$
  	(s_n,m_1(s_n)) = \left(\lambda_n^1 s_n^1 + \lambda_n^1 s_n^2, \lambda_n^1 m_{lsc}(s_n^1) + \lambda_n^1 m_{lsc}(s_n^2)\right). 
  	$$
  	Moreover 
  	$$
  	s_n^1 \geq s_n, \quad m_{lsc}(s_n^1) = m_1(s_n^1)\quad \textrm{and}\quad \lambda_n^1 > 0.
  	$$
  	Such a choice of $s_n^1, \lambda_n^1, s_n^2, \lambda_n^2$ is possible due to \cite[Theorem 2.1, Remark 2.1]{Griewank}.  
  	This means that $s_n^1\to \infty$ as $n\to\infty$ and $m_{lsc}(s_n^1) = m_1(s_n^1) < cs_n^1 +\epsilon$, whence
  	$$
  	\frac{m_{lsc}(s_n^1)}{s_n^1} \leq c + \frac{\epsilon}{s_n^1}.
  	$$
  	Thus we can construct a sequence $r_n \to \infty$ such that 
  	$$
  	\limsup_{n\to\infty} \frac{m_{inf}(r_n)}{r_n}\leq c.
  	$$
  	This	is a contradiction with superlinear growth at infinity of $M$. 	 
  \end{proof}

  \begin{remark}\label{rem:N}
  	We are tempted to replace the lower bound in (N5) with its weaker version 
  	$$
  	\essinf_{x\in \Omega}M(x,\xi) > 0\ \textrm{for every}\ \xi\neq 0.
  	$$
  	Unfortunately in such case Lemma \ref{lem:nfunction} does not hold anymore. To demonstrate this consider $\Omega=(0,1)$, $d=2$ and let $\theta(\xi)$ denote the angular polar coordinate of $\xi$. Define the function
  	$$
  	P(x,\xi) = \begin{cases}
  	|\xi|^2\ \ \textrm{if}\ \ |\xi|\neq 1,\\
  	1\ \ \textrm{if}\ \ |\xi|= 1\ \textrm{and}\ \theta(\xi)\notin (0,x)\cup(\pi,\pi+x),\\
  	x+(1-x)\cos^2\left(\frac{\theta(\xi)}{x}\pi\right)\ \textrm{if}\ \ |\xi|= 1\ \textrm{and} \ \theta(\xi)\in (0,x),\\
  	x+(1-x)\cos^2\left(\frac{\theta(\xi)-\pi}{x}\pi\right)\ \textrm{if}\ \ |\xi|= 1\ \textrm{and} \ \theta(\xi)\in (\pi,\pi+x).
  	\end{cases}
  	$$
  	Note that $P$ is lower semicontinuous with respect to $\xi$. Define $M(x,\xi) = \widetilde{\widetilde{P}}(x,\xi)$, the convex envelope of $P$. Then if only $|\xi|=1$ and $\theta(\xi)\in (0,1)\cup(\pi,\pi+1)$ we obtain 
  	$$
  	\essinf_{x\in \Omega} M(x,\xi) \leq 		\essinf_{x\in \Omega} P(x,\xi) = \frac{\theta(\xi)}{2}.
  	$$ 
  	This means that 
  	$$
  	\inf_{|\xi|=1}\essinf_{x\in \Omega} M(x,\xi) = 0,
  	$$
  	and hence we can have only $m_1(1) = 0$. Lower semicontinuity of $P$ with respect to $\xi$ and \cite[Theorem 2.1, Remark 2.1]{Griewank} imply that $\essinf_{x\in \Omega} M(x,\xi) > 0$ for every nonzero $\xi$. We will verify that $M$ satisfies all remaining conditions in the definition on an $N$-function. Indeed, (N1)-(N3), growth at zero in (N4), and condition with $\esssup$ in (N5) are clear. We will verify growth at infinity in (N4). Assume for contradiction that there exists a sequence $\xi_n$ with $|\xi_n|\to \infty$ and a constant $C>0$ such that 
  	$$
  	\essinf_{x\in \Omega}\frac{M(x,\xi_n)}{|\xi_n|} \leq C.
  	$$
  	This means that there exists $x_n \in (0,1)$ such that 
  	$$
  	\frac{M(x_n,\xi_n)}{|\xi_n|} \leq C+1.
  	$$ Now \cite[Theorem 2.1]{Griewank} implies that there exist nonnegative numbers $\lambda^1_n+\lambda^2_n+\lambda^3_n = 1$ and $\xi^1_n,\xi^2_n,\xi^3_n\in \R^2$ such that 
  	$\xi_n = \lambda^1_n\xi^1_n + \lambda^2_n\xi^2_n+\lambda^3_n\xi^3_n$ and 
  	$$
  	M(x_n,\xi_n) = \lambda^1_nP(x_n,\xi^1_n)+\lambda^2_nP(x_n,\xi^2_n)+\lambda^3_nP(x_n,\xi^3_n).
  	$$
  	We can assume that $|\xi^1_n|\geq |\xi_n|$ and $\lambda^1_n > 0$. If, for a given $n$, neither of $|\xi^k_n|$ is on a unit circle then
  	$$
  	|\xi_n|^2\leq \lambda_n^1|\xi_n^1|^2+		\lambda_n^2|\xi_n^2|^2+		\lambda_n^3|\xi_n^3|^2 = M(x_n,\xi_n) \leq P(x_n,\xi_n)= |\xi_n|^2.
  	$$
  	It follows that 
  	$$
  	\frac{M(x_n,\xi_n)}{|\xi_n|} = |\xi_n|\leq C+1.
  	$$
  	If exactly one of $\xi_n^k$ is on the unit circle, say $\xi^3_n$, then
  	\begin{align*}
  	&|\xi_n|^2\leq \lambda_n^1|\xi_n^1|^2+		\lambda_n^2|\xi_n^2|^2+		\lambda_n^3|\xi_n^3|^2 = \lambda_n^1|\xi_n^1|^2+		\lambda_n^2|\xi_n^2|^2+		\lambda_n^3 + \lambda_n^3P(x_n,\xi_n^3) - \lambda^3_nP(x_n,\xi_n^3)\\
  	& \ \  =  M(x_n,\xi_n) + \lambda_n^3(1-P(x_n,\xi_n^3))\leq  M(x_n,\xi_n) + 1 \leq (C+1)|\xi_n| + 1.
  	\end{align*}
  	It must be 
  	$$
  	|\xi_n| \leq (C+1) + \sqrt{2}.
  	$$
  	In the final possibility two points $\xi_n^2$ and $\xi_n^3$ are on the unit circle. Then 
  	\begin{align*}
  	&|\xi_n|^2\leq \lambda_n^1|\xi_n^1|^2+		\lambda_n^2|\xi_n^2|^2+		\lambda_n^3|\xi_n^3|^2\\
  	& \ \  = \lambda_n^1|\xi_n^1|^2+		\lambda_n^2+\lambda_n^2P(x_n,\xi_n^2) - \lambda^2_nP(x_n,\xi_n^2)	+	\lambda_n^3 + \lambda_n^3P(x_n,\xi_n^3) - \lambda^3_nP(x_n,\xi_n^3)\\
  	& \ \  =  M(x_n,\xi_n) + \lambda_n^2(1-P(x_n,\xi_n^2)) + \lambda_n^3(1-P(x_n,\xi_n^3))\leq  M(x_n,\xi_n) + 2 \leq (C+1)|\xi_n| + 2.
  	\end{align*}
  	It follows that $|\xi_n|$ is bounded, and the proof by contradiction is complete. 
  \end{remark}
  
  \begin{remark}
  	Sometimes the assertion (N4) in the definition of an N-function is replaced with its weaker, nonuniform version
  	\begin{itemize}
  		\item[(N4')] \ \ \ $	\lim_{|\xi|\to 0}\frac{M(x,\xi)}{|\xi|} = 0\qquad \textrm{and}\qquad \lim_{|\xi|\to \infty}\frac{M(x,\xi)}{|\xi|} = \infty \quad \textrm{for a.e.}\quad x\in \Omega$. 
  	\end{itemize}
  	It is clear that (N4) implies (N4'). We demonstrate that if we replace (N4) with (N4') in the definition of an $N$-function, then we lose the stability property. Indeed, consider  $\Omega = (0,1)$, $d=1$ and 
  	$$
  	M(x,\xi) = \begin{cases}
  	\frac{|\xi|^2}{x+|\xi|}\quad \textrm{for} \quad |\xi|\leq 1,\\
  	\frac{|\xi|^3}{x+|\xi|}\quad \textrm{otherwise}.
  	\end{cases}
  	$$
  	This function satisfies (N1)--(N3), as well as (N4') and (N5), but it is not stable, as 
  	$$
  	m_2(s) = \begin{cases}
  	s\quad \textrm{for} \quad |\xi|\leq 1,\\
  	s^2\quad \textrm{otherwise},
  	\end{cases}
  	$$
  	which is not an $N$-function as it has linear growth at zero.  Moreover, consider $\Omega = (0,1)$, $d=1$ and 
  	$$
  	M(x,\xi) = \begin{cases}
  	\left(\frac{1}{2}+x\right)|\xi|^2\quad \textrm{for} \quad |\xi|\leq 1,\\
  	|\xi|+x|\xi|^2\quad \textrm{otherwise}.
  	\end{cases}
  	$$
  	This function satisfies (N1)--(N3), (N4'), and (N5) but it does not satisfy (N4) and it is not stable, as
  	$$
  	m_1(s) = \begin{cases}
  	\frac{1}{2}s^2\quad \textrm{for} \quad |\xi|\leq 1,\\
  	s\quad \textrm{otherwise},
  	\end{cases}
  	$$
  	and it has linear, and not superlinear growth at infinity.
  	
  \end{remark}
  
  \begin{remark}\label{rem:homo_minorant}
  	We will make use of the function $M_1:\R^d\to \R$ defined as 
  	$$
  	M_1(\xi) = \widetilde{\widetilde {\essinf_{x\in\Omega} M(x,\xi)}},
  	$$
  	the greatest convex minorant of $\essinf_{x\in\Omega} M(x,\xi)$. 
  	It is not hard to verify that if $M$ is an $N$-function, then  $M_1$ is a homogeneous $N$-function and for every $\xi\in \R^d$ and for almost every $x\in \Omega$ there holds 
  	$$
  	m_1(|\xi|) \leq M_1(\xi) \leq M(x,\xi).
  	$$
  \end{remark}

  The following result shows the integrability of an $N$-function
  \begin{lemma}
  	Every $N$-function is integrable, i.e. for every $\xi\in \R^d$ there holds
  	$$
  	\int_{\Omega}M(x,\xi)\, dx < \infty.
  	$$
  \end{lemma}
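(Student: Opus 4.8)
The plan is to reduce the claim to the boundedness of $\Omega$ together with the pointwise control on $M$ that the $N$-function axioms provide. First I would fix $\xi\in\R^d$ and observe that by (N1) the map $x\mapsto M(x,\xi)$ is measurable, so the integral in the statement is well defined as an element of $[0,\infty]$; it remains only to show it is finite.

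For $\xi=0$ there is nothing to do: (N2) gives $M(x,0)=0$ for a.e.\ $x\in\Omega$, so the integral vanishes. For $\xi\neq0$ I would use (N5): its second clause yields $C_\xi:=\esssup_{x\in\Omega}M(x,\xi)<\infty$, and since $M(x,\xi)\le C_\xi$ for a.e.\ $x\in\Omega$ while $|\Omega|<\infty$ (recall $\Omega$ is open and bounded), we get
$$
\int_{\Omega}M(x,\xi)\,dx\le C_\xi\,|\Omega|<\infty.
$$
Alternatively, invoking the stability established in Lemma \ref{lem:nfunction}, there is a homogeneous one-dimensional $N$-function $m_2$ with $M(x,\xi)\le m_2(|\xi|)$ for a.e.\ $x$, and the same computation gives $\int_{\Omega}M(x,\xi)\,dx\le m_2(|\xi|)\,|\Omega|<\infty$.

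There is no real obstacle here: the only (mild) points to keep in mind are checking that $M(\cdot,\xi)$ is measurable, so that the integral makes sense, and recalling that the boundedness of $\Omega$ forces $|\Omega|<\infty$; both are immediate from the standing hypotheses.
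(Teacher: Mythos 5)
Your proof is correct and essentially matches the paper's: the paper's one-line argument "follows from stability of $M$ and boundedness of $\Omega$" is precisely your second alternative, and your primary route via (N5) directly is just unpacking the upper-bound half of the stability lemma, since $m_2(s)=\esssup_{x\in\Omega}\sup_{|\xi|=s}M(x,\xi)$. The extra remarks on measurability and the $\xi=0$ case are harmless but not needed.
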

  \begin{proof}
  	The result readily follows from stability of $M$ and the fact that $\Omega$ is bounded.
  \end{proof}
  
  We continue by reminding some properties of $N$-functions. 
  
  \begin{lemma} If $L, M$ are $N$-functions, then the following assertions hold
  	\begin{itemize}
  		\item[1.] For almost every $x\in \Omega$ and for every $\xi,\eta \in \R^d$ there holds the following Fenchel--Young inequality
  		$$
  		|\xi\cdot \eta| \leq M(x,\xi) + \widetilde{M}(x,\eta).
  		$$
  		\item[2.] If, for some $x\in \Omega$, there holds $L(x,\xi)\leq M(x,\xi)$ for all $\xi\in \R^d$ then, for this $x$,  $\widetilde{M}(x,\xi)\leq \widetilde{L}(x,\xi)$ for every $\xi\in \R^d$.
  		\item[3.] There holds 
  		$$
  		\lim_{|\xi|\to \infty} \essinf_{x\in \Omega}\frac{\widetilde{M}(x,\xi)}{|\xi|} = \infty \quad \textrm{and}\quad \lim_{|\xi|\to 0}\esssup_{x\in \Omega}\frac{\widetilde{M}(x,\xi)}{|\xi|}=0.
  		$$
  		\item[4.] There holds
  		$$\essinf_{x\in \Omega} \inf_{|\xi|=s} \widetilde{M}(x,\xi) \geq \widetilde{m_2}(s) > 0\quad \textrm{for every nonzero}\quad \xi \in \R^d.$$ 
  		\item[5.] There holds
  		$$\esssup_{x\in \Omega} \widetilde{M}(x,\xi) \leq \widetilde{m_1}(|\xi|) < \infty\quad \textrm{for every}\quad \xi \in \R^d.$$ 
  	\end{itemize}
  	It follows that the complementary function of an $N$-function is also an $N$-function.  
  \end{lemma}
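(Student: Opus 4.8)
The plan is to dispatch items~1 and~2 directly from the definition \eqref{fenchel} of the Fenchel transform, then to reduce items~3--5 to the one-dimensional theory via the stability of $M$, and finally to assemble these ingredients into a verification of (N1)--(N5) for $\widetilde{M}$.

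For item~1 I would simply note that for any $\xi\in\R^d$ the definition gives $\widetilde{M}(x,\eta)\geq \xi\cdot\eta - M(x,\xi)$ and, replacing $\xi$ by $-\xi$ and using the evenness (N2) of $M$, also $\widetilde{M}(x,\eta)\geq -\xi\cdot\eta - M(x,\xi)$; taking the larger of the two yields the Fenchel--Young inequality. Item~2 is immediate as well: if $L(x,\cdot)\leq M(x,\cdot)$ then $\xi\cdot\eta - M(x,\xi)\leq \xi\cdot\eta - L(x,\xi)$ for every $\xi$, and passing to the supremum over $\xi$ reverses this into $\widetilde{M}(x,\eta)\leq\widetilde{L}(x,\eta)$.

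The core of the argument is the following reduction. By Lemma~\ref{lem:nfunction} there are homogeneous one-dimensional $N$-functions $m_1,m_2$ with $m_1(|\xi|)\leq M(x,\xi)\leq m_2(|\xi|)$ for a.e.\ $x$ and every $\xi$. A short computation shows that the Fenchel conjugate of a radial function $\xi\mapsto\phi(|\xi|)$ is again radial, namely $\eta\mapsto\widetilde{\phi}(|\eta|)$, since the supremum over $\xi$ can be taken first over the sphere $\{|\xi|=r\}$, where $\xi\cdot\eta$ equals $r|\eta|$ at the maximizer, and then over $r\geq 0$. Combining this with item~2 gives
\begin{equation*}
\widetilde{m_2}(|\eta|)\leq \widetilde{M}(x,\eta)\leq \widetilde{m_1}(|\eta|)\qquad\text{for a.e.\ }x\in\Omega\text{ and every }\eta\in\R^d.
\end{equation*}
Now I would invoke the classical one-dimensional duality: the conjugate of a one-dimensional $N$-function is again a one-dimensional $N$-function. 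If one prefers a self-contained argument, this follows from the derivative characterization --- superlinearity of $m$ at $0$ and at $\infty$ amounts to $m'(0^+)=0$ and $m'(t)\to\infty$, and the right-derivative of $\widetilde{m}$ is the generalized inverse of $m'$, so $\widetilde{m}'(0^+)=0$, $\widetilde{m}'(s)\to\infty$, and $\widetilde{m}<\infty$ everywhere with $\widetilde{m}(s)>0$ for $s>0$, using also that $m(t)>0$ for $t>0$. With $\widetilde{m_1},\widetilde{m_2}$ being one-dimensional $N$-functions, items~4 and~5 are read off the displayed inequality directly, and item~3 follows by dividing the inequality by $|\eta|$ and letting $|\eta|\to 0$ and $|\eta|\to\infty$, the squeezing bounds being uniform in $x$.

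Finally, for the concluding assertion I would verify (N1)--(N5) for $\widetilde{M}$. Convexity (N3) holds since $\widetilde{M}(x,\cdot)$ is a supremum of affine functions; the same observation shows $\widetilde{M}(x,\cdot)$ is convex and finite everywhere (by item~5), hence continuous, while measurability of $x\mapsto\widetilde{M}(x,\eta)$ follows by restricting the supremum in \eqref{fenchel} to $\xi\in\mathbb{Q}^d$, which is legitimate since $M(x,\cdot)$ is continuous for a.e.\ $x$; this gives (N1). Evenness in (N2) is inherited from that of $M$ via the substitution $\xi\mapsto-\xi$ in the supremum, and $\widetilde{M}(x,\eta)=0$ iff $\eta=0$ follows from $\widetilde{M}(x,0)=-\inf_{\xi}M(x,\xi)=-M(x,0)=0$ together with item~4. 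Condition (N4) is precisely item~3, and (N5) combines the lower bound from item~4 with the finiteness from item~5. I expect the only genuinely non-routine ingredient to be the one-dimensional duality underlying item~3; everything else is bookkeeping around the stability squeeze provided by Lemma~\ref{lem:nfunction}.
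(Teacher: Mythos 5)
Your proof is correct, and its organization of items~3--5 is genuinely different from the paper's. You first establish the pointwise squeeze $\widetilde{m_2}(|\eta|) \leq \widetilde{M}(x,\eta) \leq \widetilde{m_1}(|\eta|)$ by combining item~2 with the (correct) observation that the $\R^d$-Fenchel conjugate of a radial function $\xi\mapsto\phi(|\xi|)$ is $\eta\mapsto\widetilde\phi(|\eta|)$ with $\widetilde\phi$ the one-dimensional conjugate, and then you import the classical one-dimensional fact that the conjugate of a one-dimensional $N$-function is again a one-dimensional $N$-function; items~3, 4, 5 are then read off the squeeze at once. The paper proceeds more piecemeal: it proves the first limit in item~3 by a direct Fenchel--Young estimate with the test vector $\xi=K\eta/|\eta|$, and it establishes the three one-dimensional facts it needs --- $\widetilde{m_1}(s)/s\to 0$ as $s\to 0^+$, $\widetilde{m_2}(s)>0$ for $s\neq 0$, and $\widetilde{m_1}(s)<\infty$ --- each by a short ad hoc Fenchel--Young or contradiction argument, without ever stating that $\widetilde{m_1},\widetilde{m_2}$ are themselves $N$-functions. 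Your route is more modular and makes the logical dependence on the one-dimensional duality explicit; the paper's route is more self-contained in that it re-derives exactly the pieces of that duality it uses and nothing more. One cosmetic point: the paper's proof leaves the verification of (N1)--(N3) for $\widetilde M$ to the reader (it remarks before the lemma that these are "standard"), whereas you spell it out --- including the useful remark that restricting the supremum in \eqref{fenchel} to $\mathbb{Q}^d$ gives measurability of $x\mapsto\widetilde M(x,\eta)$ --- which makes your write-up a bit more complete on that front.
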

  \begin{proof}
  	The assertions 1. and 2. of the above lemma are standard properties of the Fenchel conjugate valid for functions which satisfy (N1)--(N3) even without (N4) and (N5). To prove 3. take $\xi = K \eta / |\eta|$ in the Fenchel--Young inequality which yields
  	$$
  	K |\eta| \leq \widetilde{M}(x,\eta) + M(x, K\eta / |\eta|) \leq \widetilde{M}(x,\eta) + m_2(K).
  	$$  
  	Dividing by $|\eta|$ and taking $\essinf_{x\in \Omega}$ we obtain
  	$$
  	\essinf_{x\in \Omega}\frac{\widetilde{M}(x,\eta)}{|\eta|} \geq K - \frac{m_2(K)}{|\eta|},
  	$$
  	and the assertion follows by passing with $|\eta|$ to $\infty$. To prove the second assertion of 3. observe that
  	$$
  	\esssup_{x\in \Omega}\frac{\widetilde{M}(x,\xi)}{|\xi|} \leq \frac{\widetilde{m_1}(|\xi|)}{|\xi|}.
  	$$
  	For any $s\in (0,\infty)$ we can find $t=t(s)\in (0,\infty)$ 
  	such that $\widetilde{m_1}(s) + m_1(t) = st$. Such $t$ always exists as the equality in the Fenchel--Young inequality is equivalent to the fact that $t\in \partial m_1(s)$ (i.e. $t$ belongs to the convex subdifferential of $m_1$ at $s$), and for convex functions leading from $\R$ to $\R$ (which have to be continuous) the subdifferential is always nonempty. Let $s_n\to 0^+$. The corresponding sequence $t(s_n)$ must be bounded. Indeed, if this is not the case, then, for a subsequence
  	$$
  	\frac{m_1(t(s_n))}{t(s_n)} \leq \frac{m_1(t(s_n)) + \widetilde{m_1}(s_n)}{t(s_n)} = s_n,
  	$$  
  	a contradiction. So, for a subsequence, $t(s_n) \to t_0$. Passing to the limit with $n$ to $\infty$ in the expression $\widetilde{m_1}(s_n) + m_1(t(s_n)) = s_n t(s_n)$ we deduce that $t_0 = 0$ and the whole sequence converges. Now
  	$$
  	\frac{\widetilde{m_1}(s_n)}{s_n} \leq \frac{\widetilde{m_1}(s_n)+m_1(t(s_n))}{s_n} =   t(s_n) \to 0\quad \textrm{as} \quad n\to \infty.
  	$$
  	The assertion 3. is proved. The first inequality in 4. follows from 2. The fact that $\widetilde{m_2}(s)\neq 0$ for $s\neq 0$ follows from the fact that $m_2$ is an $N$-function. Indeed, assume the contrary, that is 
  	$
  	\widetilde{m_2}(s) = 0
  	$ for some nonzero $s\in \R$. 
  	Taking $t = s/K$ in the Fenchel--Young inequality we obtain
  	$$
  	m_2(s/K) + \widetilde{m_2}(s) \geq s^2 / K.
  	$$ 
  	It follows that
  	$$
  	\frac{m_2(s/K)}{s/K} \geq s,
  	$$
  	and we have the contradiction by passing with $K$ to infinity, as $m_2$, an $N$-function, in particular satisfies the assertion (N4) of Definition \ref{def:N}. Let us prove 5. To this end assume, for contradiction, that there exists $s\in \R$ and the sequence $\{ t_n \}\subset \R$ such that $t_n s - m_1(t_n) \geq n$. It is clear that $t_n \to \infty$. Then we obtain 
  	$$
  	s\geq \frac{m_1(t_n)}{t_n} + \frac{n}{t_n} \geq \frac{m_1(t_n)}{t_n},
  	$$   
  	and we get the contradiction with superlinear growth of $m_1$ at infinity by passing with $n$ to infinity.
  \end{proof}

  Sometimes we will use the so called $\Delta_2$ condition which states that there exists a constant $c>0$ and a nonnegative function $h\in L^1(\Omega)$ such that for a.e. $x\in \Omega$ and every $\xi\in \R^d$ 
  \begin{equation}\label{delta2}
  M(x,2\xi) \leq c M(x,\xi) + h(x).
  \end{equation}
  If $M$ satisfies the above condition for $\xi\in \R^d \setminus B(0,R)$ for some $R>0$ than we write that $M \in \Delta_2^\infty$ and we say that $M$ satisfies $\Delta_2$ far from origin.

  \medskip
  
  \noindent \textbf{Orlicz--Musielak spaces.} We remind the definition of the Orlicz--Musielak class $\mathcal{L}_M(\Omega)$ and two spaces $L_M(\Omega)$, and $E_M(\Omega)$. 
  \begin{definition}\label{def:Musielak}
  	Suppose that $M$ is an $N$-function. 
  	\begin{itemize}
  		\item[1.] $\mathcal{L}_M(\Omega)$, the Orlicz--Musielak class, is the set of all measurable functions $\xi:\Omega\to \R^d$ such that
  		$$
  		\int_{\Omega} M(x,\xi(x))\, dx < \infty.
  		$$
  		\item[2.] $L_M(\Omega)$ is the generalized Orlicz--Musielak space, which is the smallest linear space containing $\mathcal{L}_M(\Omega)$, equipped with the Luxemburg norm
  		$$
  		\|\xi\|_{L_M} = \inf\left\{\lambda > 0\, :\ \int_{\Omega} M\left(x,\frac{\xi(x)}{\lambda}\right) \, dx \leq 1 \right\}.
  		$$
  		\item[3.] $E_M(\Omega)$ is the closure of $L^\infty(\Omega)^d$ in $L_M$ norm.
  \end{itemize}\end{definition}
  We prove the following result
  \begin{theorem}\label{thm:density}
  	The space $C^\infty_0(\Omega)^d$ is dense in $E_M(\Omega)$ in $L_M$ norm.
  \end{theorem}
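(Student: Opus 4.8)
The plan is to use the definition of $E_M(\Omega)$ as the $L_M$-closure of $L^\infty(\Omega)^d$ together with a mollification argument; the only delicate point is that, since we do not assume $\Delta_2$ for $M$, modular convergence does not by itself imply norm convergence, so I will work directly with the Luxemburg norm and exploit the uniform $L^\infty$ bounds on all approximants.

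First I would record the elementary fact that if $\{h_n\}\subset L^\infty(\Omega)^d$ and $h\in L^\infty(\Omega)^d$ satisfy $|h_n|,|h|\le C$ a.e.\ in $\Omega$ and $h_n\to h$ a.e., then $\|h_n-h\|_{L_M}\to 0$. Indeed, fix $\lambda>0$; by stability of $M$ (Lemma \ref{lem:nfunction}) and boundedness of $\Omega$ the functions $M\big(x,(h_n-h)/\lambda\big)$ are dominated by the integrable constant $m_2(2C/\lambda)$, while the integrand tends to $0$ a.e.\ in $\Omega$; hence the Lebesgue dominated convergence theorem gives $\int_\Omega M\big(x,(h_n-h)/\lambda\big)\,dx\to 0$, so this integral is $\le 1$ for all large $n$, i.e.\ $\|h_n-h\|_{L_M}\le\lambda$ eventually. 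Since $\lambda>0$ is arbitrary, the claim follows. Because $C^\infty_0(\Omega)^d\subset L^\infty(\Omega)^d\subset E_M(\Omega)$ and $L^\infty(\Omega)^d$ is dense in $E_M(\Omega)$ by definition, it now suffices to approximate an arbitrary $g\in L^\infty(\Omega)^d$ in $L_M$ norm by functions from $C^\infty_0(\Omega)^d$.

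Next I would perform this approximation in two steps. For $\delta>0$ put $\Omega_\delta=\{x\in\Omega:\dist(x,\partial\Omega)>\delta\}$ and observe that $g\chi_{\Omega_\delta}\to g$ a.e.\ as $\delta\to 0^+$, with $|g\chi_{\Omega_\delta}|\le\|g\|_{L^\infty(\Omega)}$; by the fact proved above, $g\chi_{\Omega_\delta}\to g$ in $L_M$ norm. Fixing $\delta$, extend $g\chi_{\Omega_\delta}$ by zero to $\R^d$ and mollify, $g_\eta:=(g\chi_{\Omega_\delta})\ast\phi^\eta$ with $\phi^\eta$ a standard mollifier; for $\eta<\delta/2$ one has $g_\eta\in C^\infty_0(\Omega)^d$, $\|g_\eta\|_{L^\infty(\Omega)}\le\|g\|_{L^\infty(\Omega)}$, and $g_\eta\to g\chi_{\Omega_\delta}$ a.e.\ as $\eta\to 0^+$ (at Lebesgue points, since $g\chi_{\Omega_\delta}\in L^1(\R^d)$). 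Applying the fact above once more gives $g_\eta\to g\chi_{\Omega_\delta}$ in $L_M$ norm. A diagonal choice of $\delta$ and $\eta$ then yields a sequence in $C^\infty_0(\Omega)^d$ converging to $g$ in $L_M$ norm, which completes the proof.

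The step that I expect to require the most care is the first one: one must resist proving only modular convergence (which is what mollification delivers most naturally) and instead verify convergence at every scale $\lambda>0$, as the Luxemburg norm ``sees'' all scales in the absence of a $\Delta_2$ condition on $M$. The uniform $L^\infty$ bounds on all the approximants, together with the integrability of the majorant $m_2(2C/\lambda)$ over the bounded set $\Omega$, are precisely what make this verification routine.
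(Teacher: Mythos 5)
Your proof is correct. The core observation is the same one the paper uses: if a sequence of $L^\infty(\Omega)^d$ functions is uniformly bounded by $C$ and converges a.e., then for every fixed $\lambda>0$ the integrands $M\bigl(x,(h_n-h)/\lambda\bigr)$ are dominated by the constant $m_2(2C/\lambda)$ on the bounded set $\Omega$ and converge to $0$ a.e., so dominated convergence forces the modular to vanish at every scale and hence $\|h_n-h\|_{L_M}\to 0$. This is exactly what makes the argument work without any $\Delta_2$ assumption, and both you and the paper exploit it. Where you differ is the route from a general $g\in L^\infty(\Omega)^d$ to smooth compactly supported approximants: the paper first applies the Luzin theorem together with the Tietze--Urysohn extension to produce $v_n\in C_c(\Omega)^d$ with $\|v_n\|_{L^\infty}\le\|v\|_{L^\infty}$ and $v_n=v$ off a set of measure $<1/n$ (so the modular estimate there is a direct measure bound, no a.e.\ convergence needed), and then mollifies the resulting continuous functions; you instead truncate to $g\chi_{\Omega_\delta}$, which is merely $L^\infty$ with compact support, and mollify directly, invoking the Lebesgue differentiation theorem to get a.e.\ convergence of the mollifications. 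Both routes are sound and of comparable depth; yours is slightly more streamlined in that one lemma handles both approximation steps and no topological extension theorem is required, whereas the paper's first step is a purely measure-theoretic estimate that avoids any appeal to Lebesgue points.
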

  \begin{proof}
  	Since $E_M(\Omega)$ is a closure of $L^\infty(\Omega)^d$ in $L_M$ norm, it is enough to prove that $C^\infty_0(\Omega)^d$ is dense in $L^\infty(\Omega)^d$ in $L_M(\Omega)$ norm. We first prove that $C_c(\Omega)^d$ is $L_M$ dense in $L^\infty(\Omega)^d$. Take $v\in L^\infty(\Omega)^d$. By the Luzin theorem there exists a sequence of compact sets $E_n$ such that $|\Omega\setminus E_n| < 1/n$ and the functions $v|_{E_n}:E_n\to \R$ are continuous. These functions, by the Tietze--Urysohn lemma can be extended to functions $v_n$ in $C_c(\Omega)^d$ such that $\|v_n\|_{L^\infty} \leq \|v\|_{L^\infty}$.  Now
  	\begin{align*}
  	&\int_{\Omega}M\left(x,\frac{v_n(x)-v(x)}{\lambda}\right)\, dx = \int_{\Omega\setminus{E_n}}M\left(x,\frac{v_n(x)-v(x)}{\lambda}\right)\, dx  \leq \int_{\Omega\setminus{E_n}} m_2 \left(\frac{|v_n(x)|+|v(x)|}{\lambda}\right) \,dx\\
  	& \quad \leq m_2 \left(\frac{2\|v\|_{L^\infty(\Omega)^d}}{\lambda}\right)|\Omega\setminus{E_n}| \to 0 \quad \textrm{as}\quad n\to \infty\quad \textrm{for every}\ \ \lambda>0.
  	\end{align*}
  	We now prove that $C^\infty_0(\Omega)^d$ is $L_M$ dense in $C_c(\Omega)^d$. To this end take $v\in C_c(\Omega)^d$ and extend it to $\R^d$ by taking $v=0$ outside $\Omega$. Choose a standard mollifier kernel 
  	$$\theta_\epsilon(x) = \frac{1}{\epsilon^n}\theta\left(\frac{x}{\epsilon}\right)\quad \textrm{where}\quad \theta(x) = \begin{cases}
  	Ce^{\frac{1}{|x|^2}-1}\quad \textrm{for}\quad |x|\leq 1,\\
  	0\quad \textrm{otherwise},
  	\end{cases}$$
  	where the constant $C$ is chosen such that $\int_{\R^n}\theta(x)\, dx = 1$. Define $v^\epsilon(x) = \int_{\R^d} \theta_\epsilon(x-y)v(y)\, dy$. Then, if only $\epsilon$ is small enough, $v^\epsilon \in C^\infty_0(\R^d)$ and there holds the pointwise convergence $\lim_{\epsilon\to 0}v^\epsilon(x) = v(x)$ for almost every $x\in \Omega$. Moreover, $|v^\epsilon(x)| \leq \|v\|_{L^\infty(\Omega)^d}$ for almost every $x\in \Omega$. There holds
  	$$
  	M\left(x,\frac{v_\epsilon(x)-v(x)}{\lambda}\right) \leq m_2 \left(\frac{|v_\epsilon(x)|+|v(x)|}{\lambda}\right) \leq m_2 \left(\frac{2\|v\|_{L^\infty(\Omega)^d}}{\lambda}\right)\quad \textrm{for every}\ \ \lambda>0.
  	$$
  	We can use the Lebesgue dominated convergence theorem to deduce that
  	$$
  	\lim_{\epsilon\to 0}\int_{\Omega} M\left(x,\frac{v_\epsilon(x)-v(x)}{\lambda}\right)\, dx = 0\quad \textrm{for every}\ \ \lambda>0,
  	$$
  	whence $\lim_{\epsilon\to 0}\|v_\epsilon-v\|_{L_M} = 0$, and the proof is complete.
  \end{proof}

  It is not hard to verify, that for $v\in L_M(\Omega)$ there holds
  \begin{align}\label{eq:mest}
  &\|v\|_{L_M} \leq \int_{\Omega}M(x,v(x))\, dx+1,\\
  &\|v\|_{L_M} \leq 1 \Rightarrow \int_{\Omega}M(x,v(x))\, dx \leq \|v\|_{L_M}.
  \end{align}
  
  It is clear that $L^\infty(\Omega)^d \subset E_M(\Omega)$ and $\mathcal{L}_M(\Omega) \subset L_M(\Omega)$. We also observe that $E_M(\Omega)\subset \mathcal{L}_M(\Omega)$. Indeed, if $v_k$ is a sequence in $L^\infty(\Omega)^d$ such that 
  $\|v_k-v\|_{L_M} \to 0$, then 
  $$
  \int_{\Omega}M(x,v(x))\, dx \leq \frac{1}{2}\left(\int_{\Omega}M(x,2v_k(x))\, dx+\int_{\Omega}M(x,2(v(x)-v_k(x)))\, dx\right).
  $$
  Taking $k$ large enough, such that $2\|v-v_k\|_{L_M} \leq 1$, it follows that
  $$
  \int_{\Omega}M(x,v(x))\, dx \leq \frac{1}{2}\int_{\Omega}M(x,2v_k(x))\, dx+\|v-v_k\|_{L_M} \leq \frac{1}{2}|\Omega| m_2(2\|v_k\|_{L^\infty}))+\|v-v_k\|_{L_M} < \infty.
  $$

  Some functional analytic properties of the defined spaces are summarized in the following lemmas \cite{Schappacher, gwiazda_swierczewska, wroblewska, kalousek, skaff}.
  \begin{lemma}\label{lem:hol}If $M$ is an $N$-function, $\xi\in L_M(\Omega)$, and $\eta\in L_{\widetilde{M}}(\Omega)$, then the following generalized H\"{o}lder inequality holds
  	$$
  	\left|\int_{\Omega} \xi \cdot \eta\, dx\right| \leq 2  \|\xi\|_{L_M(\Omega)}\|\eta\|_{L_{\widetilde{M}}(\Omega)}
  	$$
  \end{lemma}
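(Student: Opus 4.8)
The plan is to derive the inequality directly from the pointwise Fenchel--Young inequality (item~1 of the preceding Lemma) together with the defining property of the Luxemburg norm, with no appeal to density or to any functional-analytic structure of $L_M(\Omega)$. First I would dispose of the degenerate cases. If $\|\xi\|_{L_M(\Omega)}=0$, then by definition $\int_\Omega M\big(x,\xi(x)/\lambda\big)\,dx \le 1$ for every $\lambda>0$; choosing a sequence $\lambda_n\to 0^+$, using the lower bound $M(x,\xi/\lambda_n)\ge m_1(|\xi|/\lambda_n)$ coming from stability of the $N$-function (Lemma~\ref{lem:nfunction}), and applying Fatou's lemma, one gets $\int_\Omega \liminf_n m_1(|\xi(x)|/\lambda_n)\,dx \le 1$; since $m_1(t)\to\infty$ as $t\to\infty$, this forces $\xi=0$ a.e. The same reasoning with $\widetilde M$ shows that $\|\eta\|_{L_{\widetilde M}(\Omega)}=0$ implies $\eta=0$ a.e. In either case both sides of the asserted inequality vanish.

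In the main case I would assume both norms are strictly positive and fix arbitrary $\lambda>\|\xi\|_{L_M(\Omega)}$ and $\mu>\|\eta\|_{L_{\widetilde M}(\Omega)}$. Because $s\mapsto M(x,\xi(x)/s)$ is nonincreasing, the definition of the Luxemburg norm yields $\int_\Omega M\big(x,\xi(x)/\lambda\big)\,dx \le 1$, and likewise $\int_\Omega \widetilde M\big(x,\eta(x)/\mu\big)\,dx \le 1$ (note this uses monotonicity in the scaling parameter, not attainment of the infimum, which is not assumed). Applying the Fenchel--Young inequality at almost every $x$ to the vectors $\xi(x)/\lambda$ and $\eta(x)/\mu$ gives
$$
\frac{1}{\lambda\mu}\,|\xi(x)\cdot\eta(x)| \;\le\; M\!\left(x,\frac{\xi(x)}{\lambda}\right) + \widetilde M\!\left(x,\frac{\eta(x)}{\mu}\right)\qquad\text{for a.e. }x\in\Omega .
$$
Integrating over $\Omega$ and inserting the two modular bounds produces $\frac{1}{\lambda\mu}\int_\Omega|\xi\cdot\eta|\,dx\le 2$, i.e. $\int_\Omega|\xi\cdot\eta|\,dx\le 2\lambda\mu$. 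Letting $\lambda\downarrow\|\xi\|_{L_M(\Omega)}$ and $\mu\downarrow\|\eta\|_{L_{\widetilde M}(\Omega)}$, and using $\big|\int_\Omega\xi\cdot\eta\,dx\big|\le\int_\Omega|\xi\cdot\eta|\,dx$, gives the claimed estimate with constant $2$.

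This proof is essentially routine, so I do not anticipate a genuine obstacle: the factor $2$ is built into the sum on the right-hand side of the Fenchel--Young inequality. The only two points requiring mild care are the ones flagged above, namely that the modular is $\le 1$ for every $\lambda$ strictly above the Luxemburg norm, and that a vanishing Luxemburg norm forces the function to be zero a.e.\ — the latter being where stability of the $N$-function enters.
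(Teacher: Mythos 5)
Your proof is correct, and it is the standard argument for the generalized H\"older inequality in Musielak--Orlicz spaces: normalize by scalars just above the Luxemburg norms, invoke the pointwise Fenchel--Young inequality, integrate, and let the scalars decrease to the norms. The paper itself states this lemma without proof, citing it from the literature, so there is nothing to compare against; your argument matches what those references do.

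Two small remarks. First, the monotonicity you invoke (that $s\mapsto M(x,\xi(x)/s)$ is nonincreasing) is the key point that makes ``$\lambda>\|\xi\|_{L_M}$ implies the modular at $\lambda$ is $\le 1$'' legitimate without assuming the infimum is attained; you flag this correctly, and it follows from convexity of $M(x,\cdot)$ together with $M(x,0)=0$. Second, the separate treatment of the degenerate case $\|\xi\|_{L_M}=0$ (or $\|\eta\|_{L_{\widetilde M}}=0$) is not actually needed: once you have $\int_\Omega|\xi\cdot\eta|\,dx\le 2\lambda\mu$ for all admissible $\lambda,\mu$, sending $\lambda\to 0^+$ (resp.\ $\mu\to 0^+$) already forces both sides to vanish. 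Your Fatou/stability argument showing that a vanishing Luxemburg norm forces $\xi=0$ a.e.\ is correct, but it proves more than the lemma requires.
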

  
  \begin{lemma}
  	Let $M$ be an $N$-function. Then\begin{itemize}
  		\item $E_M(\Omega)$ is separable.
  		\item $E_M(\Omega)^* = L_{\widetilde{M}}(\Omega)$.
  		\item $E_M(\Omega) = L_M(\Omega)$ if and only if $M\in \Delta_2^\infty$.
  		\item $L_M(\Omega)$ is separable if and only if $M\in \Delta_2^\infty$.
  		\item $L_M(\Omega)$ is reflexive if and only if both $M\in \Delta_2^\infty$ and $\widetilde{M}\in \Delta_2^\infty$. 
  	\end{itemize}
  \end{lemma}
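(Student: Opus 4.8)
\noindent The plan is to derive all five assertions from the stability bounds $m_1(|\xi|)\le M(x,\xi)\le m_2(|\xi|)$ of Lemma~\ref{lem:nfunction}, the density statement of Theorem~\ref{thm:density}, and the generalized H\"older inequality of Lemma~\ref{lem:hol}, via one recurring device: a \emph{bounded convergence} principle asserting that if $v_n\to v$ a.e.\ in $\Omega$ and $\sup_n\|v_n\|_{L^\infty(\Omega)^d}<\infty$, then $\|v_n-v\|_{L_M}\to0$, because for each $\lambda>0$ the integrands $M(x,(v_n-v)/\lambda)$ are dominated by the fixed integrable constant $m_2(2\sup_n\|v_n\|_{L^\infty}/\lambda)$ and Lebesgue's theorem applies at every scale. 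A first corollary is the identification of $E_M(\Omega)$ with the ``absolutely continuous'' subspace $\{v\in\mathcal L_M(\Omega)\,:\ \int_\Omega M(x,v/\lambda)\,dx<\infty\ \textrm{for every}\ \lambda>0\}$, and a second, combined with Theorem~\ref{thm:density}, is the separability of $E_M(\Omega)$: the countable family of $\mathbb Q^d$-valued simple functions carried by finite unions of dyadic cubes is $L_M$-dense in $L^\infty(\Omega)^d$, hence in $E_M(\Omega)$.

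For $E_M(\Omega)^{*}=L_{\widetilde{M}}(\Omega)$ I would first invoke Lemma~\ref{lem:hol} to embed $L_{\widetilde{M}}(\Omega)$ boundedly and injectively into $E_M(\Omega)^{*}$, and then prove surjectivity: given $\Phi\in E_M(\Omega)^{*}$, each set function $A\mapsto\Phi(\chi_A e_i)$ is a finite signed measure, absolutely continuous with respect to Lebesgue measure since $|A|\to0$ forces $\|\chi_A e_i\|_{L_M}\to0$ by the majorant bound, so Radon--Nikodym gives $g\in L^1(\Omega)^d$ with $\Phi(v)=\int_\Omega v\cdot g\,dx$ first on simple functions and then, by density, on all of $E_M(\Omega)$. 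That $g$ in fact lies in $L_{\widetilde{M}}(\Omega)$ is the standard modular-duality argument: if $\int_\Omega\widetilde{M}(x,g/\lambda)\,dx=\infty$ for every $\lambda$, the equality case of the Fenchel--Young inequality on the sets where $g$ is bounded produces unit-ball functions $v_n\in L^\infty(\Omega)^d$ with $\int_\Omega v_n\cdot g\,dx\to\infty$, contradicting boundedness of $\Phi$.

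The three $\Delta_2^\infty$ dichotomies come next. For $E_M(\Omega)=L_M(\Omega)\iff M\in\Delta_2^\infty$: if $M\in\Delta_2^\infty$ and $v\in L_M(\Omega)$, I would show the truncations $v\chi_{\{|v|\le n\}}$ converge to $v$ in norm by iterating the doubling inequality a bounded number of times, passing from the scale where the modular of $v$ is finite down to any prescribed smaller scale, the error being supported on $\{|v|>n\}$ whose measure vanishes; hence $v\in E_M(\Omega)$. Conversely, if $\Delta_2^\infty$ fails I would build a lacunary function $v=\sum_n\xi_n\chi_{A_n}$ with $|\xi_n|\uparrow\infty$, the $A_n$ pairwise disjoint of small measure chosen inside the sets where $M(x,2\xi_n)$ dwarfs $M(x,\xi_n)$, arranged so that $\int_\Omega M(x,v)\,dx<\infty$ while $\int_\Omega M(x,2v)\,dx=\infty$, which by the characterization above puts $v$ in $L_M(\Omega)\setminus E_M(\Omega)$. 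Separability of $L_M(\Omega)\iff M\in\Delta_2^\infty$ is then easy: the forward direction is immediate since $L_M(\Omega)=E_M(\Omega)$ is separable, and for the converse one refines the lacunary construction so that the functions $\sum_{n\in S}\xi_n\chi_{A_n}$, $S\subseteq\mathbb N$, form a $\delta$-separated family of cardinality $2^{\aleph_0}$. Finally, reflexivity $\iff M,\widetilde{M}\in\Delta_2^\infty$: if both doubling conditions hold then $L_M(\Omega)=E_M(\Omega)$ and $L_{\widetilde{M}}(\Omega)=E_{\widetilde{M}}(\Omega)$, so the duality point gives $L_M(\Omega)^{**}=E_M(\Omega)^{**}=L_{\widetilde{M}}(\Omega)^{*}=E_{\widetilde{M}}(\Omega)^{*}=L_M(\Omega)$, the identification realized by the canonical embedding; whereas if $M\notin\Delta_2^\infty$ then $E_M(\Omega)$ is a closed subspace of $L_M(\Omega)$ with $E_M(\Omega)^{*}=L_{\widetilde{M}}(\Omega)$, so its bidual $L_{\widetilde{M}}(\Omega)^{*}$ already contains, through the H\"older pairing, the strictly larger space $L_M(\Omega)$, so $E_M(\Omega)$ — and hence $L_M(\Omega)$ — is not reflexive; the case $\widetilde{M}\notin\Delta_2^\infty$ is symmetric, transporting non-reflexivity of $E_{\widetilde{M}}(\Omega)$ to $L_{\widetilde{M}}(\Omega)=E_M(\Omega)^{*}$ and thence to $E_M(\Omega)$ and $L_M(\Omega)$.

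The only genuinely delicate part, I expect, is the lacunary construction underlying the ``only if'' halves of the last three items: in the present fully $x$-dependent, anisotropic, ``far from the origin'' generality one must localize the failure of $\Delta_2^\infty$ on sets of positive measure with sufficient geometric control, and then choose the amplitudes $\xi_n$ and the masses $|A_n|$ so that simultaneously $\int_\Omega M(x,v)\,dx<\infty$, $\int_\Omega M(x,2v)\,dx=\infty$, and the $\delta$-separation estimate holds for all continuum-many variants. Everything else is a routine combination of the minorant/majorant stability bounds, dominated convergence, the generalized H\"older inequality and Radon--Nikodym, which is why in the paper these facts are simply quoted from \cite{Schappacher, gwiazda_swierczewska, wroblewska, kalousek, skaff}.
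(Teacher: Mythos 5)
The paper itself does not prove this lemma: it is stated and immediately delegated to the cited references, so there is no in-paper argument to compare your approach against. Taking your outline on its own terms, the bulk of it is a sound and standard route: the bounded-convergence principle for $L^\infty$-bounded a.e.\ convergent sequences, the resulting identification of $E_M(\Omega)$ with the absolutely continuous modular subspace, separability of $E_M(\Omega)$ from countable rational simple functions, the Radon--Nikodym path to $E_M(\Omega)^*=L_{\widetilde M}(\Omega)$ together with the modular estimate placing the density in $L_{\widetilde M}(\Omega)$, the iteration of the doubling inequality for the implication $\Delta_2^\infty\Rightarrow E_M(\Omega)=L_M(\Omega)$ (note that on the truncation tails $v\chi_{\{|v|>n\}}$ you lose the uniform $L^\infty$ bound, so there you should argue from $\int_\Omega M(x,v/\lambda)\,dx<\infty$ and dominated convergence on $\{|v|>n\}$ rather than from the bounded-convergence principle itself), and the deduction of the reflexivity dichotomy from items 2, 3 and the closed-subspace principle. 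These parts are essentially correct modulo routine measurable-selection details in the Fenchel--Young step.

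The genuine gap is exactly the one you flag but do not resolve: the lacunary construction behind the three ``only if'' implications. You need, when $M\notin\Delta_2^\infty$, pairwise disjoint sets $A_n\subset\Omega$ and measurable amplitudes $\xi_n$ with $|\xi_n|\to\infty$, $\sum_n\int_{A_n}M(x,\xi_n)\,dx<\infty$ but $\sum_n\int_{A_n}M(x,2\xi_n)\,dx=\infty$. In the present setting the negation of $\Delta_2^\infty$ is quantified over \emph{all} $(c,h)$ with $c>0$, $h\in L^1(\Omega)$ and $h\ge0$; feeding in $c=c_n$ and $h=0$ produces a positive-measure set $E_n$ and a selection $\xi_n$ with doubling ratio $>c_n$ on $E_n$, but $|E_n|$ carries no lower bound and is expected to shrink as $c_n$ grows, while the mass you may place on $A_n\subset E_n$ is capped by $\int_{E_n}M(x,\xi_n)\,dx$ and bounded below by $m_1(n)|A_n|$ from the stability minorant. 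The na\"ive scheme ($c_n=2^n$, target $a_n:=\int_{A_n}M(x,\xi_n)\,dx\approx2^{-n}$) may be unachievable because $\int_{E_n}M(x,\xi_n)\,dx$ can be smaller than $2^{-n}$, and trying to enlarge $c_n$ to compensate is circular since $E_n$ depends on $c_n$. A complete proof must break this circularity --- for instance by analysing the $x$-dependent doubling function $c_R(x)=\esssup_{|\xi|\ge R}M(x,2\xi)/M(x,\xi)$ and separating the cases $|\{c_R=\infty\}|>0$ versus $c_R$ finite a.e.\ but not in $L^\infty(\Omega)$ --- and your proposal stops short of supplying this step, on which items 3, 4, and the ``only if'' half of item 5 all hinge.
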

  In this article we assume nowhere that both $M$ and $\widetilde{M}$ satisfy the $\Delta_2$ condition, so that we have to deal with the lack of reflexivity. We  also deal with the case where $M$ does not satisfy the $\Delta_2$ condition, so we cannot use the separability of $L_M$. Despite this difficulties we are still in position to obtain the existence results using the functional analytic tools developed in 
  \cite{kalousek, gwiazda_skrzypczak, gwiazda_wittbold,
  	gwiazda_corrigendum}.  
  
  If $M$ is an $N$-function, we define the space
  $$
  V_0^M = \{ v\in W^{1,1}_0(\Omega)\, :\ \nabla v \in L_M(\Omega) \}.
  $$
  
  We will need the following version of the modular Poincar\'{e} inequality, cf. \cite[Lemma 1]{leonetti}, \cite[Lemma 3]{talenti}, or more recent works \cite[Theorem 2.2]{gwiazda_skrzypczak}, \cite[Corollary 4.2]{giannetti}.
  \begin{theorem}\label{thm:convergence}
  	Let $m:[0,\infty)\to [0,\infty)$ be an $N$-function. There exist constants $\lambda>0$ and $C>0$ such that for every $u\in W^{1,1}_0(\Omega)$ satisfying $\int_{\Omega}m(\lambda|\nabla u|)\, dx < \infty $ there holds
  	$$
  	\int_{\Omega}m(|u|)\, dx \leq C \int_{\Omega}m(\lambda|\nabla u|)\, dx.
  	$$
  \end{theorem}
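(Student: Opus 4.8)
The plan is to reduce the estimate, via rearrangement, to a one-dimensional weighted Hardy-type inequality for the decreasing rearrangement $u^*$ of $u$, which can then be proved directly by Jensen's inequality; the crucial point that avoids any $\Delta_2$ assumption on $m$ is that the Hardy kernel appearing in the reduction has a \emph{uniformly bounded integral}, so that Jensen can be applied once, and the freedom in $\lambda$ is used only to absorb a normalizing constant. Since $W^{1,1}_0(\Omega)\subset L^\infty(\Omega)$ when $d=1$, in that case the bound follows at once from Jensen's inequality with $\lambda=|\Omega|$; I therefore assume $d\ge 2$.

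First I would invoke equimeasurability, $\int_\Omega m(|u|)\,dx=\int_0^{|\Omega|}m(u^*(s))\,ds$, together with the P\'{o}lya--Szeg\"{o} inequality for a general Young function (cf.\ \cite{talenti,leonetti}), which, written in rearrangement variables and recalling that $u^*$ is locally absolutely continuous on $(0,|\Omega|)$, states that $\int_0^{|\Omega|}m\big(\lambda d\omega_d^{1/d}s^{(d-1)/d}|(u^*)'(s)|\big)\,ds\le\int_\Omega m(\lambda|\nabla u|)\,dx$ for every $\lambda>0$. Hence it suffices to establish the one-dimensional inequality
\[
\int_0^{|\Omega|}m(h(s))\,ds\ \le\ C\int_0^{|\Omega|}m\big(\lambda d\omega_d^{1/d}s^{(d-1)/d}|h'(s)|\big)\,ds
\]
for every non-increasing, locally absolutely continuous $h\colon(0,|\Omega|)\to[0,\infty)$ with $h(|\Omega|)=0$; we may of course assume the right-hand side finite.

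Set $F(s):=\lambda d\omega_d^{1/d}s^{(d-1)/d}|h'(s)|$, so that $h(s)=\tfrac{1}{\lambda d\omega_d^{1/d}}\int_s^{|\Omega|}\sigma^{-(d-1)/d}F(\sigma)\,d\sigma$. The decisive observation is that $\int_s^{|\Omega|}\sigma^{-(d-1)/d}\,d\sigma=d\big(|\Omega|^{1/d}-s^{1/d}\big)\le d|\Omega|^{1/d}=:K_0$, a bound independent of $s$. Applying Jensen's inequality to the measure $\sigma^{-(d-1)/d}\chi_{[s,|\Omega|]}(\sigma)\,d\sigma$, of total mass $\le K_0$, and using convexity of $m$ and $m(0)=0$, one gets $m\big(\tfrac{\lambda d\omega_d^{1/d}}{K_0}h(s)\big)K_0\le\int_s^{|\Omega|}m(F(\sigma))\sigma^{-(d-1)/d}\,d\sigma$; integrating in $s$ and using Tonelli's theorem yields
\[
K_0\int_0^{|\Omega|}m\Big(\tfrac{\lambda d\omega_d^{1/d}}{K_0}h(s)\Big)\,ds\ \le\ \int_0^{|\Omega|}m(F(\sigma))\sigma^{1/d}\,d\sigma\ \le\ |\Omega|^{1/d}\int_0^{|\Omega|}m(F(\sigma))\,d\sigma.
\]
Choosing $\lambda:=(|\Omega|/\omega_d)^{1/d}$ makes $\tfrac{\lambda d\omega_d^{1/d}}{K_0}=1$, and since then $K_0=d|\Omega|^{1/d}$ this is exactly the desired one-dimensional inequality with $C=1/d$. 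Undoing the reduction through P\'{o}lya--Szeg\"{o} and equimeasurability proves the theorem with $\lambda=(|\Omega|/\omega_d)^{1/d}$ and $C=1/d$.

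The only genuinely delicate point is the middle step, namely obtaining a modular estimate in the absence of a $\Delta_2$ condition; it is resolved precisely by the uniform bound on $\int_s^{|\Omega|}\sigma^{-(d-1)/d}\,d\sigma$, which permits a single application of Jensen's inequality rather than an iteration (iteration being what typically forces a doubling hypothesis). The supporting facts I rely on — local absolute continuity of the decreasing rearrangement of a $W^{1,1}_0$ function and the P\'{o}lya--Szeg\"{o} inequality for convex $m$ with $m(0)=0$ — are classical; alternatively, one may bypass symmetrization and argue directly on the distribution function $\mu_u$ via the co-area formula and the isoperimetric inequality, at the price of having to cope with its possible lack of absolute continuity.
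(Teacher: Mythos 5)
Your argument is correct and follows essentially the same symmetrization route as the references the paper cites for this statement (Talenti, Leonetti, Gwiazda--\'Swierczewska-Gwiazda--Wr\'oblewska); the paper itself gives no proof, only the citations and the remark that a ``careful analysis'' of the cited argument yields the $\Delta_2$-free version with an explicit $\lambda(\Omega,d)$. Your write-up supplies exactly that analysis, and the key observation --- that the Hardy kernel $\sigma^{-(d-1)/d}$ has $s$-uniformly bounded mass $d|\Omega|^{1/d}$ on $[s,|\Omega|]$, so Jensen can be applied once (padding the measure by a point mass at a zero of $F$, which is harmless since $m(0)=0$) and the scaling absorbed into $\lambda=(|\Omega|/\omega_d)^{1/d}$ --- is precisely what replaces the doubling assumption; you also correctly note the one-dimensional case and the standard prerequisites (local absolute continuity of $u^*$, $u^*(|\Omega|^-)=0$ for $u\in W^{1,1}_0$, and P\'olya--Szeg\H{o} for Young functions).
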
 
  \begin{remark}
  	In \cite{gwiazda_skrzypczak} it is proved that the above theorem holds with $\lambda=1$ provided $m$ satisfies the $\Delta_2$ condition. A careful analysis of its proof, however, reveals that without the $\Delta_2$ condition the result holds with a constant $\lambda$ not necessary equal to one, but dependent only on $\Omega$ and $d$. 
  \end{remark}
  We remind the definition of modular convergence, cf. \cite{gwiazda_skrzypczak, kalousek, gwiazda_wroblewska, gwiazda_swierczewska}.

  \begin{definition}
  	A sequence $\{ v_m \}_{m=1}^\infty$ of measurable $\R^d$ valued function on $\Omega$ is said to converge modularly to a function $v$ if there exists $\lambda>0$ such that 
  	$$
  	\lim_{m\to\infty}\int_{\Omega} M\left(x,\frac{v_m-v}{\lambda}\right)\, \, dx = 0.
  	$$
  	We denote the modular convergence by $v_m \xrightarrow{M} v$.
  	Equivalently, cf. \cite[Lemma 2.1]{gwiazda_swierczewska}, $\{v_m\}_{m=1}^\infty$ converges modularly to $v$ if $v_m\to v$ in measure and the sequence $\{ M(\cdot, \lambda v_m) \}_{m=1}^\infty$ is uniformly integrable for some $\lambda>0$.
  \end{definition}
  
  \begin{lemma}(cf. \cite[Lemma 2.2]{gwiazda_swierczewska}) Let $M$ be an $N$-function. If, for constants $c,\lambda>0$, we have $\int_{\Omega}M(x,\lambda v_m)\, dx \leq c$ for all $m\in \mathbb{N}$ then the sequence $\{v_m\}_{m=1}^\infty$ is uniformly integrable. 
  \end{lemma}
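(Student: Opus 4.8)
The plan is to reduce the statement, via the stability of $N$-functions, to a de la Vall\'ee--Poussin type argument for a single scalar superlinear function. First I would invoke Lemma \ref{lem:nfunction}: since $M$ is an $N$-function it is stable, so there is a homogeneous one dimensional $N$-function $m_1:[0,\infty)\to[0,\infty)$ with $m_1(|\xi|)\le M(x,\xi)$ for every $\xi\in\R^d$ and almost every $x\in\Omega$. Consequently the hypothesis gives $\int_\Omega m_1(\lambda|v_m|)\,dx\le\int_\Omega M(x,\lambda v_m)\,dx\le c$ for all $m$. Thus it suffices to show that a sequence $\{w_m\}$ with $\sup_m\int_\Omega m_1(|w_m|)\,dx\le c$ is uniformly integrable, and then apply this with $w_m=\lambda v_m$; uniform integrability of $\{\lambda v_m\}$ is equivalent to that of $\{v_m\}$ since multiplication by the fixed constant $1/\lambda$ does not affect it.

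For the core step I would argue directly, exploiting the superlinear growth at infinity of $m_1$. Set $\theta(K)=\sup_{s\ge K} s/m_1(s)$; since $m_1$ is a homogeneous one dimensional $N$-function, (N4) (equivalently (N4$'$)) gives $m_1(s)/s\to\infty$, so $s/m_1(s)$ is continuous on $(0,\infty)$ and tends to $0$ at infinity, hence $\theta(K)<\infty$ for every $K>0$ and $\theta(K)\to 0$ as $K\to\infty$. Given $\varepsilon>0$ and a measurable set $E\subset\Omega$, I split $\int_E|w_m|\,dx=\int_{E\cap\{|w_m|\le K\}}|w_m|\,dx+\int_{E\cap\{|w_m|>K\}}|w_m|\,dx$. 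The first integral is at most $K|E|$. On $\{|w_m|>K\}$ one has the pointwise bound $|w_m|=\frac{|w_m|}{m_1(|w_m|)}\,m_1(|w_m|)\le\theta(K)\,m_1(|w_m|)$, so the second integral is at most $\theta(K)\int_\Omega m_1(|w_m|)\,dx\le c\,\theta(K)$.

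Finally I would fix $K$ so large that $c\,\theta(K)<\varepsilon/2$ and set $\delta=\varepsilon/(2K)$; for any $E$ with $|E|<\delta$ the two estimates combine to give $\sup_m\int_E|w_m|\,dx<\varepsilon$, which is the required uniform integrability (a uniform $L^1$ bound $\sup_m\int_\Omega|w_m|\,dx\le K|\Omega|+c\,\theta(K)$ drops out of the same splitting with $E=\Omega$, using that $\Omega$ is bounded). I do not expect a genuine obstacle here: the only point needing care is bookkeeping with the constant $\lambda$ — the hypothesis controls $m_1(\lambda|v_m|)$ rather than $m_1(|v_m|)$, which is exactly why one works with $w_m=\lambda v_m$ — together with the elementary observation that $s/m_1(s)$ is bounded on $[K,\infty)$ for $K$ large.
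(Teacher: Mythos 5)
Your proof is correct and follows essentially the same route as the paper's: both reduce to the homogeneous minorant $m_1$ coming from stability of the $N$-function and then exploit the superlinear growth $m_1(s)/s\to\infty$ to control the tail (you via $\theta(K)=\sup_{s\ge K}s/m_1(s)$ and the small-sets criterion, the paper via the equivalent tail-integral criterion (i), writing $c\ge\lambda\int_{\{|v_m|\ge R\}}\frac{m_1(\lambda|v_m|)}{\lambda|v_m|}|v_m|\,dx\ge\lambda D\int_{\{|v_m|\ge R_0(D)\}}|v_m|\,dx$). The bookkeeping with $\lambda$ and the auxiliary uniform $L^1$ bound are handled correctly.
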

  \begin{proof}
  	For every $m\in \mathbb{N}$ it holds
  	$$
  	c\geq \int_{\{x\in \Omega\, :\ |v_m(x)|\geq R\}} M(x,\lambda v_m(x))\, dx \geq \lambda  \int_{\{x\in \Omega\, :\ |v_m(x)|\geq R\}} \frac{m_1(\lambda |v_m(x)|)}{\lambda |v_m(x)|} |v_m(x)|\, dx.
  	$$
  	As $m_1$ is an $N$-function, for any $D>0$ there exists $R_0(D)>0$ such that for any $s\geq R_0$ there holds $\frac{m_1(\lambda s)}{\lambda s} \geq D$. Hence
  	$$
  	\frac{c}{\lambda D} \geq \int_{\{x\in \Omega\, :\ |v_m(x)|\geq R_0(D)\}} |v_m(x)|\, dx,
  	$$
  	and the assertion follows easily. 
  \end{proof}
  The following approximation theorem which has been proved in \cite[Theorem 3.1]{chlebicka_parabolic2} is valid in nonreflexive and nonseparable Musielak--Orlicz spaces.
  \begin{theorem}\label{thm:approx}
  	Let $\Omega$ be a Lipschitz domain and let an $N$-function $M$ satisfy (C2). Then for any $u\in L^{\infty}(\Omega)\cap V_0^M$ there exists a sequence $\{u_m\}_{m=1}^\infty$ of functions belonging to $C_0^\infty(\Omega)$ such that 
  	$u_m\to u$ strongly in $L^1(\Omega)$ and $\nabla u_m \xrightarrow{M} \nabla u$ in $L_M(\Omega).$ Moreover, there exists a constant $c=c(\Omega) > 0$, such that $\|u_m\|_{L^{\infty}(\Omega)}\leq c\|u\|_{L^{\infty}(\Omega)}.$
  \end{theorem}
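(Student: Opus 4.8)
The plan is to establish the claim, following \cite[Theorem 3.1]{chlebicka_parabolic2}, by combining a boundary-scaling reduction (which replaces $u$ by functions supported strictly inside $\Omega$) with a mollification step whose modular error is controlled precisely by (C2). First I would cover $\Omega$ by finitely many open sets $U_0,\dots,U_N$ with $\overline{U_0}\subset\Omega$ and each $U_j\cap\Omega$ (for $j\ge1$) star-shaped with respect to some ball, take a subordinate smooth partition of unity $\{\theta_j\}$, and write $u=\sum_{j}\theta_j u$. Each $\theta_j u$ again lies in $L^\infty(\Omega)\cap V_0^M$ (since $|\theta_j|\le1$ and $M(x,0)=0$ give $M(x,\theta_j\nabla u)\le M(x,\nabla u)$, while $u\nabla\theta_j\in L^\infty(\Omega)\subset L_M(\Omega)$), so it suffices to approximate each $\theta_j u$. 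On a boundary piece, after translating the reference ball to the origin, the dilations $u_\delta(x):=(\theta_j u)((1+\delta)x)$ are compactly supported in $U_j\cap\Omega$, converge to $\theta_j u$ in $W^{1,1}_0(\Omega)$, satisfy $\|u_\delta\|_{L^\infty(\Omega)}\le\|\theta_j u\|_{L^\infty(\Omega)}$, and $\nabla u_\delta\xrightarrow{M}\nabla(\theta_j u)$ (continuity of dilations with respect to the modular, of the same nature as the estimate below); the interior piece $\theta_0 u$ is already compactly supported. Thus it is enough to approximate a compactly supported $w\in L^\infty(\Omega)\cap V_0^M$ by $C_0^\infty(\Omega)$ functions, the partition of unity contributing only the fixed factor $c(\Omega)=\sum_j\|\theta_j\|_{L^\infty(\Omega)}$ to the final $L^\infty$ bound.

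For the mollification step set $w^\epsilon:=\phi^\epsilon*w\in C_0^\infty(\Omega)$, with $\phi^\epsilon$ a standard mollifier supported in $B(0,\epsilon)$. Then $w^\epsilon\to w$ in $W^{1,1}(\Omega)$ and $\|w^\epsilon\|_{L^\infty(\Omega)}\le\|w\|_{L^\infty(\Omega)}$, so the only real issue is $\nabla w^\epsilon\xrightarrow{M}\nabla w$. Since $\nabla w^\epsilon\to\nabla w$ in measure, by the characterisation of modular convergence it suffices to show that $\{M(\cdot,\lambda\nabla w^\epsilon)\}_\epsilon$ is uniformly integrable for some $\lambda>0$; I would fix $\lambda$ small enough that $\int_\Omega M(x,\lambda\nabla w)\,dx<\infty$ (possible since $\nabla w\in L_M(\Omega)$). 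Tiling $\R^d$ by cubes $Q$ of diameter comparable to $\epsilon$ and invoking (C2) with $\delta\sim\epsilon$: on the set where $|\nabla w^\epsilon(x)|>|\xi_0|$,
$$M(x,\lambda\nabla w^\epsilon(x))\le\Theta(\delta,\lambda|\nabla w^\epsilon(x)|)\,\widetilde{\widetilde{M_Q}}(\lambda\nabla w^\epsilon(x)),$$
and Jensen's inequality applied to the convex minorant $\widetilde{\widetilde{M_Q}}\le M_Q\le M(z,\cdot)$ gives $\widetilde{\widetilde{M_Q}}(\lambda\nabla w^\epsilon(x))\le\bigl(\phi^\epsilon*[M(\cdot,\lambda\nabla w)]\bigr)(x)$. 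Because $w\in W^{1,1}(\Omega)$ one has the crude bound $|\nabla w^\epsilon(x)|\le\|\phi^\epsilon\|_{L^\infty}\|\nabla w\|_{L^1(\Omega)}\lesssim\epsilon^{-d}\sim\delta^{-d}$, so the monotonicity of $\Theta$ together with \eqref{eq:condition_smoothness} keeps the factor $\Theta(\delta,\lambda|\nabla w^\epsilon(x)|)$ bounded as $\epsilon\to0$; on the complementary set $M(x,\lambda\nabla w^\epsilon(x))\le m_2(\lambda|\xi_0|)$. Hence $M(\cdot,\lambda\nabla w^\epsilon)\le C\,\phi^\epsilon*[M(\cdot,\lambda\nabla w)]+m_2(\lambda|\xi_0|)$ on $\Omega$ with $C$ independent of $\epsilon$, and the right-hand side converges in $L^1(\Omega)$ and is therefore uniformly integrable, yielding the required modular convergence.

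Finally, letting the scaling parameter $\delta$ and the mollification radius $\epsilon=\epsilon(\delta)$ tend to $0$ and running a diagonal argument over the finitely many pieces produces a single sequence $u_m\in C_0^\infty(\Omega)$ with $u_m\to u$ in $L^1(\Omega)$, $\nabla u_m\xrightarrow{M}\nabla u$, and $\|u_m\|_{L^\infty(\Omega)}\le c(\Omega)\|u\|_{L^\infty(\Omega)}$. The main obstacle is the single modular estimate above: it is exactly here that the $x$-dependence of $M$ could produce a Lavrentiev gap, and (C2) — through the matching of the cube scale, the mollification scale, and the a priori bound $|\nabla w^\epsilon|\lesssim\epsilon^{-d}$ coming only from $w\in W^{1,1}$ — is precisely the hypothesis that closes it.
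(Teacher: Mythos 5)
The paper does not actually prove Theorem~\ref{thm:approx}; it states immediately before the theorem that the result ``has been proved in \cite[Theorem 3.1]{chlebicka_parabolic2}'' and invokes it as a black box. Your proposal reconstructs that argument — localization via a partition of unity subordinate to a star-shaped cover of the Lipschitz domain, dilations to push supports strictly inside $\Omega$, and a mollification step whose modular error is closed by Jensen's inequality together with the cube-wise comparison $M(x,\cdot)\le\Theta(\delta,|\cdot|)\widetilde{\widetilde{M_Q}}$ from (C2) — and the reconstruction is essentially sound.

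One step deserves to be made explicit. From the crude bound $|\nabla w^\epsilon|\le\|\phi\|_{L^\infty}\|\nabla w\|_{L^1(\Omega)}\,\epsilon^{-d}$ and \eqref{eq:condition_smoothness} alone, the factor $\Theta(\delta,\lambda|\nabla w^\epsilon|)$ is not automatically bounded: the multiplicative constant in front of $\epsilon^{-d}$ cannot be absorbed by rescaling $\delta$, because $\Theta$ is \emph{nondecreasing} in its first argument, which works in the wrong direction. The repair is to exploit the freedom in $\lambda$: pick $\lambda$ small enough that both $\int_\Omega M(x,\lambda\nabla w)\,dx<\infty$ and $\lambda\|\phi\|_{L^\infty}\|\nabla w\|_{L^1(\Omega)}\le 1$; then, taking $\delta=\epsilon$ (which still allows $B(x,\epsilon)\subset Q$ with $\mathrm{diam}\,Q<4\delta\sqrt d$), one has $\lambda|\nabla w^\epsilon|\le\delta^{-d}$ outright and $\Theta(\delta,\lambda|\nabla w^\epsilon|)\le\Theta(\delta,\delta^{-d})$ is bounded directly from \eqref{eq:condition_smoothness}. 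Since modular convergence requires only \emph{some} $\lambda>0$, this costs nothing. With that adjustment your argument correctly identifies the mechanism by which (C2) forecloses a Lavrentiev gap at the mollification step, and the remainder (the comparison $\widetilde{\widetilde{M_Q}}\le M_Q\le M(z,\cdot)$, the $L^1$-convergence of $\phi^\epsilon*[M(\cdot,\lambda\nabla w)]$, and the finite diagonal over the partition pieces) is standard.
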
 
  We remind an important property of the modular convergence
  \begin{lemma}\label{lem:modular}
  	(cf. \cite[Proposition 2.2]{gwiazda_swierczewska}) Suppose that the sequences $\{v_k\}_{k=1}^\infty$ and $\{w_k\}_{k=1}^\infty$ are uniformly bounded in $L_M(\Omega)$ and $L_{\widetilde{M}}(\Omega)$, respectively. If $v_k\xrightarrow{M} v$ and $w_k\xrightarrow{\widetilde{M}} w$ then $v_k\cdot w_k \to v\cdot w$ in $L^1(\Omega)$.
  \end{lemma}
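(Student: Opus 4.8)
The plan is to reduce the statement to an application of the Vitali convergence theorem on the finite–measure set $\Omega$, so that it suffices to verify two things: that $v_k\cdot w_k\to v\cdot w$ in measure, and that the family $\{v_k\cdot w_k\}_{k=1}^\infty$ is uniformly integrable. First I would invoke the equivalent description of modular convergence recalled just above the statement: there exist constants $\lambda_1,\lambda_2>0$ with $v_k\to v$ and $w_k\to w$ in measure, and with $\{M(\cdot,\lambda_1 v_k)\}_{k}$ and $\{\widetilde M(\cdot,\lambda_2 w_k)\}_{k}$ uniformly integrable; a Fatou argument along an a.e.-convergent subsequence then also forces $v\in L_M(\Omega)$ and $w\in L_{\widetilde M}(\Omega)$, so both are finite a.e.

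For the uniform integrability of the products I would use the Fenchel--Young inequality (item~1 of the appendix lemma) rescaled to match $\lambda_1,\lambda_2$: for a.e.\ $x\in\Omega$ and all $k$,
\[
|v_k(x)\cdot w_k(x)|=\frac{1}{\lambda_1\lambda_2}\bigl|(\lambda_1 v_k(x))\cdot(\lambda_2 w_k(x))\bigr|\le\frac{1}{\lambda_1\lambda_2}\Bigl(M(x,\lambda_1 v_k(x))+\widetilde M(x,\lambda_2 w_k(x))\Bigr).
\]
Thus $\{v_k\cdot w_k\}_k$ is dominated, up to a fixed multiplicative constant, by a sum of two uniformly integrable families, hence it is itself uniformly integrable.

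To pass from convergence in measure of the factors to that of the product, I would argue via the subsequence principle: given any subsequence of $\{k\}$, pass to a further subsequence along which $v_k\to v$ and $w_k\to w$ a.e.\ in $\Omega$ (possible since convergence in measure admits a.e.-convergent subsequences); then $v_k\cdot w_k\to v\cdot w$ a.e.\ along this sub-subsequence, and, together with the uniform integrability already obtained, the Vitali theorem yields convergence in $L^1(\Omega)$ to $v\cdot w$ along it. Since every subsequence has a further subsequence converging in $L^1(\Omega)$ to the same limit, the whole sequence converges, and in passing $v\cdot w\in L^1(\Omega)$. The only mildly delicate point is this subsequence bookkeeping; the rest is a direct combination of the Fenchel--Young inequality with the characterization of modular convergence quoted in the text, so I do not anticipate a genuine obstacle.
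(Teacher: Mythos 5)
Your proof is correct. The paper does not prove this lemma itself --- it only cites \cite[Proposition 2.2]{gwiazda_swierczewska} --- and your argument (decompose via the Fenchel--Young inequality rescaled by the modular-convergence constants $\lambda_1,\lambda_2$, obtain uniform integrability of the products from uniform integrability of $\{M(\cdot,\lambda_1 v_k)\}$ and $\{\widetilde M(\cdot,\lambda_2 w_k)\}$, combine with convergence in measure of the factors, and conclude by Vitali) is the standard and essentially canonical route, matching the equivalent characterization of modular convergence quoted in the text immediately above the lemma.
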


\section{Some useful tools of nonlinear analysis.}
We recall some tools useful in the arguments of this article. \begin{definition}
	A sequence $\{f_m\}_{m=1}^\infty$ of measurable functions $f_m:\Omega\to \R$ is uniformly integrable if, equivalently, one of the following conditions holds:
	\begin{itemize}
		\item[(i)] $$
		\lim_{R\to \infty}\left(\sup_{m\in\mathbb{N}}\int_{\{ x\in \Omega\, :\ |f_m(x)| \geq R \}}|f_m(x)|\, dx\right) = 0.
		$$
		\item[(ii)] $$
		\forall\, \epsilon>0\ \ \exists\, \delta>0\quad \sup_{m\in\mathbb{N}} \int_{\Omega}\left(|f_m(x)|-\frac{1}{\sqrt{\delta}}\right)_+\, dx \leq \epsilon.
		$$
		\item[(iii)] There exists a continuous, concave, and nondecreasing function $\omega:[0,\infty) \to [0,\infty)$ such that $\omega(0)=0$ and for every measurable set $E\subset \Omega$ and for every $m\in \mathbb{N}$ there holds
		$$
		\int_{E}|f_m(x)|\, dx \leq \omega(|E|).
		$$ 
		\item[(iv)] There exists a function $\Phi:[0,\infty)\to [0,\infty)$ which is convex, $\Phi(0) = 0$, $\lim_{s\to \infty}\frac{\Phi(s)}{s} = \infty$, and
		$$
		\sup_{n\in \mathbb{N}} \int_{\Omega}\Phi(|f_n(x)|)\, dx < \infty.
		$$
		\item[(v)] The set $\{ f_n\}_{n=1}^\infty$ is relatively compact (or, equivalently, relatively sequentially compact) in the weak topology of $L^1(\Omega)$. 
	\end{itemize}
	
	\begin{remark}
		The fact that condition (iv) of the above definition is equivalent to the other three is known as the de la Vall\'{e}e Poussin theorem. The equivalence of assertion (v) to the remaining ones is known as the Dunford--Pettis theorem.  
	\end{remark}
\end{definition}
Young measures are now a standard tool of nonlinear analysis, we refer for example to \cite{pedregal} for a comprehensive exposition of their theory. We will need the following version of the generalized fundamental theorem on Young measures from \cite{Gwiazda_zatorska}, where by $\mathcal{M}(\R^N)$ we denote the space of bounded Radon measures.  

\begin{proposition}	(cf. \cite[Theorem 4.1]{Gwiazda_zatorska}) \label{fundamental}
	Let $\Omega$ be an open and bounded subset of $\mathbb{R}^d.$ Assume that the sequence $\{ \nu^j \}_{j=1}^\infty \subset L_{w}^{\infty}(\Omega,  \mathcal{M}(\R^N))$ of weakly-* measurable mappings is such that  $\nu^j(x)=\nu^j_x$ is a~probability measure for almost every $x\in\Omega$. 
	If the sequence $\nu^j$ satisfies the tightness condition:
	$$
	\lim_{M\to \infty} \sup_j |\{x\in \Omega\:\ \mathrm{supp}(\nu^j_x)\setminus B(0,M)\neq \emptyset\}| \to 0,
	$$
	then the following assertions are true.
	\begin{itemize}
		\item [(1)] There exists a weakly-* measurable mapping $\nu\in L^\infty_w(\Omega,\mathcal{M}(\R^N))$ such that, for a subsequence still denoted by $j$, there holds 
		$$
		\nu^j\to\nu\quad \mathrm{weakly}-*\ \mathrm{in} \quad L_w^{\infty}(\Omega,  \mathcal{M}(\R^N)),
		$$
		\item[(2)]$\|\nu_x\|_{\mathcal{M}(\R^N)}=1$ a.e in $\Omega.$ Moreover, for every $f\in L^{\infty}(\Omega,C_b(\mathbb{R}^N))$ there holds
		$$
		\int_{\mathbb{R}^N}f(x,\lambda)d\nu_x^j(\lambda)\to\int_{\mathbb{R}^N}f(x,\lambda)d\nu_x(\lambda) ~~\mathrm{weakly}-*\ \mathrm{in}~~L^\infty(\Omega).
		$$

		\item [(3)] For every measurable subset $A\subset\Omega$ and for every Carath\'eodory function $f$ (measurable in the first, and continuous in the second variable) such that
		$$
		\lim_{R\to0}\sup_{j\in \mathbb{N}}\int_A\int_{\{\lambda\in \mathbb{R}^N\, :\  |f(x,\lambda)| > R\}} |f(x,\lambda)|d\nu^j_x(\lambda)dx=0,
		$$
there holds
		$$
		\int_{\mathbb{R}^N}f(x,\lambda)d\nu_x^j(\lambda)\to\int_{\mathbb{R}^N}f(x,\lambda)d\nu_x(\lambda) ~~\mathrm{weakly}\ \  \mathrm{in}~~L^1(A)
		$$
	\end{itemize}
\end{proposition}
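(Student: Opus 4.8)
The plan is to carry out the classical argument behind the fundamental theorem on Young measures, with the tightness hypothesis playing the role that an $L^p$-bound plays in the versions without escape of mass at infinity. First I would record the duality identification $L^\infty_w(\Omega,\M(\R^N))=\big(L^1(\Omega,C_0(\R^N))\big)^*$, realized by the pairing $\langle\nu,g\rangle=\int_\Omega\int_{\R^N}g(x,\lambda)\,d\nu_x(\lambda)\,dx$, cf. \cite[Section 2]{Ball_fundamental}. Since $\Omega$ has finite measure and $C_0(\R^N)$ is separable, the predual $L^1(\Omega,C_0(\R^N))$ is separable, so bounded sequences in its dual have weakly-$*$ convergent subsequences. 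As each $\nu^j_x$ is a probability measure we have $\|\nu^j\|_{L^\infty_w}=1$, and (1) follows: along a subsequence $\nu^j$ converges weakly-$*$ to some $\nu\in L^\infty_w(\Omega,\M(\R^N))$.

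For (2), testing the weak-$*$ convergence against nonnegative $g\in L^1(\Omega,C_0(\R^N))$ shows $\nu_x\ge0$ a.e., while weak-$*$ lower semicontinuity of the dual norm gives $\|\nu_x\|_{\M(\R^N)}\le1$ a.e. To upgrade this to equality — the point at which tightness is indispensable — I would fix cutoffs $\psi_R\in C_0(\R^N)$ with $0\le\psi_R\le1$ and $\psi_R\equiv1$ on $B(0,R)$, and any nonnegative $\varphi\in L^1(\Omega)$; then $\int_\Omega\varphi\big(1-\int\psi_R\,d\nu^j_x\big)\,dx\le\int_{\{x:\,\operatorname{supp}\nu^j_x\not\subset B(0,R)\}}\varphi\,dx$, which by tightness and the absolute continuity of $\int\varphi$ tends to $0$ as $R\to\infty$, uniformly in $j$. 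Passing to the limit first in $j$ (using $\psi_R\varphi\in L^1(\Omega,C_0(\R^N))$) and then in $R$ (monotone convergence, as $\nu_x\ge0$ and $\psi_R\uparrow1$) forces $\int_\Omega\varphi\,\|\nu_x\|\,dx=\int_\Omega\varphi\,dx$, hence $\|\nu_x\|=1$ a.e. For the weak-$*$ convergence of $x\mapsto\int f\,d\nu^j_x$ with $f\in L^\infty(\Omega,C_b(\R^N))$ I would split $f=f\psi_R+f(1-\psi_R)$: the first summand lies in $L^\infty(\Omega,C_0(\R^N))\subset L^1(\Omega,C_0(\R^N))$ and is covered by (1), while for any $\varphi\in L^1(\Omega)$ the contribution of the second is bounded by $\|f\|_\infty\int_{\{x:\operatorname{supp}\nu^j_x\not\subset B(0,R)\}}|\varphi|\,dx$, and likewise for $\nu$; an $\varepsilon/3$ argument with $R$ large then gives the claim.

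For (3), write $F^j(x):=\int_{\R^N}f(x,\lambda)\,d\nu^j_x(\lambda)$, $G(x):=\int_{\R^N}f(x,\lambda)\,d\nu_x(\lambda)$, and $\omega(R):=\sup_j\int_A\int_{\{|f|>R\}}|f|\,d\nu^j_x\,dx$, which is small for $R$ large by hypothesis. Splitting at height $R$ gives $\int_E|F^j|\,dx\le R|E|+\omega(R)$ for every measurable $E\subset A$, so $\{F^j\}$ is uniformly integrable on $A$ (hence each $F^j\in L^1(A)$ and, by Dunford--Pettis, $\{F^j\}$ is weakly sequentially precompact in $L^1(A)$); using $|f|\chi_{\{|f|>R\}}\le2(|f|-\tfrac R2)_+$, the truncations $(|f|-\tfrac R2)_+\wedge M\in L^\infty(\Omega,C_b(\R^N))$, part (2) and monotone convergence one checks similarly that $\int_A\int_{\{|f|>R\}}|f|\,d\nu_x\,dx$ is small for $R$ large, so $G\in L^1(A)$. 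To identify the limit, given $\varepsilon>0$ I would approximate $f$ by the bounded Carath\'eodory function $h:=T_{R_1}(f)\,\psi_{R_2}$; from the pointwise bound $|f-h|\le3|f|\chi_{\{|f|>R_1\}}+2R_1\chi_{\{|\lambda|>R_2\}}$ one gets $\sup_j\int_A\int|f-h|\,d\nu^j_x\,dx\le3\omega(R_1)+2R_1\sup_j\big|\{x:\operatorname{supp}\nu^j_x\not\subset B(0,R_2)\}\big|$, and analogously for $\nu$; choosing $R_1$ large first and then $R_2$ large (via tightness, for that fixed $R_1$) makes both quantities $<\varepsilon$. Since $h\in L^1(\Omega,C_0(\R^N))$, part (1) gives $\int_A\varphi\int h\,d\nu^j_x\,dx\to\int_A\varphi\int h\,d\nu_x\,dx$ for every $\varphi\in L^\infty(A)$, whence $\limsup_j\big|\int_A\varphi(F^j-G)\,dx\big|\le2\varepsilon\|\varphi\|_\infty$; as $\varepsilon$ is arbitrary, $F^j\rightharpoonup G$ weakly in $L^1(A)$.

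The step I expect to be most delicate is the control of the $\lambda$-tail in part (3): the natural truncation $h=T_{R_1}(f)\psi_{R_2}$ leaves a remainder comparable to $R_1\chi_{\{|\lambda|>R_2\}}$, whose $\nu^j$-average carries the factor $R_1$, so one must first spend the equiintegrability hypothesis on the value-truncation (fixing $R_1$) and only afterwards invoke tightness to absorb that factor through the smallness of $\{x:\operatorname{supp}\nu^j_x\not\subset B(0,R_2)\}$, uniformly in $j$ — the order of the two choices is essential. The only other place where genuine care is needed is the equality $\|\nu_x\|=1$ in (2): without tightness mass could leak to infinity and one would obtain merely $\|\nu_x\|\le1$.
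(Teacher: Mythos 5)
The paper does not supply its own proof of Proposition \ref{fundamental}; it is quoted from \cite[Theorem 4.1]{Gwiazda_zatorska}, so there is no in-paper argument to compare against. Your reconstruction is correct and follows the standard route (the abstract framework of \cite{Ball_fundamental} adapted, as in \cite{Gwiazda_zatorska}, to sequences of parametrized measures rather than sequences of functions): Banach--Alaoglu in $L^\infty_w(\Omega,\mathcal{M}(\R^N))=\bigl(L^1(\Omega,C_0(\R^N))\bigr)^*$ for (1); cutoffs $\psi_R$ together with tightness to forbid escape of mass at infinity for (2); and the two-parameter truncation $h=T_{R_1}(f)\psi_{R_2}$ combined with the uniform integrability of the $\nu^j$-averages for (3). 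Your observation that $R_1$ must be fixed before choosing $R_2$ (so that the factor $R_1$ appearing in the spatial tail can be absorbed by tightness) correctly isolates the only delicate point, and you order the limits accordingly.

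Two small remarks. First, the paper's ``$\lim_{R\to 0}$'' in the hypothesis of (3) is a typo for ``$\lim_{R\to\infty}$''; with $R\to 0$ the condition is vacuous, and you have implicitly, and correctly, read it as an equiintegrability condition at large $R$. Second, in estimating the remainder $\int_A\int R_1\chi_{\{|\lambda|>R_2\}}\,d\nu_x\,dx$ for the limit measure it is cleaner to replace the indicator by the continuous cutoff $1-\psi_{R_2}$ exactly as you already do in part (2): tightness is a statement about the supports of the $\nu^j_x$ and does not transfer directly to $\nu_x$, but the smallness of $\int_A\bigl(1-\int\psi_{R_2}\,d\nu_x\bigr)\,dx$ is inherited from the sequence through the weak-$*$ limit, which is precisely what your part (2) argument establishes.
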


The following corollary is the generalization of the result on the  lower-semicontinuity of Young measure generated by a sequence of functions, cf. \cite[Corollary 3.3]{muller}, to the case when the measure is generated by a sequence of measures. 
\begin{corollary}\label{lower}
	Let $ \Omega\subset\mathbb{R}^d$ be an open and bounded subset of $\mathbb{R}^d$. Suppose that  
	$$\{ \nu^j \}_{j=1}^\infty \subset L_{w}^{\infty}(\Omega,  \mathcal{M}(\R^N))\quad \mathrm{and}\quad \nu\in L^\infty_w(\Omega, \mathcal{M}(\R^N))$$ are weakly-* measurable mappings such that $\nu^j_x$ and $\nu_x$ are probability measures for a.e. $x\in \Omega$. Moreover assume that 
	$$
	\nu^{j} \to \nu\quad \mathrm{weakly}-*\ \ \mathrm{in}\quad L^\infty_w(\Omega,\mathcal{M}(\R^N)).
	$$
	Then for any measurable set $ E\subset\Omega$ and every Carath\'eodory function such that there exists $m\in L^1(\Omega), $ with  $m\geq 0$ and $f(x,\lambda)> - m(x)$ for almost every $x\in \Omega$ and every $\lambda\in \bigcup_{j=1}^\infty \mathrm{supp}\, \nu_x^j$ , there holds
	$$
	\int_{E}\int_{\mathbb {R}^N}f(x,\lambda)d\nu_x(\lambda)\leq\liminf_{j\to \infty} \int_{E}\int_{\mathbb {R}^N}f(x,\lambda)d\nu_x^j(\lambda)
	$$
\end{corollary}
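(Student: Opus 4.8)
The plan is to reduce the claim to the classical lower-semicontinuity statement for Young measures (the version recalled in \cite[Corollary 3.3]{muller}) by a truncation-from-below argument that exploits the integrable lower bound $f(x,\lambda) > -m(x)$. First I would introduce, for each $n\in\N$, the truncated function $f_n(x,\lambda) := \max\{f(x,\lambda), -n\,m(x) - n\}$, or equivalently $f_n(x,\lambda) := f(x,\lambda) + (f(x,\lambda))_{-}\wedge(\text{something})$; the precise form is not important, only that $f_n$ is Carath\'eodory, $f_n \geq -m(x) - 1$ uniformly, $f_n$ is bounded below by a fixed $L^1$ function, $f_n \downarrow$ (or rather $f_n$ increases pointwise if we truncate from above — here we truncate the part that could blow up to $+\infty$, so I would instead set $f_n := \min\{f, n\}$, which is bounded, Carath\'eodory, bounded below by $-m(x)$, and $f_n \uparrow f$ pointwise by the monotone convergence structure). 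The key point is that each $f_n$ is bounded above and below by $L^1(\Omega)$ functions in the $\lambda$-uniform sense, so the equiintegrability hypothesis in Proposition \ref{fundamental}(3) is trivially satisfied for $f_n + m$ (which is nonnegative and bounded).

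Next I would apply Proposition \ref{fundamental}(2) together with the nonnegativity of $f_n + m$ to obtain, for each fixed $n$, the weak convergence in $L^1(E)$ (or at least weak-$*$ testing against $\chi_E \in L^\infty$) of $x\mapsto \int_{\R^N} f_n(x,\lambda)\,d\nu^j_x(\lambda)$ to $x\mapsto \int_{\R^N} f_n(x,\lambda)\,d\nu_x(\lambda)$; since $f_n$ is bounded this is immediate from part (2) applied to $f_n \in L^\infty(\Omega; C_b(\R^N))$. In particular
\[
\int_E \int_{\R^N} f_n(x,\lambda)\,d\nu_x(\lambda)\,dx = \lim_{j\to\infty}\int_E \int_{\R^N} f_n(x,\lambda)\,d\nu^j_x(\lambda)\,dx.
\]
Since $f_n \leq f$ pointwise and $\nu^j_x$ is a probability measure, the right-hand side is $\leq \liminf_{j\to\infty}\int_E\int_{\R^N} f(x,\lambda)\,d\nu^j_x(\lambda)\,dx$. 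Thus for every $n$,
\[
\int_E \int_{\R^N} f_n(x,\lambda)\,d\nu_x(\lambda)\,dx \leq \liminf_{j\to\infty}\int_E\int_{\R^N} f(x,\lambda)\,d\nu^j_x(\lambda)\,dx.
\]

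Finally I would let $n\to\infty$ on the left-hand side. Here $f_n(x,\cdot) \uparrow f(x,\cdot)$ pointwise on $\bigcup_j \mathrm{supp}\,\nu^j_x \supseteq \mathrm{supp}\,\nu_x$ (the latter inclusion holds because $\nu^j \to \nu$ weakly-$*$, so the support of the limit is contained in the closure of the union of the supports — this is the one technical point I would need to check carefully, using the weak-$*$ convergence of the measures and a standard argument that a point outside the closed union of supports has a neighborhood on which all $\nu^j$ vanish, hence $\nu$ vanishes there too), and $f_n + m \geq 0$. By the monotone convergence theorem applied to $f_n + m$ in the inner integral and then (again monotone convergence, or just the fact that $m\in L^1$) in the outer integral over $E$, the left-hand side converges to $\int_E\int_{\R^N} f(x,\lambda)\,d\nu_x(\lambda)\,dx$, which may be $+\infty$ but the inequality is preserved. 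This yields the claimed inequality. The main obstacle is the support inclusion $\mathrm{supp}\,\nu_x \subseteq \overline{\bigcup_j \mathrm{supp}\,\nu^j_x}$ for a.e.\ $x$, which ensures $f_n \uparrow f$ $\nu_x$-a.e.; everything else is a routine combination of the boundedness of truncations, Proposition \ref{fundamental}(2), and monotone convergence.
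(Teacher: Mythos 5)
Your strategy---truncating $f$ in value so as to land in $L^\infty(\Omega; C_b(\R^N))$, applying Proposition~\ref{fundamental}(2), and then undoing the truncation by monotone convergence---is a reasonable and genuinely different route from the paper's, which instead truncates the $\lambda$-support via cutoffs $\eta_l\in C^\infty_0(\R^N)$ and proves the limit for the truncated integrands directly through the Scorza--Dragoni theorem and the duality $(L^1(E,C_0(\R^N)))' = L^\infty_w(\Omega,\mathcal{M}(\R^N))$. However, your proposal has a genuine gap at the pivotal step: with $f_n := \min\{f,n\}$ you claim $f_n\in L^\infty(\Omega;C_b(\R^N))$, but $f_n$ is bounded only from above. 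The available lower bound $f(x,\lambda)>-m(x)$ is asserted only for $\lambda\in\bigcup_j\mathrm{supp}\,\nu^j_x$ and, more importantly, it is $x$-dependent through $m$, which is merely in $L^1(\Omega)$. Thus $\esssup_{x\in\Omega}\|f_n(x,\cdot)\|_{C_b(\R^N)}$ need not be finite, $f_n$ need not belong to $L^\infty(\Omega;C_b(\R^N))$, and Proposition~\ref{fundamental}(2) cannot be invoked.

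The argument can be repaired by replacing $f_n$ with a two-sided truncation of the shifted integrand $g := f+m$, for instance $g_n := \min\{\max\{g,0\},n\}$. Then $g_n$ is Carath\'eodory with $0\leq g_n\leq n$, so $g_n\in L^\infty(\Omega;C_b(\R^N))$ and Proposition~\ref{fundamental}(2) applies; $g_n\leq g$ on $\mathrm{supp}\,\nu^j_x$, since $g>0$ there, so $\int_E\int g_n\,d\nu^j_x\,dx\leq\int_E\int g\,d\nu^j_x\,dx$ for every $j$; and $g_n\uparrow\max\{g,0\}\geq g$ everywhere, so the monotone convergence step gives $\sup_n\int_E\int g_n\,d\nu_x\,dx \geq \int_E\int g\,d\nu_x\,dx$, which is precisely the inequality direction you need. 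Since $\nu_x$ and $\nu^j_x$ are probability measures and $m\in L^1(\Omega)$, subtracting the finite quantity $\int_E m\,dx$ on both sides returns the statement for $f$. Note that once $g$ is bounded from below by $0$ globally, the support inclusion $\mathrm{supp}\,\nu_x\subseteq\overline{\bigcup_j\mathrm{supp}\,\nu^j_x}$ that you flag as the delicate point is no longer needed at all. That is fortunate, because the argument you sketch for it tacitly assumes pointwise-in-$x$ weak-* convergence $\nu^j_x\rightharpoonup^*\nu_x$, which does not follow from weak-* convergence in $L^\infty_w(\Omega,\mathcal{M}(\R^N))$ (the latter only tests against $L^1(\Omega;C_0(\R^N))$), and would further face measurability difficulties since $\bigcup_j\mathrm{supp}\,\nu^j_x$ varies with $x$.
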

\begin{proof}
	The proof follows the lines of the proof of Corollary 3.3 in \cite{muller}. First assume that there exist $R>0$ such that $f(x,\lambda)=0$ for $ |\lambda|\geq R$. 
	By the Scorza--Dragoni theorem there exists an increasing sequence of compact sets $E_k$ such that  $|E \setminus E_k| \to 0$ as $k\to \infty$ and  $f|_{E_k\times\mathbb{R^N}}$ is continuous.
	Define $F_k: E\to C_0(\mathbb{R}^N)$ as $F_k(x)= \chi_{E_k}(x)f(x,\cdot).$ We observe that $F_k \in L^1(E,C_0(\mathbb{R}^N))$. Indeed, 
\begin{align*}
	&\int_{E}\|F_k(x)\|_{C_0(\mathbb{R}^N)}dx=\int_{E}\sup_{\lambda\in\mathbb{R}^N}|F_k(x,\lambda)|dx=\int_{E_k}\sup_{\lambda\in\mathbb{R}^N}|f(x,\lambda)|dx \\
&\qquad 	\leq|\Omega| \sup_{(x,\lambda)\in E_k\times\mathbb{R}^N}|f(x,\lambda)|= |\Omega| \sup_{(x,\lambda)\in E_k\times\overline{B(-R,R)}}|f(x,\lambda)| < \infty.
\end{align*}	
Now, as $(L^1(E,C_0(\mathbb{R}^N)))' = L^\infty_w(\Omega,\mathcal{M}(\R^N))$, there holds 
	$$
	\lim_{j\to\infty}\int_{E}\int_{\mathbb {R}^N}F_k(x,\lambda)d\nu_x^{j}(\lambda)dx=  \int_{E}\int_{\mathbb {R^N}}F_k(x,\lambda)d\nu_x(\lambda)dx, 
	$$
	
	$$
	\lim_{j\to\infty} \int_{E_k}\int_{\mathbb {R}^N}(f(x,\lambda)+m(x)-m(x))d\nu_x^{j}(\lambda)dx=\int_{E_k}\int_{\mathbb {R^N}}(f(x,\lambda)+m(x)-m(x))d\nu_x(\lambda)dx.
	$$
	As $\nu^j_x$ and $\nu$ are probability measures, it follows that 
		$$
	\lim_{j\to\infty} \int_{E_k}\int_{\mathbb {R}^N}(f(x,\lambda)+m(x))d\nu_x^{j}(\lambda)dx=\int_{E_k}\int_{\mathbb {R^N}}(f(x,\lambda)+m(x))d\nu_x(\lambda)dx.
	$$
	It follows that
		$$
	\int_{E}\int_{\mathbb {R^N}}\chi_{E_k}(x)(f(x,\lambda)+m(x))d\nu_x(\lambda)dx \leq \liminf_{j\to\infty} \int_{E}\int_{\mathbb {R}^N}(f(x,\lambda)+m(x))d\nu_x^{j}(\lambda)dx.
	$$	
	Letting $k\to \infty$ we obtain the assertion by the monotone convergence theorem. To remove the assumption that $f(x,\lambda) = 0$ $|\lambda|\geq R$ consider an increasing sequence of nonnegative functions $\eta_l\subset C_0^{\infty}(\mathbb{R}^N)$ that converges pointwise to $1$.  We use the above result for $f(x,\lambda)\eta_l(\lambda)$
	\begin{align*}
		&\int_{E}\int_{\mathbb {R^N}}f(x,\lambda)\eta_l(\lambda) d\nu_x(\lambda)dx\leq\liminf_{j\to\infty}\int_{E}\int_{\mathbb {R^N}}f(x,\lambda)\eta_l(\lambda)d\nu_x^{j}(\lambda)dx\\
		& \qquad \leq\liminf_{j\to \infty}\left(\int_{E}\int_{\mathbb {R^N}}f(x,\lambda)+m(x) d \nu_x^{\epsilon}(\lambda)dx - \int_{E}\int_{\mathbb {R^N}}m(x)\eta^l(\lambda) d\nu_x^{j}(\lambda)dx\right).
	\end{align*}
	But $m(x)\eta^l(\lambda) \in L^1(E,C_0(\mathbb{R}^N))$ and hence
	$$
	\int_{E}\int_{\mathbb {R^N}}(f(x,\lambda)+m(x))\eta_l(\lambda) d\nu_x(\lambda)dx \leq \liminf_{j\to \infty}\int_{E}\int_{\mathbb {R^N}}f(x,\lambda)+m(x) d \nu_x^{j}(\lambda)dx.
	$$
	We can pass  to the limit $l\to\infty$ in the left-hand side by the monotone convergence theorem
		$$
	\int_{E}\int_{\mathbb {R^N}}f(x,\lambda)+m(x) d\nu_x(\lambda)dx \leq \liminf_{j\to \infty}\int_{E}\int_{\mathbb {R^N}}f(x,\lambda)+m(x) d \nu_x^{j}(\lambda)dx,
	$$
	and the assertion follows. 
	\end {proof}

We recall the definition of the biting convergence and the statement of the Chacon biting Lemma.
\begin{definition}
Let $\Omega\subset \R^d$ be a measurable set. We say that a sequence $\{ f_j \}_{j=1}^\infty \subset L^1(\Omega)$ converges to a function $f\in L^1(\Omega)$ in a biting sense (and we write $f_j \stackrel{b}{\to} f$) if there exists a sequence of measurable sets $E_l \subset \Omega$ with $|\Omega\setminus E_l| \to 0$ as $l\to \infty$  and $E_1\subseteq E_2 \subseteq \ldots \subseteq E_l \subseteq\ldots \subseteq \Omega$ such that 
$$
f_j \to f\quad \mathrm{weakly}\ \mathrm{in}\ L^1(E_l)\quad \mathrm{for}\ \mathrm{every} \ l\in \mathbb{N}.
$$ 	
\end{definition}
The proof of the following proposition (known as the Chacon biting lemma) can be found for example in \cite{ball_murat}. 
\begin{proposition}\label{chacon}
	Let $\Omega\subset \R^d$ be a measurable set and let the sequence $\{ f_j \}_{j=1}^\infty \subset L^1(\Omega)$ be bounded in $L^1(\Omega)$. There exists a subsequence of indices, still denoted by $j$, and a function $f\in L^1(\Omega)$ such that $f_j \stackrel{b}{\to} f$.
\end{proposition}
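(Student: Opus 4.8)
This is the classical Chacon (Brooks--Chacon) biting lemma, and the plan is to follow the argument of \cite{ball_murat}: truncation, weak $L^1$ compactness of the truncated parts, and a ``peeling off concentration'' procedure producing the biting sets. Since the conclusion forces $|\Omega\setminus E_l|\to0$, I may assume $|\Omega|<\infty$ (the case relevant here, as $\Omega$ is bounded; the general case follows by a routine exhaustion and diagonal argument). Write $C=\sup_j\|f_j\|_{L^1(\Omega)}<\infty$.

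First I would identify the limit $f$. For each integer $n\geq1$ set $f_j^n:=T_n(f_j)$. For fixed $n$ the family $\{f_j^n\}_j$ is bounded in $L^\infty(\Omega)$, hence uniformly integrable since $|\Omega|<\infty$, so by the Dunford--Pettis theorem and a diagonal extraction over $n$ there is a single subsequence, still written $\{f_j\}$, such that $f_j^n\rightharpoonup g_n$ weakly in $L^1(\Omega)$ for every $n$, for some $g_n\in L^1(\Omega)$. By weak lower semicontinuity of the norm, $\|g_{n+1}-g_n\|_{L^1(\Omega)}\leq\liminf_j\|f_j^{n+1}-f_j^n\|_{L^1(\Omega)}$, and since $\sum_{n\geq1}\|f_j^{n+1}-f_j^n\|_{L^1(\Omega)}\leq\|f_j\|_{L^1(\Omega)}\leq C$ for all $j$, Fatou's lemma for series gives $\sum_{n\geq1}\|g_{n+1}-g_n\|_{L^1(\Omega)}\leq C$. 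Hence $\{g_n\}$ is Cauchy in $L^1(\Omega)$, and I put $f:=\lim_n g_n\in L^1(\Omega)$.

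Next the biting sets. If $\{f_j\}$ is already uniformly integrable, Dunford--Pettis gives (along a subsequence) $f_j\rightharpoonup f$ weakly in $L^1(\Omega)$, and one may take $E_l=\emptyset$. Otherwise, negating the $\varepsilon$--$\delta$ characterisation of uniform integrability and using the absolute continuity of the integral $E\mapsto\int_E|f_j|\,dx$ (which forces the bad indices to tend to infinity), one extracts a subsequence and sets $A_j$ with $\sum_j|A_j|<\infty$ and $\int_{A_j}|f_j|\,dx\geq\varepsilon_0>0$ for a fixed $\varepsilon_0$; replacing $f_j$ by $f_j\chi_{\Omega\setminus A_j}$ lowers the uniform $L^1$ bound by $\varepsilon_0$. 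Iterating this peeling, with the levels $\varepsilon_m$ at stage $m$ chosen comparable to the largest removable concentration (so that the remainder is eventually uniformly integrable), and organising the nested subsequences by a diagonal argument, produces a decreasing family of measurable sets $E_l$ with $|E_l|\to0$ such that for each $l$ the tail of $\{f_j\chi_{\Omega\setminus E_l}\}_j$ is uniformly integrable. For fixed $l$, Dunford--Pettis then gives $f_j\chi_{\Omega\setminus E_l}\rightharpoonup h_l$ weakly in $L^1(\Omega)$ along a further subsequence; writing $f_j\chi_{\Omega\setminus E_l}=f_j^n\chi_{\Omega\setminus E_l}+(f_j-f_j^n)\chi_{\Omega\setminus E_l}$ and letting $j\to\infty$ for fixed $n$, the quantity $h_l-g_n\chi_{\Omega\setminus E_l}$ is the weak limit of $(f_j-f_j^n)\chi_{\Omega\setminus E_l}$, so $\|h_l-g_n\chi_{\Omega\setminus E_l}\|_{L^1(\Omega)}\leq\sup_j\int_{(\Omega\setminus E_l)\cap\{|f_j|>n\}}|f_j|\,dx\to0$ as $n\to\infty$ by uniform integrability on $\Omega\setminus E_l$; since also $g_n\to f$ in $L^1$, this forces $h_l=f\chi_{\Omega\setminus E_l}$, i.e.\ $f_j\rightharpoonup f$ weakly in $L^1(\Omega\setminus E_l)$. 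A final diagonal extraction over $l$ yields one subsequence valid for all $l$, which is precisely $f_j\stackrel{b}{\to}f$.

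The main obstacle is the middle part: producing, along a single subsequence, a nested family of sets of vanishing measure whose removal restores uniform integrability. The point is that the natural concentration sets $\{|f_j|>k\}$ depend on $j$, so they cannot simply be deleted; one needs either the iterative peeling above or, equivalently, a Helly-type selection applied to the nonincreasing functions $t\mapsto\int_{\{|f_j|>t\}}|f_j|\,dx$ followed by a careful choice of truncation levels, together with the bookkeeping needed to keep the extracted subsequence common to all $l$.
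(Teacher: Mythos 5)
The paper does not give its own proof of this proposition; it is the classical Chacon biting lemma, and the paper simply cites Ball and Murat \cite{ball_murat}. Your sketch follows the standard line of argument from that source: truncate, apply Dunford--Pettis to the bounded truncations $f_j^n$, show the weak limits $g_n$ are $L^1$-Cauchy via the telescoping estimate $\sum_n\|g_{n+1}-g_n\|_{L^1}\leq\liminf_j\sum_n\|f_j^{n+1}-f_j^n\|_{L^1}\leq C$, peel off the $L^1$-concentration to build the biting sets, and identify the biting limit with $f$ through the truncations. The parts you carry out in detail (the Cauchy estimate for $\{g_n\}$ and the identification $h_l=f\chi_{\Omega\setminus E_l}$ via $\|h_l-g_n\chi_{\Omega\setminus E_l}\|_{L^1}\leq\sup_j\int_{(\Omega\setminus E_l)\cap\{|f_j|>n\}}|f_j|$) are correct. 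The middle step is, as you yourself flag, only sketched, and it is where the real work lies: one must ensure the removed measure stays summable across peeling stages, that the concentration levels $\varepsilon_m$ tend to $0$ so the remainder is eventually uniformly integrable, and that all of this survives a single diagonal subsequence valid for every $l$. That bookkeeping would need to be spelled out in a complete proof, but your outline is faithful to the argument the paper points to, and the reductions (to $|\Omega|<\infty$, to the non-uniformly-integrable case) are handled correctly.
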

We will need the following result which states when the sequence which converges in the biting sense is convergent weakly in $L^1(\Omega)$, cf. \cite[Lemma 4.6]{gwiazda_wittbold2}.
\begin{proposition}\label{biting_weak}
	Let the sequence $\{ a_j \}_{j=1}^\infty \subset L^1(\Omega)$ and let $0\leq a_0\in L^1(\Omega)$ be such that $a_j(x) \geq -a_0(x)$ for almost every $x\in \Omega$. If 
	$$
	a_n \stackrel{b}{\to} a \quad \mathrm{and} \quad \limsup_{j\to \infty}\int_{\Omega}a_j\, dx \leq \int_{\Omega} a\, dx,
	$$ 
	then 
	$$
	a_j \to a\quad \mathrm{weakly} \ \mathrm{in}\quad L^1(\Omega). 
	$$
\end{proposition}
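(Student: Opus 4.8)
The plan is to reduce to the case of nonnegative functions, to upgrade the biting convergence to global equiintegrability by exploiting the $\limsup$ hypothesis, and then to invoke the Dunford--Pettis theorem together with a routine identification of the limit.

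First I would set $b_j = a_j + a_0$ and $b = a + a_0$. Since $a_0 \in L^1(\Omega)$ is fixed, the sets $E_l$ witnessing $a_j \stackrel{b}{\to} a$ also witness $b_j \stackrel{b}{\to} b$ (adding the fixed $L^1$ function $a_0$ does not affect weak $L^1(E_l)$ convergence), we have $b_j \geq 0$ a.e., $b \in L^1(\Omega)$, and $\limsup_j \int_\Omega b_j = \limsup_j \int_\Omega a_j + \int_\Omega a_0 \leq \int_\Omega a + \int_\Omega a_0 = \int_\Omega b$; finally $a_j \to a$ weakly in $L^1(\Omega)$ is equivalent to $b_j \to b$ weakly in $L^1(\Omega)$. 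Hence it suffices to treat the case $a_j \geq 0$ a.e., $a \in L^1(\Omega)$, $a \geq 0$ a.e. As a preliminary remark, from $\limsup_j \int_\Omega a_j \leq \int_\Omega a < \infty$ and $a_j \geq 0$ one gets $\sup_j \int_\Omega a_j = \sup_j \|a_j\|_{L^1(\Omega)} < \infty$.

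The heart of the matter is the tail estimate. Fix $l$ and split $\int_\Omega a_j = \int_{E_l} a_j + \int_{\Omega \setminus E_l} a_j$. Since $a_j \to a$ weakly in $L^1(E_l)$ (test against $\chi_{E_l}\in L^\infty(\Omega)$), we have $\int_{E_l} a_j \to \int_{E_l} a$, so
$$
\limsup_{j\to\infty} \int_{\Omega\setminus E_l} a_j \;\le\; \limsup_{j\to\infty}\int_\Omega a_j \;-\; \int_{E_l} a \;\le\; \int_\Omega a \;-\; \int_{E_l} a \;=\; \int_{\Omega\setminus E_l} a .
$$
Because $a \in L^1(\Omega)$ and $|\Omega \setminus E_l| \to 0$, absolute continuity of the integral gives $\int_{\Omega\setminus E_l} a \to 0$ as $l\to\infty$. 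Combining this with the Dunford--Pettis theorem applied on each $E_l$ — weak $L^1(E_l)$ convergence of $\{a_j\}$ makes $\{a_j|_{E_l}\}$ equiintegrable on $E_l$ — and with the trivial equiintegrability of any finite subfamily, I claim $\{a_j\}$ is equiintegrable on all of $\Omega$: given $\varepsilon>0$, pick $l$ with $\int_{\Omega\setminus E_l} a < \varepsilon/4$, then $N$ with $\int_{\Omega\setminus E_l} a_j < \varepsilon/4$ for $j\ge N$; next choose $\delta>0$ so small that $|B|<\delta$ forces $\int_{B\cap E_l} a_j < \varepsilon/4$ for all $j$ (equiintegrability on $E_l$ for the tail indices, absolute continuity of the integral for the finitely many initial ones) and also $\int_B a_j < \varepsilon$ for $j < N$. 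Then $|B|<\delta$ yields $\int_B a_j < \varepsilon$ for every $j$. Since $\Omega$ has finite measure, this uniform absolute continuity together with the $L^1$ bound above gives relative weak compactness of $\{a_j\}$ in $L^1(\Omega)$. Identifying the limit is then routine: any subsequence converging weakly in $L^1(\Omega)$ to some $c\in L^1(\Omega)$ converges weakly in $L^1(E_l)$ to $c$ for every $l$, but also to $a$, so $c = a$ a.e.\ on each $E_l$, hence $c = a$ a.e.\ on $\Omega$; thus the full sequence converges weakly in $L^1(\Omega)$ to $a$.

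The main obstacle — ultimately a bookkeeping issue rather than a genuine difficulty — is that the $\limsup$ hypothesis only controls the tails $\int_{\Omega\setminus E_l} a_j$ for $j$ large (for each fixed $l$), so one must carefully interleave this bound with the equiintegrability on $E_l$ and with the finitely many exceptional indices in order to produce a single $\delta$ valid for all $j$ in the equiintegrability criterion; everything else is the standard Dunford--Pettis / subsequence argument.
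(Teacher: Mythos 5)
The paper gives no proof of Proposition \ref{biting_weak}; it merely cites \cite[Lemma 4.6]{gwiazda_wittbold2}. Your argument is correct and self-contained, and it follows what is essentially the canonical route for this kind of statement: reduce to $a_j\ge 0$ by adding $a_0$; observe that on each biting set $E_l$ the restrictions $\{a_j|_{E_l}\}$ are equiintegrable (weak $L^1(E_l)$ convergence $\Rightarrow$ relative weak compactness $\Rightarrow$ Dunford--Pettis); use the hypothesis $\limsup_j\int_\Omega a_j\le\int_\Omega a$ together with $\int_{E_l}a_j\to\int_{E_l}a$ to get $\limsup_j\int_{\Omega\setminus E_l}a_j\le\int_{\Omega\setminus E_l}a$, which tends to $0$ as $l\to\infty$; assemble these into uniform equiintegrability on $\Omega$; invoke Dunford--Pettis on $\Omega$ and identify the weak $L^1$ limit of any subsequence with $a$ using the fact that $\bigcup_l E_l$ exhausts $\Omega$ up to a null set. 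Each step is justified, including the slightly delicate bookkeeping where the tail bound $\int_{\Omega\setminus E_l}a_j<\varepsilon/4$ is only available for $j\ge N$ and the finitely many initial indices are handled by absolute continuity of the integral; this is exactly where care is needed, and you have it right. No gaps.
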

We also make use of the following well known result
\begin{proposition}\label{convergence_lemma}
	Assume that $\Omega\subset \mathbb{R}^d$ is a bounded set. Let the sequence $f_j\to f$ weakly in $L^1(\Omega)$, and let $g_j, g\in L^\infty(\Omega)$ be such that $\|g_j\|_{L^\infty(\Omega)} \leq C$ and $\|g\|_{L^\infty(\Omega)} \leq C$, where the constant $C$ is independent of $j$ and for almost every $x\in \Omega$ there holds the pointwise convergence $g_j(x)\to g(x)$. Then
	$$
	\lim_{j\to\infty} \int_{\Omega}f_j g_j\, dx = \int_{\Omega}f g\, dx
	$$
\end{proposition}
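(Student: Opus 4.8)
The plan is to reduce the statement to the elementary weak--strong pairing by writing
$$
\int_{\Omega} f_j g_j\, dx - \int_{\Omega} f g\, dx = \int_{\Omega} (f_j - f)\, g\, dx + \int_{\Omega} f_j\,(g_j - g)\, dx .
$$
The first term on the right tends to $0$ immediately: $g \in L^\infty(\Omega) \subset (L^1(\Omega))^\ast$, so by the very definition of weak convergence $f_j \to f$ in $L^1(\Omega)$ one has $\int_\Omega (f_j-f)g\,dx \to 0$. Everything therefore comes down to controlling the second term, and for that I would exploit the fact that a weakly convergent sequence in $L^1(\Omega)$ is relatively weakly compact, hence, by the Dunford--Pettis theorem (the equivalence of (v) and (iii) in the definition of uniform integrability recalled in this appendix), uniformly integrable: there is a continuous, concave, nondecreasing $\omega:[0,\infty)\to[0,\infty)$ with $\omega(0)=0$ such that $\int_E |f_j|\,dx \le \omega(|E|)$ for every measurable $E\subset\Omega$ and every $j$; in particular $\sup_j \|f_j\|_{L^1(\Omega)} \le \omega(|\Omega|) < \infty$.

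Next I would fix $\eta>0$ and apply Egorov's theorem --- legitimate since $|\Omega|<\infty$ and $g_j\to g$ a.e.\ on $\Omega$ --- to obtain a measurable set $\Omega_\eta\subset\Omega$ with $|\Omega\setminus\Omega_\eta| < \eta$ on which $g_j\to g$ uniformly. Splitting the second integral over $\Omega_\eta$ and its complement and using $\|g_j\|_{L^\infty(\Omega)},\|g\|_{L^\infty(\Omega)}\le C$ gives
$$
\left|\int_{\Omega} f_j\,(g_j - g)\, dx\right| \le \Big(\sup_{\Omega_\eta} |g_j-g|\Big)\,\omega(|\Omega|) + 2C \int_{\Omega\setminus\Omega_\eta} |f_j|\, dx \le \Big(\sup_{\Omega_\eta} |g_j-g|\Big)\,\omega(|\Omega|) + 2C\,\omega(\eta).
$$
Passing to the limit $j\to\infty$, the first summand disappears by uniform convergence on $\Omega_\eta$, so $\limsup_{j\to\infty}\big|\int_\Omega f_j(g_j-g)\,dx\big| \le 2C\,\omega(\eta)$. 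Since $\eta>0$ is arbitrary and $\omega(\eta)\to 0$ as $\eta\to 0^+$, this $\limsup$ equals $0$, and together with the first paragraph the claim follows.

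I do not expect a genuine obstacle here: the statement is classical. The one point that needs care is that $\{f_j\}$ converges only weakly, so one cannot simply extract an a.e.\ convergent subsequence of the $f_j$ themselves; the Dunford--Pettis equi-integrability is precisely the substitute, controlling the $L^1$-mass of $f_j$ over the small ``bad'' set $\Omega\setminus\Omega_\eta$ where $g_j-g$ fails to be uniformly small, while Egorov's theorem takes care of the complement. An equivalent variant would invoke characterization (ii) of uniform integrability in place of (iii).
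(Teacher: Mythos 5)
Your argument is correct and complete. Note that the paper itself states Proposition \ref{convergence_lemma} as a ``well known result'' with no proof attached, so there is no in-paper argument to compare against; your decomposition into a weak--strong pairing term plus a ``commutator'' term, with Dunford--Pettis equi-integrability handling the small exceptional set and Egorov handling its complement, is the standard route and is executed correctly.
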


\end{document}